\newcites{supplement}{Supplemental References}
\def\qed{\rule{2mm}{2mm}}
\let\footnote=\endnote
\def\addlegendimage{\csname pgfplots@addlegendimage\endcsname}
\mathchardef\dash="2D
\newtheorem{theorem}{Theorem}[section]
\newtheorem{lemma}{Lemma}[section]
\newtheorem{corollary}{Corollary}[section]
\newtheorem{proposition}{Proposition}[section]
\theoremstyle{assumption}
\newtheorem{assumption}{Assumption}[section]
\theoremstyle{definition}
\newtheorem{example}{Example}[section]
\newtheorem{remark}{Remark}[section]
\begin{document}

\author{
Eric Mbakop\\
Department of Economics\\
University of Calgary\\
\url{eric.mbakop@ucalgary.ca}
\and
Max Tabord-Meehan\\
Department of Economics\\
University of Chicago\\
\url{maxtm@uchicago.edu}
\footnote{We are grateful for advice and encouragement from Ivan Canay, Joel Horowitz, Chuck Manski and Alex Torgovitsky. We would also like to thank Toru Kitagawa, Azeem Shaikh, Alex Tetenov, Stefan Wager, the editor, anonymous referees, and seminar participants at Northwestern University, (continued on next page)
}\\
}
\title{Model Selection for Treatment Choice: \\Penalized Welfare Maximization}

\maketitle
\vspace{-8.8mm}
\begin{abstract}
This paper studies a penalized statistical decision rule for the treatment assignment problem. Consider the setting of a utilitarian policy maker who must use sample data to allocate a binary treatment to members of a population, based on their observable characteristics. We model this problem as a statistical decision problem where the policy maker must choose a subset of the covariate space to assign to treatment, out of a class of potential subsets. We focus on settings in which the policy maker may want to select amongst a \emph{collection} of constrained subset classes: examples include choosing the number of covariates over which to perform best-subset selection, and model selection when approximating a complicated class via a sieve. We adapt and extend results from statistical learning to develop the Penalized Welfare Maximization (PWM) rule. We establish an oracle inequality for the regret of the PWM rule which shows that it is able to perform model selection over the collection of available classes. We then use this oracle inequality to derive relevant bounds on maximum regret for PWM. An important consequence of our results is that we are able to formalize model-selection using a ``hold-out" procedure, where the policy maker would first estimate various policies using half of the data, and then select the policy which performs the best when evaluated on the other half of the data.
\end{abstract}

\noindent KEYWORDS: Treatment Choice, Minimax-Regret, Statistical Learning \\
\noindent JEL classification codes: C01, C14, C44, C52

\clearpage
\section{Introduction} \label{sec:intro}
\footnotetext[0]{(continued from previous page), NASMES 2017, and the Bristol Econometrics Study Group for helpful comments, as well as Nitish Keskar for help in implementing EWM. This research was supported in part through the computational resources and staff contributions provided for the Social Sciences Computing Cluster (SSCC), and the Quest high performance computing facility at Northwestern University. All mistakes are our own.}

This paper develops a new statistical decision rule for the treatment assignment problem. A major goal of treatment evaluation is to provide policy makers with guidance on how to assign individuals to treatment, given experimental or quasi-experimental data. Following the literature inspired by \cite{manski2004} \citep[a partial list in econometrics includes][]{dehejia2005, schlag2007, hirano2009, stoye2009, chamberlain2011, bhat2012, tetenov2012, stoye2012, athey2017, kt2015, armstrong2015, kock2017, rai2018, viviano2019}, we treat the treatment assignment problem as a statistical decision problem of maximizing population welfare. Like many of the above papers, we evaluate our decision rule by its maximum regret. 

The rule we develop, the Penalized Welfare Maximization (PWM) rule, is designed to address situations in which the policy maker can choose amongst a \emph{collection} of constrained classes of allocations. To be concrete, suppose we have two treatments, and we represent assignment into these treatments by partitioning the covariate space into two pieces. We can then think of constraints on assignment as constraints on the allowable subsets that we can consider for the partitions. For example, policy makers may face exogenous constraints on how they can use covariates for legal, ethical, or political reasons. Even in cases where policy makers have leeway in how they assign treatment, plausible modeling assumptions may imply certain restrictions on assignment. \cite{kt2015} develop what they call the Empirical Welfare Maximization (or EWM) rule, whose primary feature is its ability to solve the treatment choice problem when certain exogenous constraints are placed on assignment. \cite{kt2015} focus on deriving bounds on maximum regret of the EWM rule for a \emph{fixed} class of subsets of finite VC dimension  \citep[see][for a definition]{devroye1996}. In this paper, however, we consider settings where the class of allowable subsets is ``large". We approach the problem by approximating our class of allowable allocations by a sequence of subclasses of finite VC dimension. We establish an oracle inequality for the regret of the PWM rule which shows that it behaves as if we knew the ``correct" class to use in the sequence. We then use this result to derive bounds on the maximum regret of the PWM rule in two empirically relevant settings.

The main setting that we consider is one where the class of feasible allocations has infinite VC dimension. In particular, we argue that economic modeling assumptions may sometimes put restrictions on the unconstrained optimum that naturally generate classes of infinite VC dimension. For example, plausible assumptions may only impose shape restrictions on the optimal allocation. To solve the optimal welfare assignment problem in this setting, we approximate these large classes of feasible allocations by sequences of classes of finite VC dimension. The strength of the PWM rule in this setting will then be to provide a data-driven method by which to select an ``appropriate" approximating class. In doing so we will derive bounds on the maximum regret of the PWM rule for a large set of classes of infinite VC dimension. 

We also consider the setting where the class of feasible allocations may have large VC dimension relative to the sample size. This could arise, for example, if the planner has many covariates on which to base assignment. As is shown in \cite{kt2015}, when the constraints placed on assignment are too flexible relative to the sample size available, the EWM rule may suffer from overfitting, which can result in inflated values of regret. By the same mechanism that allows PWM to select an appropriate approximating class in our first application, we can use PWM in order to select amongst simpler subclasses in this setting as well, in a way that improves the performance of the allocation rule in finite samples. 

The PWM rule is heavily inspired by the literature on model selection in classification: see for example the seminal work of \cite{vapnik1974}, as well as \cite{devroye1996}, \cite{kolt2001}, \cite{bartlett2002}, \cite{boucheron2005}, \cite{scott2006},  \cite{bartlett2008}, \cite{kolt2008} among many others. The theoretical contribution of our paper is to modify and extend some of these tools to the setting of treatment choice. In deciding which tools to extend, we have attempted to strike a balance between ease of use for practitioners, theoretical appeal, and performance in simulations. An important consequence of our results is that we are able to formalize model-selection using a ``hold-out" procedure, where the policy maker would first estimate various policies using half of the data, and then select the policy which performs the best when evaluated on the other half of the data. The connection between classification and treatment choice has been explored in various fields, including machine learning, under the label of \emph{policy learning} \citep[see][among others]{zadrozny2003, beygelzimer2009, swaminathan2015, kallus2016}, and in epidemiology under the label of \emph{individualized treatment rules}  \citep[examples include][]{qian2011, zhao2012}. \cite{kt2015} and \cite{athey2017} provide a discussion on the link between these various literatures.

The remainder of the paper is organized as follows. In Section \ref{sec:setup}, we set up the notation and formally define the problem that the policy maker (i.e. social planner) is attempting to solve. In Section \ref{sec:results}, we introduce the PWM rule, present general results about its maximum regret, and explain how our results allow us to study the properties of the ``hold-out" model selection procedure. In Section \ref{sec:application} we derive bounds on maximum regret of the PWM rule when the planner is constrained to what we call \emph{monotone} allocations, and then apply PWM in an application using data from the Job Training Partnership Act (JTPA) study. 

\section{Setup and Notation} \label{sec:setup}
Let $Y_i$ denote the observed outcome of a unit $i$, and let $D_i$ be a binary variable which denotes the treatment received by unit $i$. Let $Y_i(1)$ denote the potential outcome of unit $i$ under treatment $1$ (which we will refer to as ``the treatment''), and let $Y_i(0)$ denote the potential outcome of unit $i$ under treatment $0$ (which we will refer to as ``the control"). The observed outcome for each unit is related to their potential outcomes through the expression:
\begin{equation} \label{eq:outcomeid}
Y_i = Y_i(1)D_i + Y_i(0)(1-D_i)~.
\end{equation}
Let $X_i\in \mathcal{X}\subset \mathbb{R}^{d_x}$ denote a vector of observed covariates for unit $i$. Let $Q$ denote the distribution of $(Y_i(0),Y_i(1),D_i,X_i)$, then we assume that the planner observes a size $n$ random sample
$$(Y_i, D_i, X_i)_{i=1}^n\sim P^n~,$$ where $P$ is jointly determined by $Q$, and the expression in (\ref{eq:outcomeid}). Throughout the paper we will assume unconfoundedness, i.e.
\begin{assumption}(Unconfoundedness)\label{ass:so} The distribution $Q$ satisfies:
$$\Big((Y(1),Y(0)) \perp D\Big) \hspace{1mm} \bigg| X~.$$
\end{assumption}

This assumption asserts that, once we condition on the observable covariates, the treatment is exogenous. This assumption will hold in a randomized controlled trial (RCT), which is our primary application of interest, since the treatment is exogenous by construction. 

The planner's goal is to optimally assign the treatment to the population. The objective function we consider is utilitarian welfare, which is defined by the average of the individual outcomes in the population:
$$E_Q[Y(1) {\bf 1}\{X \in G\} + Y(0) {\bf 1}\{X \notin G\}]~,$$
where $G\subset \mathcal{X}$ represents the set of covariate values of the individuals assigned to treatment. The planner is tasked with choosing a \emph{treatment allocation} $G \subset \mathcal{X}$ using the empirical data. Using Assumption \ref{ass:so}, we can rewrite the welfare criterion as:
$$E_Q[Y(0)] + E_P\Big[\Big(\frac{YD}{e(X)} - \frac{Y(1-D)}{1-e(X)}\Big){\bf 1}\{X \in G\}\Big]~,$$
where $e(X)=E_P[D|X]$ is the propensity score. Since the first term of this expression does not depend on $G$, we define the planner's objective function given a choice of treatment allocation $G$ as:
$$W(G) := E_P\Big[\Big(\frac{YD}{e(X)} - \frac{Y(1-D)}{1-e(X)}\Big){\bf 1}\{X \in G\}\Big]~.$$

Let $\mathcal{G}$ be the class of all feasible treatment allocations. Here, we consider the possibility that the planner may be restricted in what type of allocations she can (or wants to) consider. These restrictions may arise from legal, ethical, or political considerations, or could arise as natural constraints from an economic model. Consider the following three examples of $\mathcal{G}$:

\begin{example}\label{ex:treatalloc1}
$\mathcal{G}$ could be the set of all measurable subsets of $\mathcal{X}$. This is the largest possible class of admissible allocations. It is straightforward to show that the optimal allocation in this case is as follows: define 
$$\tau(x) := E_Q[Y(1) - Y(0)|X=x]~,$$
then the optimal allocation is given by
$$G^*_{FB} := \{x \in \mathcal{X} : \tau(x) \ge 0\}~,$$
which assigns an individual with covariate $x$ to treatment or control depending on whether the conditional average treatment effect  at $x$ is non-negative.
\end{example}
\begin{example}\label{ex:treatalloc2}
Suppose $\mathcal{X}  \subset \mathbb{R}$, and consider the class of \emph{threshold allocations}:
$$\mathcal{G}  = \{G : G = (-\infty, x] \cap \mathcal{X} \hspace{2mm} \text{or} \hspace{2mm} G = [x, \infty) \cap \mathcal{X}, \hspace{2mm} \text{for} \hspace{2mm} x \in \mathcal{X} \}~.$$
Such a class $\mathcal{G}$ would be reasonable, for example, when assigning scholarships to students: suppose the only covariate available to the planner is a student's GPA, then it may be school policy that only threshold-type rules are to be considered.
\end{example}
\begin{example}\label{ex:treatalloc3}
Let $\mathcal{X} = \mathcal{X}_1\times\mathcal{X}_2  \subset \mathbb{R}^2$, and consider the class of \emph{monotone allocations}:
$$\mathcal{G} = \big\{G : G = \{(x_1,x_2) \in \mathcal{X} | \hspace{1mm} x_2 \ge f(x_1) \hspace{1mm} \text{for} \hspace{1mm} f:\mathcal{X}_1 \rightarrow \mathcal{X}_2 \hspace{1mm} \text{increasing}\} \big\}~.$$

As an example, consider again the setting of assigning scholarships to students (Example \ref{ex:treatalloc2}), but now suppose that the covariates available to the planner are parental income ($x_1$) and a student's GPA ($x_2$). The allocation rules considered in $\mathcal{G}$ are such that the GPA requirement for scholarship eligibility increases with parental income. Such a restriction could be imposed exogenously or could potentially arise as a shape restriction from an economic model.
\end{example}

Given a feasible class $\mathcal{G}$, we denote the highest attainable welfare by:
$$W^*_\mathcal{G} := \sup_{G \in \mathcal{G}} W(G)~.$$
A \emph{decision rule} is a function $\hat{G}$ from the observed data $\{(Y_i,D_i,X_i)\}_{i=1}^n$ into the set of admissible allocations $\mathcal{G}$.  We call the rule that we develop and study in this paper the \emph{Penalized Welfare Maximization} (or PWM) rule. As in much of the literature that follows the work of \cite{manski2004}, we assume that the planner is interested in rules $\hat{G}$ that, on average, are close to the highest attainable welfare. To that end, the criterion by which we evaluate a decision rule is given by what we call maximum $\mathcal{G}$\emph{-regret}:
$$\sup_P E_{P^n}[W^*_\mathcal{G} - W(\hat{G})]~.$$

\section{Penalized Welfare Maximization} \label{sec:results}
In this section, we present the main results of our paper. In Section \ref{ss:EWM}, we review some properties of the empirical welfare maximization (EWM) rule of \cite{kt2015}, which will motivate the PWM rule and serve as an important building block in its construction. In Section \ref{ss:PWMres}, we define the penalized welfare maximization rule and present bounds on its maximum $\mathcal{G}$-regret for general penalties. In Section \ref{ss:PWMpen} we illustrate these results by applying them to some specific penalties, and in particular we show that a standard ``hold-out" procedure can be formalized as a penalty which satisfies our assumptions. In Section \ref{ss:PWMe} we present results for a modification of the PWM rule for quasi-experimental settings where the propensity score is not known and must be estimated. 

\subsection{Empirical Welfare Maximization: a Review and Some Motivation}\label{ss:EWM}
The EWM rule solves a sample analog of the population welfare maximization problem:
$$\hat{G}_{EWM} \in \arg\max_{G \in \mathcal{G}} W_n(G)~,$$
where
\begin{equation} \label{eq:EWMobjective}
W_n(G) := \frac{1}{n}\sum_{i=1}^n \tau_i {\bf 1}\{X_i \in G\} := \frac{1}{n}\sum_{i=1}^n\Big[\Big(\frac{Y_iD_i}{e(X_i)} - \frac{Y_i(1-D_i)}{1-e(X_i)}\Big){\bf 1}\{X_i \in G\}\Big]~.
\end{equation}
\cite{kt2015} show how to formulate this problem as a Mixed Integer Linear Program (MILP) for many classes $\mathcal{G}$ of practical interest (see Appendix C for examples). Alternatively, direct parameter search has been shown to be very effective at solving the welfare maximization problem in certain applications as well: see for example \cite{zhou2018}. Note that to solve this optimization problem, the planner must know the propensity score $e(\cdot)$. This assumption is reasonable if the data comes from a randomized experiment, but clearly could not be made in a setting where the planner is using observational data. \cite{kt2015} derive results for a modified version of the EWM rule where the propensity score is estimated, which we will review in Section \ref{ss:PWMe}.

To derive their non-asymptotic bounds on the maximum $\mathcal{G}$-regret of the EWM rule, \cite{kt2015} make the following assumptions, which we will also maintain in our results:

\begin{assumption}(Bounded Outcomes and Strict Overlap)\label{ass:BO} The set of distributions $\mathcal{P}(M,\kappa)$ has the following properties:
\begin{itemize}
\item There exists some $M < \infty$ such that the support of the outcome variable $Y$ is contained in $[-\frac{M}{2}, \frac{M}{2}]$.
\item There exists some $\kappa \in (0,0.5)$ such that $e(x) \in [\kappa, 1-\kappa]$ for all $x$.
\end{itemize}
\end{assumption}

In order to derive their results, \cite{kt2015} also make the following assumption, which we will \emph{not} require:

\begin{assumption}(Finite VC Dimension):\label{ass:VCdim}
$\mathcal{G}$ has finite VC dimension $V < \infty$.
\end{assumption}

Such an assumption may or may not be restrictive depending on the application in question. Consider Example \ref{ex:treatalloc2}, the class of threshold allocations on $\mathbb{R}$. This class has VC dimension 2, thus Assumption \ref{ass:VCdim} holds. On the other hand, it can be shown that the class of monotone allocations on $[0,1]^2$ that was introduced in Example \ref{ex:treatalloc3} has infinite VC dimension \citep[see][]{devroye1996}.

Given Assumptions \ref{ass:BO} and \ref{ass:VCdim}, \cite{kt2015} derive the following non-asymptotic upper bound on the maximum $\mathcal{G}$-regret of the EWM rule:
\begin{equation} \label{eq:EWMregretbound}
\sup_{P \in \mathcal{P}(M,\kappa)} E_{P^n} [W^*_\mathcal{G} - W(\hat{G}_{EWM})] \le C\frac{M}{\kappa}\sqrt{\frac{V}{n}}~,
\end{equation}
for some universal constant $C$. Moreover, when $X$ has sufficiently ``large" support, they derive the following \emph{lower} bound: for \emph{any} decision rule $\hat{G}$,
\begin{equation} \label{eq:EWMlowerbound}
\sup_{P \in \mathcal{P}(M,\kappa)} E_{P^n}[W^*_\mathcal{G} - W(\hat{G})] \ge RM\sqrt{\frac{V-1}{n}}~,
\end{equation}
for $R$ a universal constant and for all sufficiently large $n$. This shows that the rate of convergence of maximum $\mathcal{G}$-regret implied by (\ref{eq:EWMregretbound}) is the best possible, i.e. that no other decision rule could achieve a faster rate without imposing additional assumptions.

\begin{remark}\label{rem:infVC}
Theorem 2.2 in \cite{kt2015}, which establishes (\ref{eq:EWMlowerbound}), implies that if $X$ has ``large" support and we do not impose additional restrictions on the set of distributions, then it is \emph{impossible} to derive a uniform rate of convergence of maximum $\mathcal{G}$-regret for \emph{any} rule, for classes $\mathcal{G}$ of infinite VC dimension. A related result is derived in \cite{stoye2009}, who shows that in a setting with a continuous covariate, and for any sample size, flipping a coin to assign individuals to treatment is minimax-regret optimal. Hence we will require additional restrictions on the set of distributions when deriving bounds on maximum regret for classes $\mathcal{G}$ of infinite VC dimension.
\end{remark}

\subsection{Penalized Welfare Maximization: General Results}\label{ss:PWMres}
We now consider a setting where the class $\mathcal{G}$ of admissible rules is ``large", but can be ``approximated" by a sequence of less complex subclasses $\mathcal{G}_k$:\footnotemark[1]
$$\mathcal{G}_1 \subseteq \mathcal{G}_2 \subseteq \mathcal{G}_3\subseteq \cdots \subseteq \mathcal{G}_k\subseteq \cdots \subseteq \mathcal{G}~.$$
\footnotetext[1]{As can be seen from the proofs, the results we present below remain valid even if the sequence $\{\mathcal{G}_k\}_k$ is not nested.}
Let $\hat{G}_{n,k}$ be the EWM rule in the class $\mathcal{G}_k$. Then we can decompose the $\mathcal{G}$-regret of the rule $\hat{G}_{n,k}$ as follows:
$$E_{P^n}[W^*_{\mathcal{G}} - W(\hat{G}_{n,k})] = E_{P^n}[W^*_{\mathcal{G}_{k}} - W(\hat{G}_{n,k})] + W^*_{\mathcal{G}} - W^*_{\mathcal{G}_{k}}~.$$ 
Given this decomposition, we call
$$E_{P^n}[W^*_{\mathcal{G}_{k}} - W(\hat{G}_{n,k})]~,$$
the \emph{estimation} error of the rule $\hat{G}_{n,k}$ in the class $\mathcal{G}_k$, and we call
$$W^*_{\mathcal{G}} - W^*_{\mathcal{G}_{k}}~,$$
the \emph{approximation} error (or bias) of the class $\mathcal{G}_k$. Note that since the classes $\{\mathcal{G}_k\}_k$ are nested, the estimation error (respectively approximation bias) is non-decreasing (resp. non-increasing) with respect to $k$. If one had sharp uniform bounds on these errors, then an appropriate choice of $k$ would be one that minimizes the sum of these bounds. In Theorem \ref{thm:regretbound}, we derive an oracle inequality which shows that PWM selects such a $k$, in a data-driven fashion. We use this feature of PWM to derive bounds on maximum regret in two settings of empirical interest.

The first setting we consider is one where $\mathcal{G}$ has infinite VC dimension (see Examples \ref{ex:treatalloc1} and \ref{ex:treatalloc3}). In this setting, performing EWM on the whole class $\mathcal{G}$ may be undesirable. For example, regret may not converge to zero, or may converge to zero at a suboptimal rate \citep[see][for related results in a regression context]{birge1993}, or it may simply be the case that maximization over $\mathcal{G}$ is computationally difficult. Instead, we apply EWM to an approximating class $\mathcal{G}_k$, and we allow the complexity of the approximating class to grow as the sample size increases. We present examples of relevant approximating classes in Examples \ref{ex:aseq3} and \ref{ex:aseq1} below. In Corollary \ref{cor:regretrate} we establish a bound on maximum regret in this setting.

The second setting that we consider is one where the class $\mathcal{G}$ has finite but large VC dimension relative to the sample size. This situation can arise, for instance, in applications where the planner has a large set of covariates on which to base treatment, and where the feasible allocations are threshold allocations (see Example \ref{ex:aseq2} below). The bound on regret given by (\ref{eq:EWMregretbound}) increases with the VC dimension $V$ of $\mathcal{G}$, so that EWM tends to ``overfit"' the data when $V$ is large relative to the sample size. In this situation, it may be beneficial to perform EWM in a subclass $\mathcal{G}'$ of smaller VC dimension, and hence we face the same tradeoff between estimation and approximation error that was noted above. In Corollary \ref{cor:regretpt} we specialize Theorem \ref{thm:regretbound} to a finite collection of approximating classes, and then in Corollary \ref{cor:knownclass} establish a bound for the PWM rule which shows that it behaves as if we knew the correct class $\mathcal{G'}$ to use ex-ante, in the special case where the optimal allocation in $\mathcal{G}$ is contained in $\mathcal{G}'$. 

We impose the following assumption on our sequence of classes, which we call a \emph{sieve} of $\mathcal{G}$:

\begin{assumption}\label{ass:seqVCdim}
The sequence of classes 
$$\mathcal{G}_1 \subseteq \mathcal{G}_2\subseteq \mathcal{G}_3\subseteq \cdots \subseteq \mathcal{G}_k\subseteq \cdots \subseteq \mathcal{G}$$
is such that each class $\mathcal{G}_k$ has VC dimension $V_k$, which is finite.\footnotemark[2] 
\end{assumption} 

\footnotetext[2]{\cite{kt2015} additionally assume that their class $\mathcal{G}$ is countable so as to avoid potential measurability concerns. We instead choose not to address these concerns explicitly, as is done in most of the literature on classification. See \cite{van1996} for a discussion of possible resolutions to this issue.}

We present three examples of sieves $\mathcal{G}_k$ in Examples \ref{ex:aseq2}, \ref{ex:aseq3}, \ref{ex:aseq1} below. Given a sieve $\{\mathcal{G}_k\}_{k}$, let
$$\hat{G}_{n,k} := \arg\max_{G \in \mathcal{G}_k} W_n(G)~,$$
be the EWM rule in the class $\mathcal{G}_k$. Our goal is to select the appropriate class $k^*$ in which to perform EWM. We do this by selecting the class $k^*$ in the following way:  for each class $\mathcal{G}_k$, suppose we had some (potentially data-dependent) measure $C_n(k)$ of the amount of ``overfitting"  that results from using the rule $\hat{G}_{n,k}$ (Assumption \ref{ass:Cnk} specifies our precise conditions on $C_n(k)$). Given such a measure $C_n(k)$, let $\{t_k\}_{k=1}^\infty$ be an increasing sequence of real numbers, and define the following penalized objective function:
\begin{equation}
R_{n,k}(G) := W_n(G) - C_n(k) - \sqrt{\frac{t_k}{n}}~.
\label{0eqn1}
\end{equation}
Then the \emph{penalized welfare maximization} rule $\hat{G}_n$ is defined as follows:
$$\hat{G}_n := \hat{G}_{n,\hat{k}^*}~,$$
where 
$$\hat{k}^* := \arg\max_k R_{n,k}(\hat{G}_{n,k})~.$$ 
In words, the PWM rule selects an allocation which maximizes a penalized version of the empirical welfare, with the penalty for allocations in $\mathcal{G}_k$ given by the term $C_n(k)$ (plus the auxiliary term $\sqrt{t_k/n}$). 

\begin{remark}\label{rem:k/n}
Note that the PWM objective function $R_{n,k}(\cdot)$ includes the term: $\sqrt{t_k/n}$. This component of the objective is a technical device that is used to ensure that the classes get penalized at a sufficiently fast rate as $k$ increases. The dependence of the penalty term on the sequence $\{t_k\}_k$ is somewhat undesirable, as it implies that the size of the penalty term for a given class depends on the specific choice of the sequence $\{t_k\}_k$. This technical device seems$-$however$-$unavoidable, and similar terms are pervasive throughout the literature on model selection in classification: see \cite{kolt2001}, \cite{bartlett2002}, \cite{boucheron2005}, \cite{kolt2008}. Nevertheless, as we will show, our results hold for many choices of $\{t_k\}_{k=1}^\infty$ (including our preferred choice $t_k = k$), and the choice is reflected explicitly in the bounds that we derive. Moreover, if one is only interested in using PWM in settings where the sequence of classes is finite, then we will show that the $\sqrt{t_k/n}$ term is not required. For simplicity, and unless otherwise specified, we will present all of our results with the specific choice $t_k = k$; in practice we find that overall performance of our procedure is essentially unaffected by this decision.
\end{remark}

\begin{remark}\label{rem:detseq}
As noted by \cite{kt2015}, given a sieve $\{\mathcal{G}_k\}_{k}$, one can use their results to derive uniform (w.r.t $\mathcal{P}(M,\kappa)$) bounds on the estimation error. If one has in addition uniform bounds on the approximation bias, then one can construct a decision rule $\hat{G}_{n,k(n)}$, where $k(n)$ minimizes sum of these bounds. However, the merit of such an approach would depend on obtaining ``good" computable bounds for the estimation and approximation error, which may be difficult to do in practice. For instance, the uniform bounds on the estimation error from \cite{kt2015} depend on the VC dimension of the classes $\{\mathcal{G}_k\}_k$ which may be hard to bound precisely. Furthermore, a deterministic choice of $k(n)$ may lead to suboptimal rates if the true DGP satisfies additional regularity conditions which may be unknown to the econometrician. Given these challenges, PWM displays two advantages. First, PWM will perform$-$in a data-driven way$-$the optimal tradeoff between the approximation and estimation error, without relying on explicit bounds for these quantities. Second, PWM will select the subclass $\hat{k}$ over which to perform EWM in a way that adapts to additional ``regularities" that may be satisfied by the true DGP.
\end{remark}

We present three examples of sieves $\{\mathcal{G}_k\}_k$:

\begin{example}\label{ex:aseq2}
Recall the class of threshold allocations in one dimension introduced in Example \ref{ex:treatalloc2}. Now we introduce the class of threshold allocations in $K$ dimensions. Let $x = (x_1, ..., x_k) \in \mathcal{X} \subset \mathbb{R}^K$, and consider the following class $\mathcal{G}$:
\[\mathcal{G} = \left\{G \subset \mathcal{X}: G = \{x \in \mathcal{X}: s_kx_k \le \bar{x}_k \hspace{1mm} \text{for} \hspace{1mm} k \in \{1, ..., K\}\}, \bar{x} \in \mathbb{R}^K, s \in \{-1, 1\}^K\right\}\]
For large $K$, the VC dimension of $\mathcal{G}$ can become large relative to the sample size, and we may want to base treatment only on a smaller subset of the covariates. This is a variant of the best-subset selection problem (see \cite{chen2016} for related results in a classification context). However, the question still remains as to \emph{how many} covariates to consider. Consider the sieve sequence $\{\mathcal{G}_k\}_{k=1}^K$, where $\mathcal{G}_k$ corresponds to the class of threshold allocations that uses $(k-1)$ out of $K$ covariates, then PWM applied to this sieve can determine, in a data-driven way, the number of covariates to use for treatment assignment  In Appendix B.3 we illustrate PWM's ability to reduce regret in this context in a simulation study.\end{example}

\begin{example}\label{ex:aseq3}
Recall the class of monotone allocations introduced in Example \ref{ex:treatalloc3}. Suppose that $\mathcal{X} = [0,1]^2$, so that $\mathcal{G}$ has infinite VC dimension \citep[see][for a proof of this fact]{devroye1996}. We will construct a sieve for $\mathcal{G}$ where we approximate sets in $\mathcal{G}$ with sets that feature monotone, piecewise linear boundaries. We proceed in three steps.

First define, for $T$ an integer and $0 \le j \le T$, the following function $\psi_{T,j}: [0,1] \rightarrow [0,1]$:
\[ \psi_{T,j}(x) = \begin{cases} 
      1 - |Tx - j|, & x \in [\frac{j-1}{T},\frac{j+1}{T}] \cap [0,1] \\
      0, & \text{otherwise}~.
   \end{cases}
\]
The function $\psi_{T,j}(\cdot)$ is simply a triangular kernel whose base shifts with $j$ and is scaled by $T$. Next, using these functions, define the following classes $\mathcal{S}_k$:
$$\mathcal{S}_k = \Big\{G: G = \{x = (x_1,x_2) \in \mathcal{X}| \hspace{1mm} \sum_{j=0}^T \theta_j \psi_{T,j}(x_1) + x_2 \ge 0\} \hspace{2mm}  \text{for} \hspace{2mm} \theta_j \in \mathbb{R}, \hspace{1mm} \forall \hspace{1mm} 0 \le j \le T \Big\}~,$$ 
where $T = 2^{k-1}$. It can be shown using results in \cite{dudley1999} that $\mathcal{S}_k$ has VC dimension $T+2$. In words, the sets in $\mathcal{S}_k$ divide the covariate space into treatment and control such that the resulting boundary is a piecewise linear curve. 

Finally, to construct our approximating class $\mathcal{G}_k$, we modify the class $\mathcal{S}_k$ to ensure that the resulting treatment allocations are monotone. For $T$ an integer, let $D_T$ be the following $T \times (T+1)$ ``difference" matrix:
 \[D_T:=\begin{bmatrix}
    -1 & 1 & 0 & \dots  &0& 0 \\
    0 & -1 & 1 & \dots  &0& 0 \\
    \vdots & \vdots & \vdots & \ddots & \vdots& \vdots \\
    0 & 0 & 0 & \dots &-1 & 1
\end{bmatrix}~.
\]
Then $\mathcal{G}_k$ is defined as follows:
$$\mathcal{G}_k = \Big\{G: G\in \mathcal{S}_k \hspace{2mm} \text{and} \hspace{2mm} D_T \Theta_T \ge 0~,~\Theta_T = [\theta_0 \cdots \theta_T]'\Big\}~,$$
for $T = 2^{k-1}$. This construction, which we borrow from \cite{beresteanu2004}, is useful as it imposes monotonicity through a \emph{linear} constraint, which is ideal for our implementation. In Section 4, we use this sequence of approximating classes in an application to the JTPA study, and then derive bounds on the maximum regret of PWM when $\mathcal{X} = [0,1]^2$; Proposition \ref{prop:monbias} provides a uniform rate at which $W^*_{\mathcal{G}_k} \rightarrow W^*_{\mathcal{G}}$ under some additional regularity conditions, and Corollary \ref{cor:monrate} derives the corresponding bound on maximum $\mathcal{G}$-regret.
\end{example}

\begin{example}\label{ex:aseq1}
Decision-tree based policy classes have recently become popular in treatment choice \citep[see for example][]{kallus2016, athey2017}.  Suppose the planner faces no restrictions on treatment assignment, so that $\mathcal{G}$ is the class of all measurable subsets of $\mathcal{X}$. In this case we could consider approximating $\mathcal{G}$ via decision trees of \emph{increasing depth}. PWM could them be used to select the appropriate depth to use in practice.
\end{example}

We are now prepared to state the main results of the paper. We require the following high-level condition on the penalty $C_n(k)$:
\begin{assumption}\label{ass:Cnk}There exist positive constants $c_0$ and $c_1$ such that $C_{n}(k)$ satisfies the following tail inequality for every $n$, $k$, and for every $\epsilon > 0$:
$$\sup_{P \in \mathcal{P}(M,\kappa)} P^n(W_n(\hat{G}_{n,k}) - W(\hat{G}_{n,k}) - C_n(k) > \epsilon) \le c_1e^{-2c_0n\epsilon^2}~.$$
\end{assumption}

Let us provide some intuition for this assumption. Given an EWM rule $\hat{G}_{n,k}$, the value of the empirical welfare is given by $W_n(\hat{G}_{n,k})$. To evaluate $\mathcal{G}$-regret, we would ideally like to know the value of population welfare $W(\hat{G}_{n,k})$. Although the latter quantity is unknown, if we could define the (infeasible) penalty $C_n(k)$ as $W_n(\hat{G}_{n,k}) - W(\hat{G}_{n,k})$, then the penalized objective $W_n(\hat{G}_{n,k}) - C_n(k)$ would be exactly equal to $W(\hat{G}_{n,k})$. Since implementing such a $C_n(k)$ is impossible, our assumption requires that our feasible penalty be a good (empirical) upper bound on $W_n(\hat{G}_{n,k}) - W(\hat{G}_{n,k})$.  In Section \ref{ss:PWMpen}, we provide some specific examples of penalties that satisfy this assumption. In particular, we show that a standard ``hold-out" procedure can be formalized as such a penalty. We are now ready to state our main workhorse result: an \emph{oracle inequality} that characterizes the $\mathcal{G}$-regret of the PWM rule.

\begin{theorem}\label{thm:regretbound}
Suppose that Assumptions \ref{ass:so}, \ref{ass:BO}, \ref{ass:seqVCdim} and \ref{ass:Cnk} hold, and set $t_k = k$ in (\ref{0eqn1}). Then there exist constants $\Delta$ and $c_0$ such that for every $P \in \mathcal{P}(M,\kappa)$:
$$E_{P^n}[W^*_\mathcal{G} - W(\hat{G}_n)] \le \inf_k \Big[E_{P^n}[C_n(k)]+ \big(W^*_{\mathcal{G}} - W^*_{\mathcal{G}_{k}}\big) + \sqrt{\frac{k}{n}}\Big]+ \sqrt{\frac{\log(\Delta e)}{2c_0n}}~.$$
\end{theorem}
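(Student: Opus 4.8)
The plan is to run the classical structural-risk-minimization (oracle inequality) argument, adapted to welfare maximization with an estimated objective. Fix an arbitrary index $j$ (the infimum over $j$ is taken only at the very end). Write $\hat{k}$ for the random index selected by the PWM rule, so that $\hat{G}_n = \hat{G}_{n,\hat{k}}$; abbreviate the ``overfitting'' of class $k$ by $\xi_k := W_n(\hat{G}_{n,k}) - W(\hat{G}_{n,k})$; and let $G^*_j \in \mathcal{G}_j$ attain $W^*_{\mathcal{G}_j}$ (if the supremum is not attained, use $\delta$-maximizers and let $\delta \downarrow 0$ at the end, which is routine). Observe that Assumption \ref{ass:Cnk} is exactly a $P$-uniform sub-Gaussian upper-tail bound on $\xi_k - C_n(k)$.

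The first step is a purely deterministic chain of inequalities. Since $\hat{G}_{n,j}$ maximizes $W_n$ over $\mathcal{G}_j$ and $G^*_j \in \mathcal{G}_j$, we have $W^*_{\mathcal{G}_j} = W_n(G^*_j) + \big(W(G^*_j)-W_n(G^*_j)\big) \le W_n(\hat{G}_{n,j}) + \big(W(G^*_j)-W_n(G^*_j)\big)$. Since $\hat{k}$ maximizes $R_{n,k}(\hat{G}_{n,k}) = W_n(\hat{G}_{n,k}) - C_n(k) - \sqrt{k/n}$ over $k$, we have $W_n(\hat{G}_{n,j}) \le W_n(\hat{G}_{n,\hat{k}}) - C_n(\hat{k}) - \sqrt{\hat{k}/n} + C_n(j) + \sqrt{j/n}$. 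Finally $W_n(\hat{G}_{n,\hat{k}}) = W(\hat{G}_n) + \xi_{\hat{k}}$ by definition. Chaining these three facts and bounding $\xi_{\hat{k}} - C_n(\hat{k}) - \sqrt{\hat{k}/n} \le Z_1 := \sup_{k\ge 1}\big[\xi_k - C_n(k) - \sqrt{k/n}\big]$ gives, deterministically,
\[ W^*_{\mathcal{G}_j} - W(\hat{G}_n) \;\le\; Z_1 + Z_2^{(j)} + C_n(j) + \sqrt{j/n}, \qquad Z_2^{(j)} := W(G^*_j) - W_n(G^*_j). \]

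It remains to take expectations and control $E_{P^n}[Z_1]$ and $E_{P^n}[Z_2^{(j)}]$. For $Z_1$, a union bound over $k$ together with Assumption \ref{ass:Cnk} gives $\sup_{P} P^n(Z_1 > \epsilon) \le \sum_{k\ge 1} c_1 e^{-2c_0 n(\epsilon + \sqrt{k/n})^2} \le c_1 e^{-2c_0 n \epsilon^2}\sum_{k\ge 1} e^{-2c_0 k}$, where the decisive inequality is $(\epsilon+\sqrt{k/n})^2 \ge \epsilon^2 + k/n$ — this is precisely why the $\sqrt{t_k/n}$ term is built into $R_{n,k}$ (cf. Remark \ref{rem:k/n}): it makes the series converge while leaving $Z_1$ with a $P$-uniform Gaussian-type tail $D_1 e^{-2c_0 n\epsilon^2}$ at the original rate. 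For $Z_2^{(j)}$, since $W_n(G^*_j)$ is an average of i.i.d.\ terms bounded in absolute value by $M/(2\kappa)$ under Assumption \ref{ass:BO}, Hoeffding's inequality gives $\sup_P P^n(Z_2^{(j)} > \epsilon) \le e^{-bn\epsilon^2}$ with $b$ a constant depending only on $M$ and $\kappa$ and, crucially, \emph{uniform in $j$}. A standard truncate-and-integrate bound on the positive-part expectation of a nonnegative variable with such a tail, together with the fact that all resulting terms are $\Theta(n^{-1/2})$, lets one fold $E_{P^n}[(Z_1)^+] + E_{P^n}[(Z_2^{(j)})^+]$ into $\sqrt{\log(\Delta e)/(2c_0 n)}$ for a sufficiently large universal constant $\Delta$ (after, if necessary, replacing $c_0$ by $\min\{c_0,b/2\}$). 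Taking expectations in the displayed inequality, using $E_{P^n}[Z_1]\le E_{P^n}[(Z_1)^+]$ and similarly for $Z_2^{(j)}$, adding $W^*_{\mathcal{G}} - W^*_{\mathcal{G}_j}$ to both sides, and finally taking the infimum over $j\ge 1$ yields the theorem.

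I expect the main obstacle to be the bookkeeping that keeps the final slack term in exactly the advertised form $\sqrt{\log(\Delta e)/(2c_0 n)}$: one must (i) couple Assumption \ref{ass:Cnk} with the union bound so that the $\sqrt{k/n}$ penalty ``pays for'' the sum over an infinite sieve without degrading the rate constant, and then choose the truncation point in the tail integration so that the two separate concentration constants and the lower-order remainders collapse into one $\Delta$; and (ii) dispose of the extra estimated-objective term $Z_2^{(j)} = W(G^*_j) - W_n(G^*_j)$ — which has no analogue in the classification model-selection literature, where the empirical objective carries no propensity-score weighting — with a bound that is uniform in $j$, so that it does not contaminate the approximation-error term inside the infimum. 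Everything else is a routine chaining of inequalities.
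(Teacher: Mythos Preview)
Your argument is correct and follows the same structural-risk-minimization decomposition as the paper: your deterministic chain is the paper's splitting $(W^*_{\mathcal{G}_k} - R_n(\hat{G}_n)) + (R_n(\hat{G}_n) - W(\hat{G}_n))$ written out in full, and your tail bound on $Z_1$ is exactly the paper's Lemma~\ref{lemma:tailbound}.

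The one place you take a longer route is the handling of $Z_2^{(j)} = W(G^*_j) - W_n(G^*_j)$. You invoke Hoeffding to bound $E[(Z_2^{(j)})^+]$ and then absorb the resulting $O(n^{-1/2})$ contribution into $\Delta$ (at the possible cost of shrinking $c_0$ to $\min\{c_0,b/2\}$). The paper avoids all of this: since $G^*_j$ is a \emph{fixed} set (not data-dependent), $W_n(G^*_j)$ is an i.i.d.\ average with mean $W(G^*_j)$, so $E_{P^n}[Z_2^{(j)}] = 0$ exactly and no concentration is needed. This dissolves precisely the ``bookkeeping obstacle'' you flag in your final paragraph and keeps $c_0$ equal to the constant from Assumption~\ref{ass:Cnk}. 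Your version still yields the statement as written (since the theorem only asserts existence of \emph{some} $\Delta$ and $c_0$), but the simpler observation gives the sharper constants used downstream in Propositions~\ref{prop:rademacherbound} and~\ref{prop:holdoutbound}.
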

Theorem \ref{thm:regretbound} forms the basis of all the results we present in Sections \ref{ss:PWMres} and \ref{ss:PWMpen}. It says that, at least from the perspective of \emph{pointwise} (as opposed to maximum) $\mathcal{G}$-regret, the PWM rule is able to balance the tradeoff between $E_{P^n}[C_n(k)]$ and the approximation error, at the cost of adding two additional terms that are $O(1/\sqrt{n})$. The relative importance of these terms is hard to quantify at this level of generality, and we will attempt to shed some light on them, for specific penalties, in Section \ref{ss:PWMpen}. Note that this result does not \emph{quite} accomplish our initial goal of balancing the estimation and approximation error along our sieve: it is possible to choose a $C_n(k)$ that satisfies Assumption \ref{ass:Cnk} for which $E_{P^n}[C_n(k)]$ is too large a bound for the estimation error. For this reason, we also impose the requirement that any penalty we consider should have the following additional property:

\begin{assumption}\label{ass:Cnk2} There exists a positive constant $C_1$ such that, for every $n$, $C_{n}(k)$ satisfies
$$\sup_{P \in \mathcal{P}(M,\kappa)} E_{P^n}[C_n(k)] \le C_1\sqrt{\frac{V_k}{n}}~,$$
where $V_k$ is the $VC$ dimension of $\mathcal{G}_k$.
\end{assumption}
This assumption ensures that $E_{P^n}[C_n(k)]$ is comparable to the estimation error for EWM derived in (\ref{eq:EWMregretbound}), which was shown to be rate-optimal (for the class $\mathcal{P}(M,\kappa)$) in (\ref{eq:EWMlowerbound}).

The next result we present is a bound on maximum regret for our first setting of interest: choosing the appropriate approximating class when $\mathcal{G}$ has infinite VC dimension. As discussed in Remark \ref{rem:infVC}, a bound on maximum regret may not exist unless we impose additional regularity conditions on the family of DGPs under consideration. Hence we make the additional assumption that we restrict ourselves to a set of distributions $\mathcal{P}_r$ for which there exists a uniform bound on the approximation error. Note however that we do not assume that the rate of decay of the approximation bias is necessarily known to the econometrician, thus illustrating the ``oracle" nature of our results.

\begin{assumption}\label{ass:unifrate}
Let $\mathcal{P}_r$ be a set of distributions such that
$$\sup_{P \in \mathcal{P}_r} W^*_{\mathcal{G}} - W^*_{\mathcal{G}_{k}} = O(\gamma_k)~,$$
$$\sup_{P \in \mathcal{P}_r \cap \mathcal{P}(M,\kappa)} E_{P^n}[C_n(k)] = O(\zeta(k,n))~,$$
for a sequence $\gamma_k \rightarrow 0$, and $\zeta(k,n)$ non-decreasing in $k$, $\zeta(k,n) \rightarrow 0$ as $n \rightarrow \infty$.
\end{assumption}
The first assumption asserts that we have a uniform bound on the approximation error. We present an example of such a uniform bound for our application in Section \ref{sec:application}.  The second assumption is made to highlight the following possibility: although Assumption \ref{ass:Cnk2} guarantees that we can satisfy this restriction with $\zeta(k,n) = \sqrt{V_k/n}$, it is possible that, once we have imposed that $P$ must lie in $\mathcal{P}_r$, an even \emph{tighter} bound may exist on $C_n(k)$ (see for example the discussion which follows Corollary \ref{cor:monrate} below). We emphasize that PWM will balance the tradeoff between the estimation and approximation error according to the \emph{tighest} possible bounds on $E_P[C_n(k)]$ and $W^*_{\mathcal{G}} - W^*_{\mathcal{G}_{k}}$, regardless of whether or not we know these bounds in a given application.

\begin{remark}\label{rem:margin}
A well-known restriction on the class of distributions which may lead to faster rates $\zeta(k,n)$ for certain choices of $C_n(k)$ is the \emph{margin assumption} \citep[see][for a formal definition in the context of treatment choice]{kt2015}. Roughly, the margin assumption imposes restrictions on the behavior of $\tau(\cdot)$ near zero, and thus allows for faster than root-n rates of convergence. Although the study of margin-adaptive penalties is beyond the scope of our paper, \cite{massart2007}  argues (in a classification context) that the hold-out penalty is margin-adaptive. We introduce this penalty in Section \ref{ss:PWMpen} below.
\end{remark}

Given Assumption \ref{ass:unifrate}, we immediately obtain our first corollary:

\begin{corollary}\label{cor:regretrate} Under Assumptions \ref{ass:so}, \ref{ass:BO}, \ref{ass:seqVCdim}, \ref{ass:Cnk}, and \ref{ass:unifrate}, we have that $$\sup_{P \in \mathcal{P}_r \cap \mathcal{P}(M,\kappa)} E_{P^n}[W^*_\mathcal{G} - W(\hat{G}_n)] \le \inf_k \Big[O(\zeta(k,n)) + O(\gamma_k) + \sqrt{\frac{k}{n}}\Big]+ \sqrt{\frac{\log(\Delta e)}{2c_0n}}~.$$
\end{corollary}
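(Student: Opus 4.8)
The plan is to derive Corollary \ref{cor:regretrate} as an almost immediate consequence of Theorem \ref{thm:regretbound}, so the work is really just bookkeeping on the order of quantifiers. First I would start from the pointwise bound in Theorem \ref{thm:regretbound}, which holds for every $P \in \mathcal{P}(M,\kappa)$:
$$E_{P^n}[W^*_\mathcal{G} - W(\hat{G}_n)] \le \inf_k \Big[E_{P^n}[C_n(k)]+ \big(W^*_{\mathcal{G}} - W^*_{\mathcal{G}_{k}}\big) + \sqrt{\tfrac{k}{n}}\Big]+ \sqrt{\tfrac{\log(\Delta e)}{2c_0n}}~.$$
Now restrict attention to $P \in \mathcal{P}_r \cap \mathcal{P}(M,\kappa)$ and take the supremum of both sides over this set. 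The right-hand side, apart from the last additive term (which does not depend on $P$), is of the form $\inf_k [a_P(k) + b_P(k) + \sqrt{k/n}]$ with $a_P(k) = E_{P^n}[C_n(k)]$ and $b_P(k) = W^*_{\mathcal{G}} - W^*_{\mathcal{G}_k}$.

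The one genuine inequality I need is the standard fact that for a fixed index set, $\sup_P \inf_k f_P(k) \le \inf_k \sup_P f_P(k)$: the supremum of an infimum is at most the infimum of the supremum. Applying this lets me pull the $\sup_P$ inside the $\inf_k$, and then split the supremum of the sum into a sum of suprema, $\sup_P(a_P(k)+b_P(k)) \le \sup_P a_P(k) + \sup_P b_P(k)$. At this point Assumption \ref{ass:unifrate} supplies exactly the two ingredients needed: $\sup_{P \in \mathcal{P}_r} (W^*_{\mathcal{G}} - W^*_{\mathcal{G}_k}) = O(\gamma_k)$ controls the second supremum, and $\sup_{P \in \mathcal{P}_r \cap \mathcal{P}(M,\kappa)} E_{P^n}[C_n(k)] = O(\zeta(k,n))$ controls the first. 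Substituting these into the bound yields
$$\sup_{P \in \mathcal{P}_r \cap \mathcal{P}(M,\kappa)} E_{P^n}[W^*_\mathcal{G} - W(\hat{G}_n)] \le \inf_k \Big[O(\zeta(k,n)) + O(\gamma_k) + \sqrt{\tfrac{k}{n}}\Big]+ \sqrt{\tfrac{\log(\Delta e)}{2c_0n}}~,$$
which is the claimed statement. (The $O(\cdot)$ terms here inherit their implicit constants from those in Assumption \ref{ass:unifrate}, uniformly in $k$ and $n$, so the $\inf_k$ is taken over genuine functions of $k,n$.)

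There is essentially no hard step — the only subtlety worth flagging is the $\sup$–$\inf$ exchange, which is valid in the harmless direction we need (we are upper-bounding), and the fact that it is legitimate to move $\sup_P$ past the $O(\cdot)$ notation only because Assumption \ref{ass:unifrate} is itself stated as a uniform-over-$\mathcal{P}_r$ bound; one should be careful that the implicit constants do not secretly depend on $P$. Since Assumption \ref{ass:unifrate} is phrased precisely to rule that out, the argument goes through verbatim. Everything else (Assumptions \ref{ass:so}, \ref{ass:BO}, \ref{ass:seqVCdim}, \ref{ass:Cnk}) is inherited from the hypotheses of Theorem \ref{thm:regretbound} and needs no further use here.
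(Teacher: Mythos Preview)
Your proposal is correct and matches the paper's approach: the paper does not give an explicit proof of this corollary, treating it as an immediate consequence of Theorem \ref{thm:regretbound} together with Assumption \ref{ass:unifrate}, which is precisely what you have spelled out. The $\sup$--$\inf$ exchange and the splitting of the supremum are the only steps needed, and you have handled them correctly.
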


 As mentioned in Remark \ref{rem:detseq}, if $\{\zeta(k,n)\}_{k,n}$ and $\{\gamma_k\}_k$ were known, then we could achieve such a result with a deterministic sequence $k(n)$. The strength of the PWM rule then is that we achieve the \emph{same} behavior for any class $\mathcal{G}$ and approximating sequence $\{\mathcal{G}_k\}_k$ without having to know these quantities in practice. We present an application of this result in Section \ref{sec:application}, in the setting of Example \ref{ex:aseq3}.

The second Corollary we present specializes Theorem \ref{thm:regretbound} to our second setting of interest: the appropriate selection of a subclass when the VC-dimension of $\mathcal{G}$ is finite and large relative to the sample size. The result highlights two important points. First, it shows that by balancing the trade-off between the approximation and estimation error, PWM can potentially lead to a reduction in regret (relative to EWM) for values of the sample size that are comparable in magnitude to the VC-dimension of $\mathcal{G}$. Second, it illustrates how our bound changes when the sieve is finite and we drop the auxiliary $\sqrt{k/n}$ component of our penalty.

\begin{corollary}\label{cor:regretpt} Suppose that Assumptions \ref{ass:so}, \ref{ass:BO}, \ref{ass:seqVCdim}, \ref{ass:Cnk}, and \ref{ass:Cnk2} hold, and that $\mathcal{G}_K=\mathcal{G}$ for some finite $K$. Furthemore, suppose that in our definition of the penalty we omit the term $\sqrt{k/n}$. Then we have that
$$E_{P^n}[W^*_\mathcal{G} - W(\hat{G}_n)] \le \inf_{1\leq k \leq K} \Big[C_1\sqrt{\frac{V_k}{n}}+ \big(W^*_{\mathcal{G}} - W^*_{\mathcal{G}_{k}}\big)\Big]+ \sqrt{\frac{\log(Kc_1e)}{2c_0n}}~.$$
\end{corollary}

Note that if the above bound is minimized at $k = K$, then the approximation error $W^*_{\mathcal{G}} - W^*_{\mathcal{G}_{k}}$ is zero and the resulting bound is comparable to the one derived in (\ref{eq:EWMregretbound}), with one additional term. In Section \ref{ss:PWMpen} we argue that for specific choices of $C_n(k)$ this term can be quantified more precisely.

Our final corollary of Section \ref{ss:PWMres} considers the particular setting in which the constrained optimum $W^*_\mathcal{G}$ over the class $\mathcal{G}$ is \emph{achieved} in $\mathcal{G}_{k_0}$, for some $k_0$, but that this class is unknown to the econometrician. The result shows that the resulting upper bound on maximum regret for PWM is as if we had performed EWM in the appropriate class $\mathcal{G}_{k_0}$.

\begin{corollary}\label{cor:knownclass} Suppose that Assumptions \ref{ass:so}, \ref{ass:BO}, \ref{ass:seqVCdim}, \ref{ass:Cnk}, and \ref{ass:Cnk2} hold, and let $\mathcal{P}_k \subset \mathcal{P}(M,\kappa)$ be the set of distributions such that $G^* \in \mathcal{G}_{k}$, then
$$\sup_{P \in \mathcal{P}_k} E_{P^n}[W^*_\mathcal{G} - W(\hat{G}_n)] \le C_1\sqrt{\frac{V_k}{n}} + \sqrt{\frac{k}{n}} + \sqrt{\frac{\log(\Delta e)}{2c_0n}}~.$$
Furthermore, if $\{\mathcal{G}_k\}_{k=1}^K$ is finite, and we do not include the $\sqrt{k/n}$ term as discussed in Remark \ref{rem:k/n}, then we have that:
$$\sup_{P \in \mathcal{P}_k}E_{P^n}[W^*_\mathcal{G} - W(\hat{G}_n)]  \le C_1\sqrt{\frac{V_{k}}{n}} + \sqrt{\frac{\log(Kc_1e)}{2c_0n}}~,$$
where $c_0$, $c_1$ are as in Assumption \ref{ass:Cnk}.
\end{corollary}

\subsection{Penalized Welfare Maximization: Some Examples of Penalties}\label{ss:PWMpen}
This section illustrates the results of Section \ref{ss:PWMres} with two concrete choices for the penalty $C_n(k)$, and quantifies the size of the auxiliary term in the bound of Theorem \ref{thm:regretbound} for these penalties. The first penalty we present, the holdout penalty, formalizes a sample-splitting procedure. The second penalty, the Rademacher penalty, does not involve sample-splitting but could be computationally burdensome in practice. Both of the penalties share the property that they do not require that the practitioner have precise bounds on the VC dimensions $V_k$ of the approximating classes (only that they are finite), which we feel is important to make the method broadly applicable. 

\subsubsection{The Holdout Penalty}
The first penalty we introduce is motivated by the following idea: fix some number $\ell \in (0,1)$ such that $m := n(1-\ell)$ (for expositional simplicity suppose that $m$ is an integer), and let $r := n - m$.
Given our original sample $S_n = \{(Y_i,D_i,X_i)\}_{i=1}^n$, let $S_n^E := \{(Y_i,D_i,X_i)\}_{i=1}^m$ denote what we call the \emph{estimating} sample, and let $S_n^T := \{(Y_i,D_i,X_i)\}_{i=m+1}^n$ denote the \emph{testing} sample. Now, using $S_n^E$, compute $\hat{G}_{m,k}$ for each $k$. Intuitively, we could get a sense of the efficacy of $\hat{G}_{m,k}$ by applying this rule to the subsample $S_n^T$ and computing the empirical welfare $W_{r}(\hat{G}_{m,k})$. We could then select the class $k$ that results in the highest empirical welfare $W_{r}(\hat{G}_{m,k})$.

This idea can be formalized in our framework by treating it as a PWM-rule on the estimating sample, with the following penalty: for each EWM rule $\hat{G}_{m,k}$ estimated on $S_n^E$, let $$W_m(\hat{G}_{m,k}) = \frac{1}{m}\sum_{i=1}^m\tau_i {\bf 1}\{X_i \in \hat{G}_{m,k}\}~,$$
be the empirical welfare of the rule $\hat{G}_{m,k}$ on  $S_n^E$ and let
$$W_r(\hat{G}_{m,k}) = \frac{1}{r}\sum_{i=m+1}^n\tau_i {\bf 1}\{X_i \in \hat{G}_{m,k}\}~,$$
be the empirical welfare of the rule $\hat{G}_{m,k}$ on  $S_n^T$. We define the \emph{holdout} penalty to be
$$C_m(k) := W_m(\hat{G}_{m,k}) - W_r(\hat{G}_{m,k})~.$$

Now, recall that the PWM rule is given by
$$\hat{G}_m = \arg\max_k \left[W_m(\hat{G}_{m,k}) - C_m(k) - \sqrt{\frac{k}{m}}\right]~,$$
which, given the definition of $C_m(k)$, simplifies to
$$\hat{G}_m = \arg\max_k \left[W_r(\hat{G}_{m,k}) - \sqrt{\frac{k}{m}}\right]~.$$
Hence we see that the PWM rule with the holdout penalty reproduces the intuition presented above (with the usual addition of the $\sqrt{k/m}$ term; see Remark \ref{rem:k/n}).

We check the conditions of Assumptions \ref{ass:Cnk} and \ref{ass:Cnk2}:
\begin{lemma}\label{lem:holdout}
Consider Assumptions \ref{ass:so}, \ref{ass:BO}, \ref{ass:seqVCdim}. Suppose we have a sample of size $n$ and recall that $m = n(1-\ell)$ and $r = n - m$. Let $C_m(k)$ be the holdout penalty as defined above. Then we have that 
$$P^n(W_m(\hat{G}_{m,k}) - W(\hat{G}_{m,k}) - C_m(k) > \epsilon) \le \exp\Big(-2\Big(\frac{\kappa}{M}\Big)^2n\ell\epsilon^2\Big),$$
and $$E_{P^n}[C_m(k)] \le C\frac{M}{\kappa\sqrt{(1-\ell)}}\sqrt{\frac{V_k}{n}}~,$$
where $C$ is the same universal constant that appears in equation (\ref{eq:EWMregretbound}).
\end{lemma}

With Lemma \ref{lem:holdout} established, Theorem \ref{thm:regretbound} becomes:

\begin{proposition}\label{prop:holdoutbound}
Consider Assumptions \ref{ass:so}, \ref{ass:BO}, \ref{ass:seqVCdim}. Suppose we have a sample of size $n$, and let $m = n(1-\ell)$, $r = n - m$. Let $C_m(k)$ be the holdout penalty as defined above. Then we have that for every $P \in \mathcal{P}(M,\kappa)$:
$$E_{P^n}[W^*_\mathcal{G} - W(\hat{G}_m)] \le \inf_k \Big[E_{P^n}[C_n(k)]+ \big(W^*_{\mathcal{G}} - W^*_{\mathcal{G}_{k}}\big) + \sqrt{\frac{k}{n}}\Big]+ g(M,\kappa,\ell)\frac{M}{\kappa\sqrt{\ell}}\sqrt{\frac{1}{n}}~,$$
with $$E_{P^n}[C_n(k)] \le C\frac{M}{\kappa\sqrt{(1-\ell)}}\sqrt{\frac{V_k}{n}}~,$$ where $C$ is the same universal constant as that in equation (\ref{eq:EWMregretbound}) and 
$$g(M,\kappa,\ell) := 2\sqrt{\log\Big(\sqrt{\frac{e}{2\ell}}\frac{M}{\kappa}\Big)}~.$$
\end{proposition}

As we show in the next section, the bound in Proposition \ref{prop:holdoutbound} is similar to what we derive for the Rademacher penalty, but with larger constants which reflect the fact that we split the sample. However, a major benefit of the holdout penalty lies in the fact that it is simple to implement. The only remaining issue is how to split the data. Although we do not study this problem formally, we have found that it is more important to focus on accurate estimation of the rule $\hat{G}_{m,k}$ than on the computation of $W_r(\hat{G}_{m,k})$. In other words, we recommend that the estimating sample $S_n^E$ be a large proportion of the original sample $S_n$. Throughout the rest of the paper we designate three quarters of the sample as the estimating sample.

\subsubsection{The Rademacher Penalty}
The second penalty we present is attractive in that it does not introduce sample splitting, but may be computationally burdensome when compared to the holdout procedure. Let $S_n := \{(Y_i,D_i,X_i)\}_{i=1}^n$ be the observed data. Then the \emph{Rademacher} penalty is given by
$$C_n(k) = E_{\sigma}\Big[\sup_{G \in \mathcal{G}_k} \frac{2}{n}\sum_{i=1}^n \sigma_i \tau_i {\bf 1}\{X_i \in G\}\mid S_n\Big]~,$$
where $\tau_i$ is defined as in equation (\ref{eq:EWMobjective}), and $\{\sigma_1,...,\sigma_n\}$ are a sequence of i.i.d Rademacher variables, i.e. they take on the values $\{-1,1\}$, each with probability half.

To clarify the origin of this penalty, recall that $C_n(k)$ must be a good upper bound on $W_n(\hat{G}_{n,k}) - W(\hat{G}_{n,k})$, which is the requirement of Assumption \ref{ass:Cnk}. Bounding such quantities is common in the study of empirical processes, and the usual first step is to use what is known as \emph{symmetrization}, which gives the following bound:
$$E_{P^n}[\sup_{G\in\mathcal{G}} W_n(G) - W(G)] \le E_{P^n}\Big[E_{\sigma}\big[\sup_{G\in\mathcal{G}}\frac{2}{n}\sum_{i=1}^n \sigma_i \tau_i {\bf 1}\{X_i \in G\}\mid S_n\big]\Big]~.$$
It is thus this inequality that inspires the definition of $C_n(k)$. The concept of Rademacher complexity\footnotemark[4] is pervasive throughout the statistical learning literature \citep[see for example][]{kolt2001, bartlett2002rad, bartlett2002}. Intuitively, it measures a notion of complexity that is finer than that of VC dimension, and is at the same time computable from the data at hand. 

\footnotetext[4]{Note that the definition of Rademacher complexity is slightly different than the definition of our penalty. Here we follow \cite{bartlett2002} and do not include the absolute value in our definition of the penalty.}

First we prove that the conditions of Assumptions \ref{ass:Cnk} and \ref{ass:Cnk2} hold for the Rademacher penalty:

\begin{lemma}\label{lem:rademacher}
Consider Assumptions \ref{ass:so}, \ref{ass:BO}, \ref{ass:seqVCdim}. Let $C_n(k)$ be the Rademacher penalty as defined above. Then we have that 
$$P^n(W_n(\hat{G}_{n,k}) - W(\hat{G}_{n,k}) - C_n(k) > \epsilon) \le \exp\Big(-2\Big(\frac{\kappa}{3M}\Big)^2n\epsilon^2\Big),$$
and $$E_{P^n}[C_n(k)] \le C\frac{M}{\kappa}\sqrt{\frac{V_k}{n}}~,$$
where $C$ is the same universal constant that appears in equation (\ref{eq:EWMregretbound}).
\end{lemma}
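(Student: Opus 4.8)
The plan is to establish the two claims separately, both via the standard symmetrization-plus-concentration machinery from empirical process theory, carefully tracking how the reweighting by $\tau_i$ (rather than indicators as in classification) and the overlap bound enter the constants. First note that under Assumptions \ref{ass:so} and \ref{ass:BO}, the summands $\tau_i\mathbf{1}\{X_i\in G\}$ are bounded in absolute value: since $|Y|\le M/2$ and $e(X)\in[\kappa,1-\kappa]$, we have $|\tau_i|\le \tfrac{M/2}{\kappa}+\tfrac{M/2}{\kappa}=\tfrac{M}{\kappa}$. So each $\tau_i\mathbf{1}\{X_i\in G\}$ lies in $[-M/\kappa,M/\kappa]$, an interval of width $2M/\kappa$. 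Define $Z:=\sup_{G\in\mathcal{G}_k}\big(W_n(G)-W(G)\big)$. For the tail bound, the idea is to combine (i) a bounded-differences (McDiarmid) inequality showing $Z$ concentrates around its mean, with (ii) the symmetrization inequality already quoted in the text, which gives $E_{P^n}[Z]\le E_{P^n}[C_n(k)]$, and (iii) a second bounded-differences argument showing $C_n(k)$ itself concentrates, so that $C_n(k)$ dominates $E_{P^n}[Z]$ with high probability. Writing $W_n(\hat G_{n,k})-W(\hat G_{n,k})\le Z$, the event $\{W_n(\hat G_{n,k})-W(\hat G_{n,k})-C_n(k)>\epsilon\}$ is contained in $\{Z-C_n(k)>\epsilon\}\subseteq\{Z-E_{P^n}[Z]>\epsilon/2\}\cup\{E_{P^n}[C_n(k)]-C_n(k)>\epsilon/2\}$, using $E_{P^n}[Z]\le E_{P^n}[C_n(k)]$; wait — more carefully one writes $Z-C_n(k) = (Z-E[Z]) + (E[Z]-E[C_n(k)]) + (E[C_n(k)]-C_n(k)) \le (Z-E[Z]) + (E[C_n(k)]-C_n(k))$, so the event splits into the two halves each exceeding $\epsilon/2$. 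Each half is controlled by McDiarmid: changing one data point $(Y_i,D_i,X_i)$ changes $W_n(G)-W(G)$ by at most $\tfrac{1}{n}\cdot\tfrac{2M}{\kappa}$ uniformly in $G$, hence changes the supremum $Z$ by at most $\tfrac{2M}{n\kappa}$; and changing one data point changes $C_n(k)$ by at most $\tfrac{2}{n}\cdot\tfrac{2M}{\kappa}=\tfrac{4M}{n\kappa}$ (the factor $2$ from the definition of the Rademacher penalty). Hmm, the stated bound has constant $\kappa/(3M)$ inside, i.e. effective increment $3M/(n\kappa)$; so one should be a bit more economical — e.g. bound $Z$'s increment by $2M/(n\kappa)$ and handle the asymmetry so the combined exponent works out to $2(\kappa/(3M))^2 n\epsilon^2$. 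Concretely, McDiarmid gives $P^n(Z-E[Z]>\epsilon/2)\le \exp(-2(\epsilon/2)^2/(n\cdot(2M/(n\kappa))^2)) = \exp(-n\epsilon^2\kappa^2/(8M^2))$ and similarly $P^n(E[C_n(k)]-C_n(k)>\epsilon/2)\le\exp(-n\epsilon^2\kappa^2/(32M^2))$; summing these two and bounding crudely yields something of the form $\exp(-2(\kappa/(3M))^2 n\epsilon^2)$ after absorbing constants (one may need to verify $2/9 \le$ the relevant combination or simply present the looser explicit constant the lemma claims). The main obstacle here is the bookkeeping to land on exactly the advertised constant rather than an unspecified one — this is routine but fiddly, and the cleanest route is to note $2(\kappa/(3M))^2 = 2\kappa^2/(9M^2)$ and check that $e^{-n\epsilon^2\kappa^2/(8M^2)} + e^{-n\epsilon^2\kappa^2/(32M^2)} \le e^{-2n\epsilon^2\kappa^2/(9M^2)}$ fails in general, so one instead tightens the increment analysis (e.g. use that $W(G)$ is deterministic so only $W_n(G)$ moves, giving increment $M/(n\kappa)$ not $2M/(n\kappa)$, since $\tau_i\mathbf1\{X_i\in G\}$ itself has range $2M/\kappa$ but its \emph{contribution} to the average changes by at most $(2M/\kappa)/n$ — so this is actually the $2M/(n\kappa)$ figure; the genuine saving is that one can use a single McDiarmid application directly on $W_n(\hat G_{n,k})-W(\hat G_{n,k})-C_n(k)$ viewed as one function of the sample).

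For the expectation bound, the plan is: $E_{P^n}[C_n(k)] = E_{P^n}E_\sigma\big[\sup_{G\in\mathcal G_k}\tfrac2n\sum_i\sigma_i\tau_i\mathbf1\{X_i\in G\}\big]$ is exactly (twice) the empirical Rademacher complexity of the function class $\{x\mapsto \tau\,\mathbf1\{x\in G\}: G\in\mathcal G_k\}$. One then invokes the standard chain: a contraction / rescaling argument to pull out the uniform bound $M/\kappa$ on $|\tau_i|$, reducing to the Rademacher complexity of the indicator class $\{\mathbf1\{\cdot\in G\}:G\in\mathcal G_k\}$; then Dudley's entropy integral (or the Haussler bound on covering numbers of VC classes) bounds that by $C'\sqrt{V_k/n}$ for a universal constant. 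Combining gives $E_{P^n}[C_n(k)]\le C\tfrac{M}{\kappa}\sqrt{V_k/n}$; to match the claim that $C$ is the \emph{same} universal constant as in \eqref{eq:EWMregretbound}, one appeals to the fact — which is how that bound itself was proved in \cite{kt2015} — that the EWM regret bound is obtained precisely by bounding $2E_{P^n}[\sup_{G\in\mathcal G}(W_n(G)-W(G))]\le 2E_{P^n}[C_n(k)]$ and then applying the same VC entropy estimate, so the constants coincide by construction. This half is essentially a literature citation plus the observation that $\tau_i$ is bounded by $M/\kappa$; the only mild subtlety is the contraction step, since $\tau_i$ can be negative, but because $\tau_i$ enters linearly (it is $\tau_i$ times an indicator, not a nonlinear function of the indicator), one can simply write $\tau_i\mathbf1\{X_i\in G\} = \tau_i^{+}\mathbf1\{X_i\in G\} - \tau_i^{-}\mathbf1\{X_i\in G\}$ or, more directly, absorb $\tau_i$ into $\sigma_i$ up to its magnitude, using that $\sigma_i\tau_i \stackrel{d}{=} \sigma_i|\tau_i|$ conditionally on the data and then bounding $|\tau_i|\le M/\kappa$ after a standard comparison lemma for Rademacher processes with bounded multipliers (e.g. the multiplier inequality in \cite{van1996}).

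In summary: the structure is (1) establish uniform boundedness of the summands by $M/\kappa$ via overlap and bounded outcomes; (2) for the tail inequality, decompose $W_n(\hat G_{n,k})-W(\hat G_{n,k})-C_n(k)$ as (supremum deviation minus its mean) plus (mean of penalty minus penalty), using symmetrization to kill the cross term, and apply McDiarmid to each piece, tuning the increment bounds to recover the stated constant $(\kappa/(3M))$; (3) for the expectation bound, identify $C_n(k)$ with the empirical Rademacher complexity of a reweighted VC class, use a multiplier/contraction inequality to extract the $M/\kappa$ factor, and invoke the VC entropy integral to get $\sqrt{V_k/n}$ with the universal constant $C$ inherited from the proof of \eqref{eq:EWMregretbound}. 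The hardest part in practice is step (2): getting McDiarmid's bounded-differences constants to combine into \emph{exactly} the claimed exponent, which may require applying the inequality once to the whole random variable $W_n(\hat G_{n,k})-W(\hat G_{n,k})-C_n(k)$ as a single function of the i.i.d. sample (bounded difference $3M/(n\kappa)$: up to $2M/(n\kappa)$ from the supremum term plus up to $4M/(n\kappa)$ from the penalty term is too much, so one must exploit cancellation — e.g. that the worst-case perturbation cannot simultaneously maximize both terms — to bring the effective increment down to $3M/(n\kappa)$) rather than via a union bound over two events.
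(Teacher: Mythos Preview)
Your overall architecture is exactly the paper's: bound $W_n(\hat G_{n,k})-W(\hat G_{n,k})\le Z:=\sup_{G\in\mathcal G_k}(W_n(G)-W(G))$, use symmetrization to get $E[Z]\le E[C_n(k)]$ so that $M_{n,k}:=Z-C_n(k)$ has nonpositive mean, and then apply McDiarmid \emph{once} to $M_{n,k}$. You even identify this single-application route at the end of your write-up. What prevents you from landing on the constant is a factor-of-two slip in bounding $|\tau_i|$: since $D_i\in\{0,1\}$, exactly one of the two fractions in $\tau_i$ is nonzero, so $|\tau_i|\le \tfrac{M/2}{\kappa}=\tfrac{M}{2\kappa}$, not $M/\kappa$. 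With this correction, changing one observation moves $W_n(G)$ by at most $M/(n\kappa)$ (range of each summand is $M/\kappa$), hence $Z$ by at most $M/(n\kappa)$; and it moves $C_n(k)$ by at most $2M/(n\kappa)$ because of the factor $2$ in the penalty. The bounded-difference constant for $M_{n,k}$ is therefore $M/(n\kappa)+2M/(n\kappa)=3M/(n\kappa)$, which delivers the advertised exponent $2(\kappa/(3M))^2n\epsilon^2$ directly---no union bound, no ``cancellation'' argument needed. Note also that the single McDiarmid should be applied to $M_{n,k}=Z-C_n(k)$, not to $W_n(\hat G_{n,k})-W(\hat G_{n,k})-C_n(k)$ as you write at the very end; the latter involves the data-dependent maximizer and its bounded-difference structure is murkier.

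For the expectation bound your plan is correct but slightly more elaborate than needed. The paper does not go through a contraction/multiplier step; instead it observes (their Lemma~A.1, restated here as Lemma~\ref{lem:VCsbgf}) that $\{z\mapsto \tau\,\mathbf 1\{x\in G\}:G\in\mathcal G_k\}$ is itself a VC-subgraph class of dimension at most $V_k$, with envelope $M/(2\kappa)$, and then quotes the chaining/entropy bound for VC-subgraph classes (Lemma~A.4 of \cite{kt2015}) to get $E[C_n(k)]\le C\tfrac{M}{\kappa}\sqrt{V_k/n}$. This is why the constant $C$ coincides with the one in~\eqref{eq:EWMregretbound}: both invoke the same lemma on the same class.
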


We are thus able to refine Theorem \ref{thm:regretbound} to the case of the Rademacher penalty.

\begin{proposition}\label{prop:rademacherbound}
Consider Assumptions \ref{ass:so}, \ref{ass:BO}, \ref{ass:seqVCdim}. Let $C_n(k)$ be the Rademacher penalty as defined above. Then we have that for every $P \in \mathcal{P}(M,\kappa)$:
$$E_{P^n}[W^*_\mathcal{G} - W(\hat{G}_n)] \le \inf_k \Big[E_{P^n}[C_n(k)] + \big(W^*_{\mathcal{G}} - W^*_{\mathcal{G}_{k}}\big) + \sqrt{\frac{k}{n}}\Big]+ g(M,\kappa)\frac{M}{\kappa}\sqrt{\frac{1}{n}}~,$$
with $E_{P^n}[C_n(k)] \le C\frac{M}{\kappa}\sqrt{\frac{V_k}{n}}$, where $C$ is the same universal constant as that in equation (\ref{eq:EWMregretbound}) and 
$$g(M,\kappa) := 6\sqrt{\log\Big(\frac{3\sqrt{e}}{\sqrt{2}}\frac{M}{\kappa}\Big)}~.$$
\end{proposition}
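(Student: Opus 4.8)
The plan is to read Proposition \ref{prop:rademacherbound} off as a specialization of Theorem \ref{thm:regretbound} to the Rademacher penalty, the only genuine content being to make the abstract constants $c_0$ and $\Delta$ of that theorem explicit in terms of $M$ and $\kappa$. First I would invoke Lemma \ref{lem:rademacher}: under Assumptions \ref{ass:so}, \ref{ass:BO}, \ref{ass:seqVCdim} it establishes that the Rademacher penalty satisfies the tail bound of Assumption \ref{ass:Cnk} with $c_1 = 1$ and $c_0 = (\kappa/(3M))^2$, and that $E_{P^n}[C_n(k)] \le C\frac{M}{\kappa}\sqrt{V_k/n}$ (which is already the second displayed inequality of the proposition, and in particular shows Assumption \ref{ass:Cnk2} holds with $C_1 = CM/\kappa$). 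So the hypotheses of Theorem \ref{thm:regretbound} are met with $t_k = k$, and that theorem gives
$$E_{P^n}[W^*_\mathcal{G} - W(\hat{G}_n)] \le \inf_k \Big[E_{P^n}[C_n(k)]+ \big(W^*_{\mathcal{G}} - W^*_{\mathcal{G}_{k}}\big) + \sqrt{\tfrac{k}{n}}\Big]+ \sqrt{\tfrac{\log(\Delta e)}{2c_0 n}}$$
for the constant $\Delta$ built in its proof.

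It then remains to bound the last term. Substituting $c_0 = (\kappa/(3M))^2$ turns it into $\frac{3M}{\kappa}\sqrt{\log(\Delta e)/(2n)}$, so I would track how $\Delta$ arises in the proof of Theorem \ref{thm:regretbound}: it comes from a union bound over $k$ of the Assumption \ref{ass:Cnk} tail events shifted by the penalties $\sqrt{t_k/n}$, which contributes a factor $c_1\sum_k e^{-2c_0 t_k}$. With $t_k = k$ and $c_1 = 1$ this is the geometric series $(e^{2c_0}-1)^{-1}$, and $e^x - 1 \ge x$ together with $c_0 = (\kappa/(3M))^2$ gives $\Delta \le \frac{1}{2c_0} = \frac{9M^2}{2\kappa^2}$, hence $\Delta e \le \frac{9eM^2}{2\kappa^2} = \big(\tfrac{3\sqrt e}{\sqrt 2}\tfrac{M}{\kappa}\big)^2$. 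Plugging this in, the extraneous term is at most $\frac{3M}{\kappa}\sqrt{\log(\tfrac{3\sqrt e}{\sqrt 2}\tfrac{M}{\kappa})/n}$, which is dominated by $g(M,\kappa)\frac{M}{\kappa}\sqrt{1/n}$ for $g(M,\kappa) = 6\sqrt{\log(\tfrac{3\sqrt e}{\sqrt 2}\tfrac{M}{\kappa})}$; combining with the first display completes the proof.

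The main obstacle — really the only non-mechanical step — is the bookkeeping around $\Delta$: one must confirm from the proof of Theorem \ref{thm:regretbound} that $\Delta$ depends on $\{t_k\}$ only through $\sum_k e^{-2c_0 t_k}$ up to a harmless multiplicative constant, so that $t_k = k$ collapses it to a convergent geometric series controlled purely by $c_0$, and hence by $M/\kappa$; the cushion between the ``$3$'' that this computation naturally produces and the ``$6$'' in the stated $g$ is exactly there to absorb that multiplicative constant. A minor caveat is that $\log(\Delta e)$ must be nonnegative for $g$ to be real, i.e.\ $\tfrac{3\sqrt e}{\sqrt 2}\tfrac{M}{\kappa} \ge 1$; this is implicit in the statement and holds in the regime of interest (e.g.\ whenever $M$ is bounded below by an absolute constant, since $\kappa < 1/2$). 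Everything else, including the universal constant $C$, is imported directly from Lemma \ref{lem:rademacher}.
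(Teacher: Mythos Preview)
Your proposal is correct and follows essentially the same route as the paper: invoke Lemma \ref{lem:rademacher} to read off $c_1=1$ and $c_0=(\kappa/(3M))^2$, bound the geometric series $\Delta=\sum_k e^{-2kc_0}$ by $1/(2c_0)$ via the elementary inequality $e^x-1\ge x$ (the paper states the equivalent form $e^{-x}/(1-e^{-x})\le 1/x$), and substitute into Theorem \ref{thm:regretbound}. Your computation in fact produces the constant $3$ rather than $6$; the paper's proof is terse and does not explain the extra factor of $2$, so the ``cushion'' you identify is simply slack in the stated $g(M,\kappa)$, not something needed to absorb an additional constant.
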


\begin{remark}\label{rem:tt1}
In Appendix B we perform a back-of-the-envelope calculation that provides insight into the size of $g(M,\kappa)$, and compares it to the size of the universal constant $C$ derived in \cite{kt2015}. \end{remark}

\subsection{Penalized Welfare Maximization: Estimated Propensity Score}\label{ss:PWMe}

In this section we present a modification of the PWM rule where the propensity score is not known and must be estimated from the data. This situation would arise if the planner had access to observational data instead of data from a randomized experiment. Before describing our modification of the PWM rule, we first review results about the corresponding modification of the EWM rule. The modification we consider here is what \cite{kt2015} call the \emph{e-hybrid} EWM rule. Recall the EWM objective function as defined in equation (\ref{eq:EWMobjective}). To define the e-hybrid EWM rule we modify this objective function by replacing $\tau_i$ with
$$\hat{\tau}_i := \Big[\frac{Y_iD_i}{\hat{e}(X_i)} - \frac{Y_i(1-D_i)}{1-\hat{e}(X_i)}\Big]{\bf 1}\{\epsilon_n\le\hat{e}(X_i)\le1-\epsilon_n\}~,$$
where $\hat{e}(\cdot)$ is an estimator of the propensity score, and $\epsilon_n$ is a trimming parameter such that $\epsilon_n = O(n^{-\alpha})$ for some $\alpha > 0$. The e-hybrid EWM objective function is defined as follows:
$$W^e_n(G):=\frac{1}{n}\sum_{i=1}^n\hat{\tau}_i{\bf 1}\{X_i \in G\}~.$$

Since we are now estimating the propensity score, we must impose additional regularity conditions on $P$ to guarantee a uniform rate of convergence. We impose the following high level assumption:

\begin{assumption}\label{ass:ereg}
Given an estimator $\hat{e}(\cdot)$, let $\mathcal{P}_e$ be a class of data generating processes such that
$$\sup_{P\in\mathcal{P}_e} E_{P^n}\Big[\frac{1}{n}\sum_{i=1}^n|\hat{\tau}_i - \tau_i|\Big] = O(\phi_n^{-1})~,$$
where $\phi_n \rightarrow \infty$. 
\end{assumption}

Although we do not explore low-level conditions that satisfy this assumption here, \cite{kt2015} do so in their paper. Let $\hat{G}_{e-hybrid}$ be the solution to the e-hybrid problem in a class $\mathcal{G}$ of finite VC dimension, then \cite{kt2015} derive the following bound on maximum $\mathcal{G}$-regret:

\[\sup_{P \in \mathcal{P}_e \cap \mathcal{P}(M,\kappa)} E_{P^n}\Big[W^*_{\mathcal{G}} - W(\hat{G}_{e-hybrid})\Big] \le O(\phi_n^{-1} \vee n^{-1/2})~.\]
With a non-parametric estimator of $e(\cdot)$, $\phi_n$ will generally be slower than $\sqrt{n}$ and hence determine the rate of convergence. In a recent paper, \cite{athey2017} argue that more sophisticated estimators of the welfare objective can improve performance relative to the e-hybrid rule, and derive corresponding bounds on the maximum regret of their procedure. Importantly, by exploiting an orthogonal moments construction, the procedure in \cite{athey2017} converges at a $\sqrt{n}$-rate even when the propensity score is estimated non-parametrically. Modifying our method using their techniques would be an interesting direction for future work. 

We now present the construction of the corresponding e-hybrid PWM estimator. Let $\mathcal{G}$ be an arbitrary class of allocations, and let $\{\mathcal{G}_k\}_k$ be some approximating sequence for $\mathcal{G}$. Let $\hat{G}^e_{n,k}$ be the hybrid EWM rule in the class $\mathcal{G}_k$. Let $C^e_n(k)$ be our penalty for the hybrid PWM rule. We require that the penalty satisfies the following properties:

\begin{assumption}\label{ass:Cnke}(Assumptions on $C^e_n(k)$)\newline
In addition to making assumptions about $C^e_n(k)$, we assume there exists an ``infeasible penalty" $\tilde{C}_n(k)$ with the following properties:
\begin{itemize}
\item There exist positive constants $c_0$ and $c_1$ such that $\tilde{C}_n(k)$ satisfies the following tail inequality for every $n$, $k$ and for every $\epsilon > 0$:
$$\sup_{P \in \mathcal{P}_e\cap\mathcal{P}(M,\kappa)} P^n(W_n(\hat{G}^e_{n,k}) - W(\hat{G}^e_{n,k}) - \tilde{C}_{n}(k) > \epsilon) \le c_1e^{-2c_0n\epsilon^2}$$
\item There exists a positive constant $C_1$ such that, for every $n$, $\tilde{C}_n(k)$ satisfies
$$\sup_{P \in \mathcal{P}_e\cap\mathcal{P}(M,\kappa)} E_{P^n}[\tilde{C}_n(k)] \le C_1\sqrt{\frac{V_k}{n}}~,$$
where $V_k$ is the VC dimension of $\mathcal{G}_k$.
\item $\tilde{C}_n(k)$ and $C_n^e(k)$ are such that
$$\sup_{P \in \mathcal{P}_e\cap\mathcal{P}(M,\kappa)} E_{P^n}\left[\sup_k\left|C_n^e(k) - \tilde{C}_n(k)\right|\right] = O(\phi^{-1}_n)~.$$
\end{itemize}
\end{assumption}

Note that we have introduced an object $\tilde{C}_n(k)$ which we call an \emph{infeasible penalty}. The first bullet point asserts that the infeasible penalty obeys a similar tail inequality to $C_n(k)$, except that $\tilde{C}_n(k)$ satisfies this assumption with respect to the \emph{e-hybrid} EWM rule. However, we evaluate the hybrid rule through the empirical objective $W_n(\cdot)$, \emph{which is the objective when the propensity score is known}. This is our motivation for calling $\tilde{C}_n(k)$ an infeasible penalty. Luckily, $\tilde{C}_n(k)$ is purely a theoretical device and does not serve a role in the actual implementation of PWM. We provide an example of such an infeasible penalty in the setting of the holdout penalty below.
The second bullet point is the same as Assumption \ref{ass:Cnk2}, but now with respect to the infeasible penalty $\tilde{C}_n(k)$. The third bullet simply links the true penalty $C_n^e(k)$ to the infeasible penalty $\tilde{C}_n(k)$ in such a way that both should agree asymptotically and do so at an appropriate rate.

Given this, we obtain the following analogue to Theorem \ref{thm:regretbound}:
\begin{theorem}\label{thm:eregretbound}
Given assumptions \ref{ass:so}, \ref{ass:BO}, \ref{ass:seqVCdim}, \ref{ass:ereg} and \ref{ass:Cnke}, there exist constants $\Delta$ and $c_0$ such that for every $P \in \mathcal{P}_e \cap \mathcal{P}(M,\kappa)$:
$$E_{P^n}[W^*_\mathcal{G} - W(\hat{G}^e_n)] \le \inf_k \Big[E_{P^n}[\tilde{C}_n(k)] + \big(W^*_{\mathcal{G}} - W^*_{\mathcal{G}_{k}}\big) + \sqrt{\frac{k}{n}}\Big]+ O(\phi^{-1}_n) + \sqrt{\frac{\log(\Delta e)}{2c_0n}}~.$$
\end{theorem}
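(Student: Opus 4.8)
\emph{Proof plan.} The e-hybrid PWM rule is
$$\hat{G}^e_n = \arg\max_k\Big[W^e_n(\hat{G}^e_{n,k}) - C^e_n(k) - \sqrt{t_k/n}\Big]~,\qquad t_k = k~,$$
and the plan is to run the same oracle-inequality argument that underlies Theorem \ref{thm:regretbound}, with two modifications: (i) wherever the known-propensity proof manipulates $W_n$ and $C_n(k)$, we first replace the feasible objects $W^e_n$ and $C^e_n(k)$ by the infeasible ones $W_n$ and $\tilde{C}_n(k)$, paying a uniform error each time; and (ii) we track those errors and show they aggregate to $O(\phi_n^{-1})$. The point of the ``infeasible penalty'' device is exactly that $\tilde{C}_n(k)$ is calibrated to $W_n$, so the concentration step becomes \emph{literally} the known-propensity step.

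First I would record the two uniform comparisons that make (i) work. Since $W^e_n(G) - W_n(G) = \frac{1}{n}\sum_{i=1}^n(\hat{\tau}_i - \tau_i){\bf 1}\{X_i \in G\}$, we have $\sup_{G}|W^e_n(G) - W_n(G)| \le \Lambda_n$ where $\Lambda_n := \frac{1}{n}\sum_{i=1}^n|\hat{\tau}_i - \tau_i|$, and by Assumption \ref{ass:ereg}, $\sup_{P}E_{P^n}[\Lambda_n] = O(\phi_n^{-1})$; likewise, writing $\Xi_n := \sup_k|C^e_n(k) - \tilde{C}_n(k)|$, the third bullet of Assumption \ref{ass:Cnke} gives $\sup_{P}E_{P^n}[\Xi_n] = O(\phi_n^{-1})$ (sups over $\mathcal{P}_e\cap\mathcal{P}(M,\kappa)$). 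Next, set $\tilde{Z}_k := W_n(\hat{G}^e_{n,k}) - W(\hat{G}^e_{n,k}) - \tilde{C}_n(k)$ and $\tilde{U} := \sup_k[\tilde{Z}_k - \sqrt{t_k/n}]$. By the first bullet of Assumption \ref{ass:Cnke}, $\tilde{Z}_k$ obeys the same exponential tail bound that $Z_k$ obeyed in the proof of Theorem \ref{thm:regretbound}, so the identical union-bound computation — $P^n(\tilde{U}>\epsilon)\le\sum_k c_1e^{-2c_0n(\epsilon+\sqrt{t_k/n})^2}$, then $(\epsilon+\sqrt{t_k/n})^2\ge\epsilon^2+t_k/n$, $t_k=k$, and $\sum_k e^{-2c_0 k}<\infty$ — yields $\sup_P P^n(\tilde{U}>\epsilon)\le\Delta e^{-2c_0n\epsilon^2}$ and hence $\sup_P E_{P^n}[\tilde{U}_+]\le\sqrt{\log(\Delta e)/(2c_0n)}$.

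For the oracle inequality, fix any $k^*$ and any $\eta>0$, and pick $G^*_{k^*}\in\mathcal{G}_{k^*}$ with $W(G^*_{k^*})>W^*_{\mathcal{G}_{k^*}}-\eta$ (the sup need not be attained; this $\eta$-approximation is what lets us sidestep the measurability concerns of footnote 2). Let $\hat{k}$ be the index selected by $\hat{G}^e_n$. From $W^e_n(\hat{G}^e_{n,\hat{k}}) - C^e_n(\hat{k}) - \sqrt{t_{\hat{k}}/n} \ge W^e_n(\hat{G}^e_{n,k^*}) - C^e_n(k^*) - \sqrt{t_{k^*}/n}$ I would bound the left side above by $W(\hat{G}^e_{n,\hat{k}}) + \tilde{U} + \Lambda_n + \Xi_n$ (swap $W^e_n\to W_n$ and $C^e_n\to\tilde{C}_n$, then use the definitions of $\tilde{Z}_{\hat{k}}$ and $\tilde{U}$), and bound the right side below, using $W^e_n(\hat{G}^e_{n,k^*})\ge W^e_n(G^*_{k^*})\ge W_n(G^*_{k^*})-\Lambda_n$ and $-C^e_n(k^*)\ge-\tilde{C}_n(k^*)-\Xi_n$, by $W^*_{\mathcal{G}_{k^*}}-\eta+[W_n(G^*_{k^*})-W(G^*_{k^*})]-\Lambda_n-\tilde{C}_n(k^*)-\Xi_n-\sqrt{t_{k^*}/n}$. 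Rearranging gives
$$W^*_{\mathcal{G}_{k^*}} - W(\hat{G}^e_n) \le \tilde{U} + 2\Lambda_n + 2\Xi_n - \big[W_n(G^*_{k^*}) - W(G^*_{k^*})\big] + \tilde{C}_n(k^*) + \sqrt{t_{k^*}/n} + \eta~.$$

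Finally, take $E_{P^n}$ and let $\eta\downarrow0$: since $G^*_{k^*}$ is nonrandom and $e(\cdot)$ is the true propensity, $E_{P^n}[W_n(G^*_{k^*})]=W(G^*_{k^*})$, so that term vanishes; $E_{P^n}[\tilde{U}_+]\le\sqrt{\log(\Delta e)/(2c_0n)}$; and $E_{P^n}[2\Lambda_n+2\Xi_n]=O(\phi_n^{-1})$ uniformly on $\mathcal{P}_e\cap\mathcal{P}(M,\kappa)$. Adding $W^*_{\mathcal{G}}-W^*_{\mathcal{G}_{k^*}}$ to both sides, using $t_k=k$, and taking the infimum over $k^*$ (the $O(\phi_n^{-1})$ and $\sqrt{\log(\Delta e)/(2c_0n)}$ terms do not depend on $k^*$) delivers the claim. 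The main obstacle is just the bookkeeping in (i)–(ii): ensuring every replacement of a feasible object by its infeasible counterpart is dominated by the \emph{same} $O(\phi_n^{-1})$ quantity and that these discrepancies enter only additively — which is precisely what Assumptions \ref{ass:ereg} and \ref{ass:Cnke} are engineered to guarantee — with the only genuinely delicate point being the $\eta$-argument for the possibly-unattained $W^*_{\mathcal{G}_{k^*}}$.
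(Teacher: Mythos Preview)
Your proposal is correct and follows essentially the same approach as the paper's proof: both swap the feasible objects $W^e_n,C^e_n$ for the infeasible $W_n,\tilde{C}_n$ at a cost controlled by $\Lambda_n$ and $\Xi_n$ (each $O(\phi_n^{-1})$ via Assumptions \ref{ass:ereg} and \ref{ass:Cnke}), then invoke the union-bound/tail argument of Lemma \ref{lemma:tailbound} on the infeasible quantity $\tilde{U}$, and kill the $W_n(G^*_{k^*})-W(G^*_{k^*})$ term by taking expectations. The only cosmetic difference is that the paper organizes the computation by decomposing through $R^e_n(\hat{f}^e_n)$ and converts $E[C^e_n(k)]$ to $E[\tilde{C}_n(k)]$ at the end, whereas you start directly from the optimality inequality at $\hat{k}$ and land on $\tilde{C}_n(k^*)$ immediately; the substantive steps and the resulting constants are the same.
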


As we can see, the only difference between this bound and the bound derived in Theorem \ref{thm:regretbound} is that there is an additional term of order $\phi^{-1}_n$. 

Next, we check that the conditions in Assumption \ref{ass:Cnke} are satisfied with a modified version of the holdout penalty (the results for the Rademacher penalty follow similarly). To define the \emph{hybrid holdout} penalty, let $\hat{e}^E(\cdot)$ be the propensity estimated on $S_n^E$, and let $\hat{e}^T(\cdot)$ be the propensity estimated on $S_n^T$. Define
$$W_m^e(G) := \frac{1}{m}\sum_{i=1}^m\hat{\tau_i}^E{\bf 1}\{X_i \in G\}~,$$
where 
$$\hat{\tau_i}^E = \Big[\frac{Y_iD_i}{\hat{e}^E(X_i)} - \frac{Y_i(1-D_i)}{1-\hat{e}^E(X_i)}\Big]{\bf 1}\{\epsilon_n\le\hat{e}^E(X_i)\le1-\epsilon_n\}~.$$
Define $W_r^e(G)$ on the testing sample analogously. Letting $\hat{G}^e_{m,k}$ be the hybrid EWM rule computed on the estimating sample in the class $\mathcal{G}_k$, the hybrid holdout penalty is defined as:
$$C_m^e(k) := W_m^e(\hat{G}^e_{m,k}) - W_r^e(\hat{G}^e_{m,k})~.$$

We must also assert the existence of an infeasible penalty $\tilde{C}_m(k)$ that satisfies our assumptions. The infeasible penalty we consider is given by
$$\tilde{C}_m(k) := W_m(\hat{G}^e_{m,k}) - W_r(\hat{G}^e_{m,k})~,$$
where $W_m(\cdot)$ and $W_r(\cdot)$ are defined as in Section \ref{ss:PWMpen}, that is, they are computed \emph{as if the propensity score were known}. Lemma \ref{lem:holdout} verifies Assumption \ref{ass:Cnke} for the hybrid holdout penalty:

\begin{lemma}\label{lem:eholdout}
Assume Assumptions \ref{ass:so}, \ref{ass:BO}, \ref{ass:seqVCdim}, and \ref{ass:ereg}. Suppose we have a sample of size $n$ and recall that $m = n(1-\ell)$ and $r = n - m$. Let $C^e_m(k)$ be the hybrid holdout penalty and $\tilde{C}_m(k)$ be the infeasible penalty as defined above. Then we have that 
$$P^n(W_m(\hat{G}^e_{m,k}) - W(\hat{G}^e_{m,k}) - \tilde{C}_m(k) > \epsilon) \le \exp\Big(-2\Big(\frac{\kappa}{M}\Big)^2n\ell\epsilon^2\Big),$$
$$E_{P^n}[\tilde{C}_m(k)] \le C\frac{M}{\kappa\sqrt{(1-\ell)}}\sqrt{\frac{V_k}{n}}~,$$
and $$\sup_{P \in \mathcal{P}_e}E_{P^n}\big[\sup_k|C^e_m(k) - \tilde{C}_m(k)|\big] = O(\phi^{-1}_n)~,$$
where $C$ is the same universal constant as that in equation (\ref{eq:EWMregretbound}).
\end{lemma}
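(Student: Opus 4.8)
The plan is to establish the three displayed inequalities in turn. The first two are essentially the statements of Lemma~\ref{lem:holdout} with the rule $\hat{G}_{m,k}$ replaced by the hybrid rule $\hat{G}^e_{m,k}$, while the third is the genuinely new ingredient. The structural fact I rely on throughout is that $\hat{G}^e_{m,k}$ is a measurable function of the estimating sample $S_n^E$ alone, and that $\hat{G}^e_{m,k}\in\mathcal{G}_k$; the arguments use nothing else about how the hybrid EWM rule is selected.

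For the first inequality, I would substitute $\tilde{C}_m(k)=W_m(\hat{G}^e_{m,k})-W_r(\hat{G}^e_{m,k})$, which cancels the $W_m(\hat{G}^e_{m,k})$ term and reduces the event to $\{W_r(\hat{G}^e_{m,k})-W(\hat{G}^e_{m,k})>\epsilon\}$. Conditioning on $S_n^E$ freezes $\hat{G}^e_{m,k}$, so that $W_r(\hat{G}^e_{m,k})=\frac{1}{r}\sum_{i=m+1}^n\tau_i\mathbf{1}\{X_i\in\hat{G}^e_{m,k}\}$ is an average of $r=n\ell$ i.i.d.\ terms, each bounded in absolute value by $M/(2\kappa)$ under Assumption~\ref{ass:BO}, with conditional mean $W(\hat{G}^e_{m,k})$. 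A one-sided Hoeffding inequality gives the conditional bound $\exp(-2n\ell(\kappa/M)^2\epsilon^2)$, and integrating out $S_n^E$ removes the conditioning. For the second inequality, independence of the testing sample from $S_n^E$ yields $E_{P^n}[W_r(\hat{G}^e_{m,k})\mid S_n^E]=W(\hat{G}^e_{m,k})$, hence $E_{P^n}[\tilde{C}_m(k)]=E_{P^n}[W_m(\hat{G}^e_{m,k})-W(\hat{G}^e_{m,k})]\le E_{P^n}[\sup_{G\in\mathcal{G}_k}(W_m(G)-W(G))]$; symmetrization together with the VC bound on the Rademacher complexity of $\mathcal{G}_k$ (the very estimate behind Lemma~\ref{lem:rademacher} and~(\ref{eq:EWMregretbound}), now applied to $m$ observations) bounds this by $C\frac{M}{\kappa}\sqrt{V_k/m}=C\frac{M}{\kappa\sqrt{1-\ell}}\sqrt{V_k/n}$.

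For the third inequality, I would write $C^e_m(k)-\tilde{C}_m(k)=\big[W^e_m(\hat{G}^e_{m,k})-W_m(\hat{G}^e_{m,k})\big]-\big[W^e_r(\hat{G}^e_{m,k})-W_r(\hat{G}^e_{m,k})\big]$. Since $W^e_m(G)-W_m(G)=\frac{1}{m}\sum_{i=1}^m(\hat{\tau}_i^E-\tau_i)\mathbf{1}\{X_i\in G\}$ and the indicators are bounded by $1$, the first bracket is at most $\frac{1}{m}\sum_{i=1}^m|\hat{\tau}_i^E-\tau_i|$ uniformly over $G$, hence uniformly over $k$; likewise the second bracket is at most $\frac{1}{r}\sum_{i=m+1}^n|\hat{\tau}_i^T-\tau_i|$. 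Taking expectations and invoking Assumption~\ref{ass:ereg} on the estimating sample (size $m$) and on the testing sample (size $r$), each a fixed fraction of $n$ so that $\phi_m^{-1}$ and $\phi_r^{-1}$ are $O(\phi_n^{-1})$, yields the claimed $O(\phi_n^{-1})$ rate.

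The step requiring the most care is the first inequality: the clean Hoeffding argument works only because $\hat{G}^e_{m,k}$ is built from $S_n^E$ and is therefore non-random once we condition on $S_n^E$, which is exactly what turns the testing-sample terms into an i.i.d.\ average with mean $W(\hat{G}^e_{m,k})$; without the sample split one would be forced into a uniform deviation bound over $\mathcal{G}_k$, and the constants would deteriorate. The only other point needing attention is the bookkeeping in the third step, where $\hat{e}^E$ and $\hat{e}^T$ are fit on subsamples rather than on the full sample, so Assumption~\ref{ass:ereg} must be invoked at sample sizes $m$ and $r$, with a check that passing to a constant fraction of the data does not change the order of the rate.
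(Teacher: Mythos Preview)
Your proposal is correct and follows essentially the same approach as the paper. The paper's proof is terser---it simply states that the first two inequalities ``follow from previous arguments'' (i.e., the proof of Lemma~\ref{lem:holdout} with $\hat{G}_{m,k}$ replaced by $\hat{G}^e_{m,k}$), and for the third inequality bounds $|\tilde{C}_m(k)-C^e_m(k)|\le \frac{1}{m}\sum_{i=1}^m|\hat{\tau}_i^E-\tau_i|+\frac{1}{r}\sum_{i=m+1}^n|\hat{\tau}_i^T-\tau_i|$ and invokes Assumption~\ref{ass:ereg}---exactly as you do, though the paper does not flag the subsample-size issue you correctly note.
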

We thus obtain an analogous result to Proposition \ref{prop:holdoutbound} for PWM with the hybrid holdout penalty.

\section{An Application using Monotone Allocations}\label{sec:application}
In this section we apply the PWM rule to the sieve we constructed in Example \ref{ex:aseq3} for monotone allocations. First, we apply our method to experimental data from the Job Training Partnership Act (JTPA) Study. Then, we derive bounds on maximum regret in a setting where our class has infinite VC dimension.  

The JTPA study was a randomized controlled trial whose purpose was to measure the benefits and costs of employment and training programs. The study randomized whether applicants would be eligible to receive a collection of services provided by the JTPA related to job training, for a period of 18 months. The study collected background information about the applicants prior to the experiment, as well as data on applicants' earnings for 30 months following assignment \citep[for a detailed description of the study, see][]{bloom1997}.\footnotemark[5] 

\footnotetext[5]{The sample we use is the same as that in \cite{abadie2013}, which we downloaded from \href{https://ideas.repec.org/c/boc/bocode/s457801.html}{ideas.repec.org/c/boc/bocode/s457801.html}. We supplemented this dataset with education data from the \texttt{expbif.dta} dataset available at the W.E. Upjohn Institute website. Observations with years of education coded as `99' were dropped.}

We revisit the setup in \cite{kt2015}, which has frequently been considered in recent related papers. The outcome that we consider is total individual earnings in the 30 months following program assignment. The covariates on which we define our treatment allocations are the individual's years of education and their earnings in the year prior to the assignment. The set of allocations we consider is the set of monotone allocations defined in Example \ref{ex:treatalloc3}, but with a \emph{non-increasing} monotone function. To be precise, let $\mathcal{X}_1$ be the covariate set of years of education, and let $\mathcal{X}_2$ be the covariate set of previous earnings, then the set of allocations we consider is given by:
$$\mathcal{G} = \big\{G : G = \{(x_1,x_2) \in \mathcal{X} | \hspace{1mm} x_2 \le f(x_1) \hspace{1mm} \text{for} \hspace{1mm} f:\mathcal{X}_1 \rightarrow \mathcal{X}_2 \hspace{1mm} \text{non-increasing}\} \big\}~.$$
In the context of this application, this restriction imposes that, the less education you have, the more accessible is the program based on your previous earnings. It is plausible that such a restriction may be exogenously imposed on the planner for political reasons; after all, it may not be politically viable to implement a job-training program where only those with high levels of education or income are accepted. 

The approximating sequence we consider is the one described in Example \ref{ex:aseq3}, but now with a non-increasing monotonicity constraint. Recall that this was a sequence such that the resulting allocations partitioned the covariate space with a progressively refined, piecewise linear, monotone boundary. Given any fixed class in this sequence, we can perform EWM in that class. Figure \ref{fig:EWM1} below illustrates the result of performing EWM on the simplest class in the approximating sequence. This class is equivalent to the class of linear treatment rules from \cite{kt2015}, but with an additional slope constraint. Appendix C describes the computational details of our implementation.

\begin{figure}[!ht]
\includegraphics[scale=0.75]{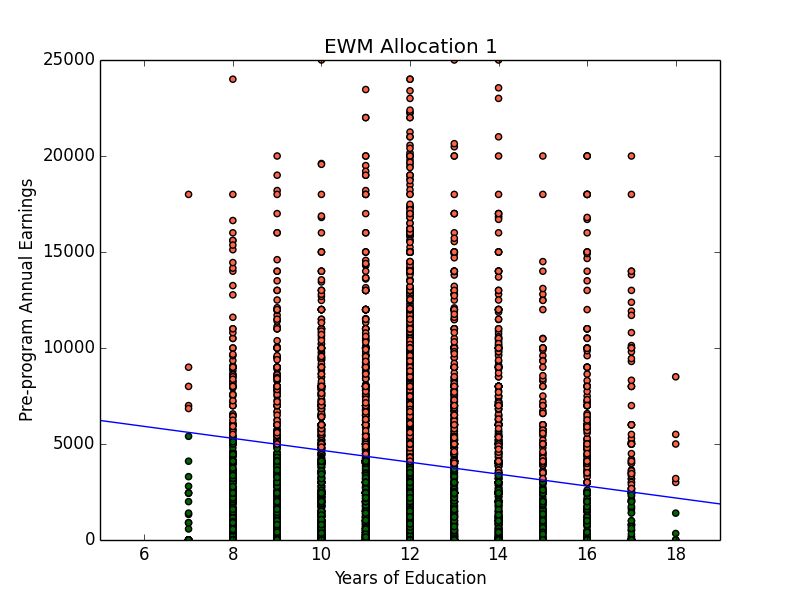} 
\centering
\captionsetup{justification = centering} 
\caption{The resulting treatment allocation from performing EWM in $\mathcal{G}_1$.\\
Each point represents a covariate pair in the sample. The lower left region (dark, green) is the prescribed treatment region, the upper right region (light, red) is the prescribed control region. \label{fig:EWM1}}
\end{figure}

At the other end of the spectrum, we could consider performing EWM in the most complicated class in our approximating sequence: this class corresponds to allocations that stipulate a threshold for previous income at every level of education (note that such a class exists here because years of education is discrete with finite support). Figure \ref{fig:EWM5} below illustrates the result of performing EWM in this class.

\begin{figure}[!ht]
\includegraphics[scale=0.75]{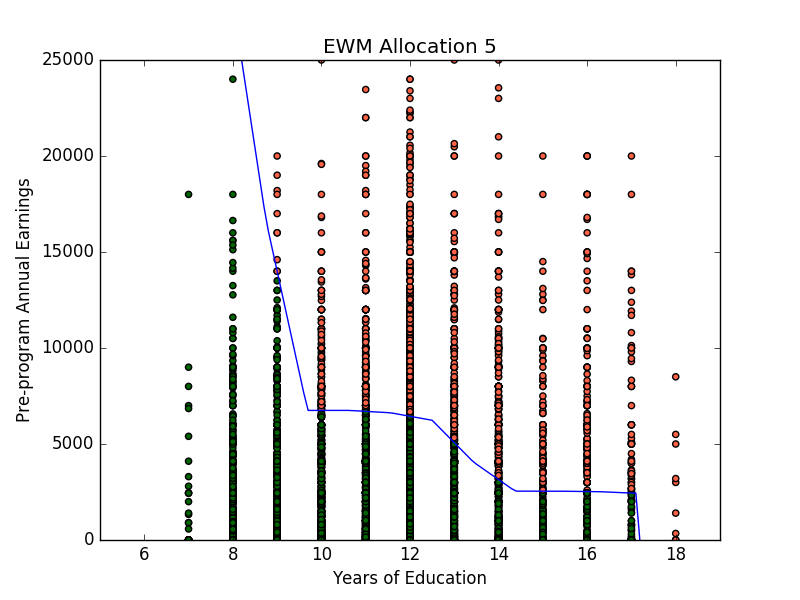} 
\centering
\captionsetup{justification = centering} 
\caption{The resulting treatment allocation from performing EWM in $\mathcal{G}_5$.\\
Each point represents a covariate pair in the sample. The lower left region (dark, green) is the prescribed treatment region, the upper right region (light, red) is the prescribed control region.  \label{fig:EWM5}}
\end{figure}

As we can see, the resulting allocation in the simplest class and in the most complicated class look quite different, and given the option to choose any class from our sequence, it is not obvious which one should be chosen given the size of the experiment. Given that we have a finite sieve in this application, we can view the use of PWM in this context through the lenses of Corollaries \ref{cor:regretpt} or \ref{cor:knownclass}. In Figure \ref{fig:PWM}, we illustrate the result of performing PWM on our sequence of classes, where we used $3/4$ of our sample for estimation. Note that PWM selects the allocation from the second class in our sequence, which corresponds to a piecewise-linear allocation with one possible ``kink". 

\begin{figure}[!ht]
\includegraphics[scale=0.75]{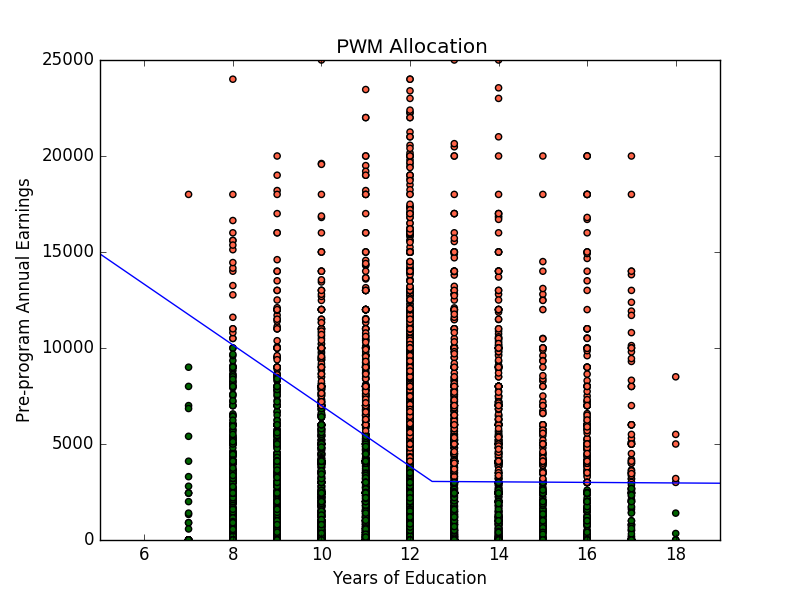} 
\centering
\captionsetup{justification = centering} 
\caption{The resulting treatment allocation from performing PWM on the approximating sequence $\{\mathcal{G}_k\}_{k=1}^5$. 
Each point represents a covariate pair in the sample. The lower left region (dark, green) is the prescribed treatment region, the upper right region (light, red) is the prescribed control region.  \label{fig:PWM}}
\end{figure}

\begin{remark}\label{rem:holdout_test}
In Appendix B we perform a sample splitting exercise to estimate the welfare performance of various decision rules on the JTPA data. In summary, we find that PWM can obtain higher estimated welfare than EWM in this application. However, we emphasize that this difference was not found to be statistically significant. 
\end{remark}

Next, we derive a bound on maximum regret in the setting where $\mathcal{X} = [0,1]^2$, so that our class has infinite VC dimension. We consider the following restriction on the class of distributions:

\begin{assumption}\label{ass:monreg}
Let $\mathcal{P}_r$ be a set of DGPs such that there exists some constant $A > 0$, where for every distribution in $\mathcal{P}_r$, the marginal distribution of $X = (X_1,X_2)$ can be decomposed as follows:
\[P_X(M_1\times M_2) = \int_{M_2}P_{X_1|x_2}(M_1)dP_{X_2}~,\]
where $M_1$ and $M_2$ are measurable subsets of $[0,1]$, and $P_{X_1|x_2}$ is continuous with density bounded above by $A$, for all $x_2 \in [0,1]$.
\end{assumption}

In words, Assumption \ref{ass:monreg} requires that the conditional distribution of $X_1$ given $X_2$ is continuous with a uniformly bounded density. With this regularity condition imposed, we are able to derive the following uniform bound on the approximation bias $W^*_\mathcal{G} - W^*_{\mathcal{G}_k}$:

\begin{proposition}\label{prop:monbias}
Under Assumption \ref{ass:monreg}, the approximation bias of the approximating sequence $\{\mathcal{G}_k\}_{k=1}^\infty$ from Example \ref{ex:aseq3} satisfies
$$\sup_{P \in \mathcal{P}_{r} \cap \mathcal{P}(M,\kappa)} W^*_\mathcal{G} - W^*_{\mathcal{G}_k} \le A\frac{M}{\kappa}2^{-k}~,$$
\end{proposition}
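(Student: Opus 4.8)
The plan is to bound the approximation bias by constructing, for any $G \in \mathcal{G}$ (a monotone non-increasing set, WLOG increasing by symmetry in the orientation of the argument), an explicit set $\tilde G \in \mathcal{G}_k$ that is close to $G$, and then to control $W(G) - W(\tilde G)$ by the welfare integrand's bound times the Lebesgue measure of the symmetric difference $G \triangle \tilde G$. The key observation is that the welfare integrand $\big(\frac{YD}{e(X)} - \frac{Y(1-D)}{1-e(X)}\big)$ is bounded in absolute value by $\frac{M/2}{\kappa} + \frac{M/2}{\kappa} = \frac{M}{\kappa}$ under Assumption \ref{ass:BO}, so that for any two allocations $G, G'$,
$$
|W(G) - W(G')| \le \frac{M}{\kappa}\, E_P\big[{\bf 1}\{X \in G \triangle G'\}\big] = \frac{M}{\kappa}\, P_X(G \triangle G')~,
$$
and under Assumption \ref{ass:monreg} we further have $P_X(G \triangle G') \le A \cdot \mathrm{Leb}(G \triangle G')$.

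The heart of the argument is therefore a deterministic approximation lemma: any monotone region $G = \{x_2 \le f(x_1)\}$ with $f:[0,1]\to[0,1]$ non-increasing can be approximated by a region in $\mathcal{G}_k$ — whose boundary is a monotone, piecewise-linear curve with breakpoints on the grid $\{j/T\}_{j=0}^T$, $T = 2^{k-1}$ — up to symmetric-difference area at most $2^{-k}$ (or a constant multiple; one wants the constant absorbed so the bound reads exactly $A\frac{M}{\kappa}2^{-k}$, so some care with constants is needed, possibly using that $f$ maps into $[0,1]$ giving a box of area $1$ and the grid spacing $1/T = 2^{1-k}$, then halving by a midpoint/one-sided choice). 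First I would fix $G$ and set $g(x_1) := $ the piecewise-linear interpolant (or a one-sided monotone step/linear majorant or minorant) of $f$ on the nodes $j/T$; because $f$ is monotone, on each subinterval $[\frac{j-1}{T},\frac jT]$ the graph of $f$ is trapped between the horizontal lines at heights $f(\frac{j-1}{T})$ and $f(\frac jT)$, so the area between $f$ and the interpolant on that strip is at most $\frac1T\big(f(\tfrac{j-1}{T}) - f(\tfrac jT)\big)$; summing the telescoping series over $j=1,\dots,T$ gives total area at most $\frac1T\big(f(0) - f(1)\big) \le \frac1T = 2^{1-k}$. I would then need to check that the interpolant $g$ — which is itself monotone and piecewise linear with nodes on the grid — corresponds to a genuine element of $\mathcal{G}_k$, i.e. that it can be written as $\sum_{j=0}^{T}\theta_j \psi_{T,j}(x_1)$ with the slope-monotonicity constraint $D_T\Theta_T \ge 0$ (up to the sign convention matching the non-increasing orientation of this application); this is where I would invoke that the triangular kernels $\psi_{T,j}$ form a basis for continuous piecewise-linear functions with breakpoints at $j/T$, with $\theta_j = -g(j/T)$ and the monotonicity of $g$ translating exactly into the linear inequality on the $\theta_j$'s.

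Having established $\mathrm{Leb}(G \triangle \tilde G) \le 2^{-k}$ for a suitable $\tilde G \in \mathcal{G}_k$ (shaving a factor of $2$ if needed by taking $\tilde G$ to be the region under the interpolant, which one-sidedly approximates, or by choosing breakpoints optimally), the proof concludes by
$$
W^*_{\mathcal{G}} - W^*_{\mathcal{G}_k} = \sup_{G \in \mathcal{G}} W(G) - \sup_{G' \in \mathcal{G}_k} W(G') \le \sup_{G \in \mathcal{G}}\big(W(G) - W(\tilde G)\big) \le \frac{M}{\kappa}\, A\, 2^{-k}~,
$$
where the first inequality uses $\tilde G \in \mathcal{G}_k$ for each $G$, and the supremum over $P \in \mathcal{P}_r$ is then immediate since the bound $\frac{M}{\kappa}A2^{-k}$ is uniform in $P$. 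I expect the main obstacle to be purely bookkeeping: getting the constant to come out to exactly $2^{-k}$ rather than $2^{1-k}$ or $C\cdot 2^{-k}$, which forces one to be careful about whether $\tilde G$ should be the region below the interpolant (a one-sided approximant with area gap $\le 2^{-k}$ by a midpoint argument) versus a two-sided interpolant, and about whether $\mathcal{X}_1, \mathcal{X}_2$ can be normalized to $[0,1]$ without loss; a secondary subtlety is handling the possibility that $W^*_{\mathcal{G}}$ is not attained, which is why the argument is phrased via the supremum and an $\varepsilon$-maximizer rather than a maximizer of $W$ over $\mathcal{G}$.
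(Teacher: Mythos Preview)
Your proposal is correct and follows essentially the same route as the paper: bound $|W(G)-W(\tilde G)|$ by $\frac{M}{\kappa}\,P(G\Delta\tilde G)\le A\frac{M}{\kappa}\,\mathrm{Leb}(G\Delta\tilde G)$, take $\tilde G$ to be the region under the piecewise-linear interpolant of the monotone boundary on a dyadic grid, and use monotonicity to trap both graphs in rectangles $[x_{i-1},x_i]\times[b(x_{i-1}),b(x_i)]$ whose areas telescope to the grid width. On your constant worry, the paper simply takes the interpolation grid $\pi_k=\{i/2^k\}_{i=0}^{2^k}$ (so the telescoping sum yields $2^{-k}$ directly); the apparent mismatch with $T=2^{k-1}$ in the definition of $\mathcal{G}_k$ is a minor indexing issue that affects neither the argument nor the rate.
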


To illustrate the use of Proposition \ref{prop:monbias} in our setting, we derive a bound on maximum regret for monotone allocations. Proposition \ref{prop:monbias} and Corollary \ref{cor:regretrate}, along with the bound on $V_k$ given in Example \ref{ex:aseq3} allow us to conclude that:

\begin{corollary}\label{cor:monrate}
Let $C_n(k)$ be the Rademacher or holdout penalty. Under Assumptions \ref{ass:so}, \ref{ass:BO}, \ref{ass:seqVCdim}, and \ref{ass:monreg}, we have that
$$\sup_{P \in \mathcal{P}_r \cap \mathcal{P}(M,\kappa)} E_{P^n}[W^*_\mathcal{G} - W(\hat{G}_n)] = O\big(n^{-\frac{1}{3}}\big)~.$$
\end{corollary}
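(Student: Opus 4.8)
The plan is to verify the hypotheses of Corollary \ref{cor:regretrate} for the sieve sequence of Example \ref{ex:aseq3} and for each of the two penalties, then read off explicit rates for the approximation bound $\gamma_k$ and the penalty bound $\zeta(k,n)$, and finally optimize the resulting bound over $k$ through an explicit logarithmic choice $k = k_n$.

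First I would check that Assumptions \ref{ass:Cnk} and \ref{ass:Cnk2} hold for both the Rademacher and the holdout penalty: this is exactly the content of Lemmas \ref{lem:rademacher} and \ref{lem:holdout}, which provide the required sub-Gaussian tail inequality (so Assumption \ref{ass:Cnk} holds with $c_1 = 1$ and $c_0$ a constant depending only on $M$, $\kappa$, and, in the holdout case, on $\ell$), together with $\sup_{P \in \mathcal{P}(M,\kappa)} E_{P^n}[C_n(k)] \le C'\sqrt{V_k/n}$ for a constant $C'$ depending only on $M$, $\kappa$ (and $\ell$). Next I would instantiate Assumption \ref{ass:unifrate} on $\mathcal{P}_r$: Proposition \ref{prop:monbias} gives $\sup_{P \in \mathcal{P}_r} \big(W^*_\mathcal{G} - W^*_{\mathcal{G}_k}\big) \le A\frac{M}{\kappa}2^{-k}$, so we may take $\gamma_k = 2^{-k} \to 0$; and combining Assumption \ref{ass:Cnk2} with the (possibly loose) VC bound $V_k \le 2^{k-1} + 2$ from Example \ref{ex:aseq3} gives $\sup_{P \in \mathcal{P}_r \cap \mathcal{P}(M,\kappa)} E_{P^n}[C_n(k)] = O\big(\sqrt{V_k/n}\big) = O\big(2^{k/2} n^{-1/2}\big)$, which is non-decreasing in $k$ and vanishes as $n \to \infty$, so we may take $\zeta(k,n) = 2^{k/2} n^{-1/2}$.

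With these choices, Corollary \ref{cor:regretrate} yields
$$\sup_{P \in \mathcal{P}_r \cap \mathcal{P}(M,\kappa)} E_{P^n}[W^*_\mathcal{G} - W(\hat{G}_n)] \le \inf_k \Big[ O\big(2^{k/2} n^{-1/2}\big) + O\big(2^{-k}\big) + \sqrt{\frac{k}{n}} \Big] + \sqrt{\frac{\log(\Delta e)}{2c_0 n}}~.$$
I would then balance the first two terms: $2^{k/2} n^{-1/2} = 2^{-k}$ forces $2^{3k/2} = n^{1/2}$, i.e. $2^{k} = n^{1/3}$. Taking the integer $k_n := \lfloor \frac{1}{3}\log_2 n \rfloor$, one has $2^{k_n} \in \big[\frac{1}{2} n^{1/3}, n^{1/3}\big]$, hence $2^{-k_n} \le 2 n^{-1/3}$ and $2^{k_n/2} n^{-1/2} \le n^{1/6} n^{-1/2} = n^{-1/3}$, while $\sqrt{k_n/n} = O\big(\sqrt{\log n / n}\big) = o(n^{-1/3})$ and the final term is $O(n^{-1/2}) = o(n^{-1/3})$. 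Evaluating the infimum at $k = k_n$ therefore bounds the left-hand side by $O(n^{-1/3})$, which is the claim.

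Once Corollary \ref{cor:regretrate}, Proposition \ref{prop:monbias}, and Lemmas \ref{lem:rademacher}/\ref{lem:holdout} are in hand, the argument is essentially bookkeeping; the only points requiring care are (i) observing that the exponential growth $V_k \asymp 2^{k}$ along this particular sieve is exactly offset by the geometric decay $2^{-k}$ of the approximation bias, so the balanced rate is $n^{-1/3}$ rather than anything slower, and (ii) checking that the two residual $O(n^{-1/2})$-type terms — the $\sqrt{k/n}$ term and the constant term inherited from Theorem \ref{thm:regretbound} — are negligible next to $n^{-1/3}$, which holds since $\log n / n = o(n^{-2/3})$. There is no substantive obstacle beyond ensuring that uniformity in $P$ is preserved through the chain of inequalities, which it is, because every bound invoked is uniform over $\mathcal{P}_r \cap \mathcal{P}(M,\kappa)$ with constants depending only on $A$, $M$, $\kappa$ (and $\ell$).
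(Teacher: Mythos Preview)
Your proposal is correct and follows exactly the approach the paper indicates: the text immediately preceding Corollary \ref{cor:monrate} says that Proposition \ref{prop:monbias}, Corollary \ref{cor:regretrate}, and the VC bound $V_k \le 2^{k-1}+2$ from Example \ref{ex:aseq3} together yield the result, and your argument simply makes this explicit by verifying Assumption \ref{ass:unifrate} with $\gamma_k = 2^{-k}$ and $\zeta(k,n) = 2^{k/2}n^{-1/2}$, then balancing at $2^k \asymp n^{1/3}$. The paper does not write out a separate proof, so your bookkeeping is precisely the intended derivation.
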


Corollary \ref{cor:monrate} establishes a polynomial rate of convergence for PWM. In contrast, we are not aware of any results which would allow us to derive a bound on maximum regret for EWM (or other related methods which do not employ a sieve construction) under Assumption \ref{ass:monreg}. 

In Appendix B, we derive a series of results on the behavior of EWM under suitable entropy restrictions on the class $\mathcal{G}$, which show that under the stronger assumption that $X$ is continuous with a bounded density, EWM in fact achieves a root-n rate (up to a log factor) in this example, and that this rate is optimal. As we explain in Remark B.2, PWM can also achieve the same optimal rate of convergence in this setting, which would \emph{not} be the case for a deterministic $k(n)$ chosen to obtain the rate derived in Corollary \ref{cor:monrate}. This further reinforces the observation made in Remark \ref{rem:detseq} about the adaptation of PWM to additional regularities.

\clearpage

\appendix
%
\clearpage	
\bibliography{references.bib}

\begin{thebibliography}{39}
\newcommand{\enquote}[1]{``#1''}
\providecommand{\natexlab}[1]{#1}
\providecommand{\url}[1]{\texttt{#1}}
\providecommand{\urlprefix}{URL }
\providecommand{\bibAnnoteFile}[1]{%
  \IfFileExists{#1}{\begin{quotation}\noindent\textsc{Key:} #1\\
  \textsc{Annotation:}\ \input{#1}\end{quotation}}{}}
\providecommand{\bibAnnote}[2]{%
  \begin{quotation}\noindent\textsc{Key:} #1\\
  \textsc{Annotation:}\ #2\end{quotation}}

\bibitem[{Abadie et~al.(2018)Abadie, Chingos, and West}]{abadie2013}
Abadie, Alberto, Matthew~M Chingos, and Martin~R West (2018),
  \enquote{Endogenous stratification in randomized experiments.} \emph{Review
  of Economics and Statistics}, 100, 567--580.
\bibAnnoteFile{abadie2013}

\bibitem[{Armstrong and Shen(2015)}]{armstrong2015}
Armstrong, Timothy and Shu Shen (2015), \enquote{Inference on optimal treatment
  assignments.}
\bibAnnoteFile{armstrong2015}

\bibitem[{Athey and Wager(2017)}]{athey2017}
Athey, Susan and Stefan Wager (2017), \enquote{Efficient policy learning.}
  \emph{arXiv preprint arXiv:1702.02896}.
\bibAnnoteFile{athey2017}

\bibitem[{Bartlett(2008)}]{bartlett2008}
Bartlett, Peter~L (2008), \enquote{Fast rates for estimation error and oracle
  inequalities for model selection.} \emph{Econometric Theory}, 24, 545--552.
\bibAnnoteFile{bartlett2008}

\bibitem[{Bartlett et~al.(2002)Bartlett, Boucheron, and Lugosi}]{bartlett2002}
Bartlett, Peter~L, St{\'e}phane Boucheron, and G{\'a}bor Lugosi (2002),
  \enquote{Model selection and error estimation.} \emph{Machine Learning}, 48,
  85--113.
\bibAnnoteFile{bartlett2002}

\bibitem[{Bartlett and Mendelson(2002)}]{bartlett2002rad}
Bartlett, Peter~L and Shahar Mendelson (2002), \enquote{Rademacher and gaussian
  complexities: Risk bounds and structural results.} \emph{Journal of Machine
  Learning Research}, 3, 463--482.
\bibAnnoteFile{bartlett2002rad}

\bibitem[{Beresteanu(2004)}]{beresteanu2004}
Beresteanu, Arie (2004), \enquote{Nonparametric estimation of regression
  functions under restrictions on partial derivatives.} Technical report, Duke
  University, Department of Economics.
\bibAnnoteFile{beresteanu2004}

\bibitem[{Beygelzimer and Langford(2009)}]{beygelzimer2009}
Beygelzimer, Alina and John Langford (2009), \enquote{The offset tree for
  learning with partial labels.} In \emph{Proceedings of the 15th ACM SIGKDD
  international conference on Knowledge discovery and data mining}, 129--138,
  ACM.
\bibAnnoteFile{beygelzimer2009}

\bibitem[{Bhattacharya and Dupas(2012)}]{bhat2012}
Bhattacharya, Debopam and Pascaline Dupas (2012), \enquote{Inferring welfare
  maximizing treatment assignment under budget constraints.} \emph{Journal of
  Econometrics}, 167, 168--196.
\bibAnnoteFile{bhat2012}

\bibitem[{Birg{\'e} and Massart(1993)}]{birge1993}
Birg{\'e}, Lucien and Pascal Massart (1993), \enquote{Rates of convergence for
  minimum contrast estimators.} \emph{Probability Theory and Related Fields},
  97, 113--150.
\bibAnnoteFile{birge1993}

\bibitem[{Bloom et~al.(1997)Bloom, Orr, Bell, Cave, Doolittle, Lin, and
  Bos}]{bloom1997}
Bloom, Howard~S, Larry~L Orr, Stephen~H Bell, George Cave, Fred Doolittle,
  Winston Lin, and Johannes~M Bos (1997), \enquote{The benefits and costs of
  jtpa title ii-a programs: Key findings from the national job training
  partnership act study.} \emph{Journal of human resources}, 549--576.
\bibAnnoteFile{bloom1997}

\bibitem[{Boucheron et~al.(2005)Boucheron, Bousquet, and
  Lugosi}]{boucheron2005}
Boucheron, St{\'e}phane, Olivier Bousquet, and G{\'a}bor Lugosi (2005),
  \enquote{Theory of classification: A survey of some recent advances.}
  \emph{ESAIM: probability and statistics}, 9, 323--375.
\bibAnnoteFile{boucheron2005}

\bibitem[{Chamberlain(2011)}]{chamberlain2011}
Chamberlain, Gary (2011), \enquote{Bayesian aspects of treatment choice.}
  \emph{The Oxford Handbook of Bayesian Econometrics}, 11--39.
\bibAnnoteFile{chamberlain2011}

\bibitem[{Chen and Lee(2016)}]{chen2016}
Chen, Le-Yu and Sokbae Lee (2016), \enquote{Best subset binary prediction.}
  \emph{arXiv preprint arXiv:1610.02738}.
\bibAnnoteFile{chen2016}

\bibitem[{Dehejia(2005)}]{dehejia2005}
Dehejia, Rajeev~H (2005), \enquote{Program evaluation as a decision problem.}
  \emph{Journal of Econometrics}, 125, 141--173.
\bibAnnoteFile{dehejia2005}

\bibitem[{Dudley(1999)}]{dudley1999}
Dudley, Richard~M (1999), \emph{Uniform central limit theorems}, volume~23.
  Cambridge Univ Press.
\bibAnnoteFile{dudley1999}

\bibitem[{Gy{\"o}rfi et~al.(1996)Gy{\"o}rfi, Devroye, and Lugosi}]{devroye1996}
Gy{\"o}rfi, L, L~Devroye, and G~Lugosi (1996), \emph{A probabilistic theory of
  pattern recognition}. Springer-Verlag.
\bibAnnoteFile{devroye1996}

\bibitem[{Hirano and Porter(2009)}]{hirano2009}
Hirano, Keisuke and Jack~R Porter (2009), \enquote{Asymptotics for statistical
  treatment rules.} \emph{Econometrica}, 77, 1683--1701.
\bibAnnoteFile{hirano2009}

\bibitem[{Kallus(2016)}]{kallus2016}
Kallus, Nathan (2016), \enquote{Learning to personalize from observational
  data.} \emph{arXiv preprint arXiv:1608.08925}.
\bibAnnoteFile{kallus2016}

\bibitem[{Kitagawa and Tetenov(2018)}]{kt2015}
Kitagawa, Toru and Aleksey Tetenov (2018), \enquote{Who should be treated?
  empirical welfare maximization methods for treatment choice.}
  \emph{Econometrica}, 86, 591--616.
\bibAnnoteFile{kt2015}

\bibitem[{Kock and Thyrsgaard(2017)}]{kock2017}
Kock, Anders~Bredahl and Martin Thyrsgaard (2017), \enquote{Optimal sequential
  treatment allocation.}
\bibAnnoteFile{kock2017}

\bibitem[{Koltchinskii(2008)}]{kolt2008}
Koltchinskii, V (2008), \enquote{Oracle inequalities in empirical risk
  minimization and sparse recovery problems: Lecture notes.} Technical report,
  Technical report, Ecole d'ete de Probabilit{\'e}s de Saint-Flour, 2008. 12.6.
\bibAnnoteFile{kolt2008}

\bibitem[{Koltchinskii(2001)}]{kolt2001}
Koltchinskii, Vladimir (2001), \enquote{Rademacher penalties and structural
  risk minimization.} \emph{IEEE Transactions on Information Theory}, 47,
  1902--1914.
\bibAnnoteFile{kolt2001}

\bibitem[{Manski(2004)}]{manski2004}
Manski, Charles~F (2004), \enquote{Statistical treatment rules for
  heterogeneous populations.} \emph{Econometrica}, 72, 1221--1246.
\bibAnnoteFile{manski2004}

\bibitem[{Massart(2007)}]{massart2007}
Massart, Pascal (2007), \enquote{Concentration inequalities and model
  selection.}
\bibAnnoteFile{massart2007}

\bibitem[{Qian and Murphy(2011)}]{qian2011}
Qian, Min and Susan~A Murphy (2011), \enquote{Performance guarantees for
  individualized treatment rules.} \emph{Annals of statistics}, 39, 1180.
\bibAnnoteFile{qian2011}

\bibitem[{Rai(2018)}]{rai2018}
Rai, Yoshiyasu (2018), \enquote{Statistical inference for treatment assignment
  policies.}
\bibAnnoteFile{rai2018}

\bibitem[{Schlag(2007)}]{schlag2007}
Schlag, Karl~H (2007), \enquote{Eleven - designing randomized experiments under
  minimax regret.} \emph{Unpublished manuscript, European University
  Institute}.
\bibAnnoteFile{schlag2007}

\bibitem[{Scott and Nowak(2006)}]{scott2006}
Scott, Clayton and Robert~D Nowak (2006), \enquote{Minimax-optimal
  classification with dyadic decision trees.} \emph{IEEE transactions on
  information theory}, 52, 1335--1353.
\bibAnnoteFile{scott2006}

\bibitem[{Stoye(2009)}]{stoye2009}
Stoye, J{\"o}rg (2009), \enquote{Minimax regret treatment choice with finite
  samples.} \emph{Journal of Econometrics}, 151, 70--81.
\bibAnnoteFile{stoye2009}

\bibitem[{Stoye(2012)}]{stoye2012}
Stoye, J{\"o}rg (2012), \enquote{Minimax regret treatment choice with
  covariates or with limited validity of experiments.} \emph{Journal of
  Econometrics}, 166, 138--156.
\bibAnnoteFile{stoye2012}

\bibitem[{Swaminathan and Joachims(2015)}]{swaminathan2015}
Swaminathan, Adith and Thorsten Joachims (2015), \enquote{Batch learning from
  logged bandit feedback through counterfactual risk minimization.}
  \emph{Journal of Machine Learning Research}, 16, 1731--1755.
\bibAnnoteFile{swaminathan2015}

\bibitem[{Tetenov(2012)}]{tetenov2012}
Tetenov, Aleksey (2012), \enquote{Statistical treatment choice based on
  asymmetric minimax regret criteria.} \emph{Journal of Econometrics}, 166,
  157--165.
\bibAnnoteFile{tetenov2012}

\bibitem[{Van Der~Vaart and Wellner(1996)}]{van1996}
Van Der~Vaart, Aad~W and Jon~A Wellner (1996), \enquote{Weak convergence.} In
  \emph{Weak Convergence and Empirical Processes}, 16--28, Springer.
\bibAnnoteFile{van1996}

\bibitem[{Vapnik and Chervonenkis(1974)}]{vapnik1974}
Vapnik, Vladimir~N and Alexey~J Chervonenkis (1974), \enquote{Theory of pattern
  recognition.}
\bibAnnoteFile{vapnik1974}

\bibitem[{Viviano(2019)}]{viviano2019}
Viviano, Davide (2019), \enquote{Policy targeting under network interference.}
  \emph{arXiv preprint arXiv:1906.10258}.
\bibAnnoteFile{viviano2019}

\bibitem[{Zadrozny(2003)}]{zadrozny2003}
Zadrozny, Bianca (2003), \enquote{Policy mining: Learning decision policies
  from fixed sets of data.}
\bibAnnoteFile{zadrozny2003}

\bibitem[{Zhao et~al.(2012)Zhao, Zeng, Rush, and Kosorok}]{zhao2012}
Zhao, Yingqi, Donglin Zeng, A~John Rush, and Michael~R Kosorok (2012),
  \enquote{Estimating individualized treatment rules using outcome weighted
  learning.} \emph{Journal of the American Statistical Association}, 107,
  1106--1118.
\bibAnnoteFile{zhao2012}

\bibitem[{Zhou et~al.(2018)Zhou, Athey, and Wager}]{zhou2018}
Zhou, Zhengyuan, Susan Athey, and Stefan Wager (2018), \enquote{Offline
  multi-action policy learning: Generalization and optimization.} \emph{arXiv
  preprint arXiv:1810.04778}.
\bibAnnoteFile{zhou2018}

\end{thebibliography}


\begin{thebibliography}{14}
\newcommand{\enquote}[1]{``#1''}
\providecommand{\natexlab}[1]{#1}
\providecommand{\url}[1]{\texttt{#1}}
\providecommand{\urlprefix}{URL }
\providecommand{\bibAnnoteFile}[1]{%
  \IfFileExists{#1}{\begin{quotation}\noindent\textsc{Key:} #1\\
  \textsc{Annotation:}\ \input{#1}\end{quotation}}{}}
\providecommand{\bibAnnote}[2]{%
  \begin{quotation}\noindent\textsc{Key:} #1\\
  \textsc{Annotation:}\ #2\end{quotation}}

\bibitem[{Alexander(1984)}]{alexander1984}
Alexander, Kenneth~S (1984), \enquote{Probability inequalities for empirical
  processes and a law of the iterated logarithm.} \emph{The Annals of
  Probability}, 12, 1041--1067.
\bibAnnoteFile{alexander1984}

\bibitem[{Audibert(2004)}]{audibert2004}
Audibert, J-Y (2004), \enquote{Classification under polynomial entropy and
  margin assumptions and randomized estimators.}
\bibAnnoteFile{audibert2004}

\bibitem[{Bartlett et~al.(2002)Bartlett, Boucheron, and Lugosi}]{bartlett2002}
Bartlett, Peter~L, St{\'e}phane Boucheron, and G{\'a}bor Lugosi (2002),
  \enquote{Model selection and error estimation.} \emph{Machine Learning}, 48,
  85--113.
\bibAnnoteFile{bartlett2002}

\bibitem[{Bertsimas et~al.(2016)Bertsimas, King, Mazumder
  et~al.}]{bertsimas2016}
Bertsimas, Dimitris, Angela King, Rahul Mazumder, et~al. (2016), \enquote{Best
  subset selection via a modern optimization lens.} \emph{The Annals of
  Statistics}, 44, 813--852.
\bibAnnoteFile{bertsimas2016}

\bibitem[{Boucheron et~al.(2013)Boucheron, Lugosi, and Massart}]{boucheron2013}
Boucheron, St{\'e}phane, G{\'a}bor Lugosi, and Pascal Massart (2013),
  \emph{Concentration inequalities: A nonasymptotic theory of independence}.
  Oxford university press.
\bibAnnoteFile{boucheron2013}

\bibitem[{Dudley(1999)}]{dudley1999}
Dudley, Richard~M (1999), \emph{Uniform central limit theorems}, volume~23.
  Cambridge Univ Press.
\bibAnnoteFile{dudley1999}

\bibitem[{Gy{\"o}rfi et~al.(1996)Gy{\"o}rfi, Devroye, and Lugosi}]{devroye1996}
Gy{\"o}rfi, L, L~Devroye, and G~Lugosi (1996), \emph{A probabilistic theory of
  pattern recognition}. Springer-Verlag.
\bibAnnoteFile{devroye1996}

\bibitem[{Kitagawa and Tetenov(2018)}]{kt2015}
Kitagawa, Toru and Aleksey Tetenov (2018), \enquote{Who should be treated?
  empirical welfare maximization methods for treatment choice.}
  \emph{Econometrica}, 86, 591--616.
\bibAnnoteFile{kt2015}

\bibitem[{Mammen et~al.(1999)Mammen, Tsybakov et~al.}]{mammen1999}
Mammen, Enno, Alexandre~B Tsybakov, et~al. (1999), \enquote{Smooth
  discrimination analysis.} \emph{The Annals of Statistics}, 27, 1808--1829.
\bibAnnoteFile{mammen1999}

\bibitem[{Massart and N{\'e}d{\'e}lec(2006)}]{massart2006}
Massart, Pascal and {\'E}lodie N{\'e}d{\'e}lec (2006), \enquote{Risk bounds for
  statistical learning.} \emph{The Annals of Statistics}, 34, 2326--2366.
\bibAnnoteFile{massart2006}

\bibitem[{Tsybakov(2004)}]{tsybakov2004}
Tsybakov, Alexandre~B (2004), \enquote{Optimal aggregation of classifiers in
  statistical learning.} \emph{Annals of Statistics}, 135--166.
\bibAnnoteFile{tsybakov2004}

\bibitem[{van~de Geer(2000)}]{geer2000}
van~de Geer, Sara~A (2000), \emph{Empirical Processes in M-estimation},
  volume~6. Cambridge university press.
\bibAnnoteFile{geer2000}

\bibitem[{Van Der~Vaart and Wellner(1996)}]{van1996}
Van Der~Vaart, Aad~W and Jon~A Wellner (1996), \enquote{Weak convergence.} In
  \emph{Weak Convergence and Empirical Processes}, 16--28, Springer.
\bibAnnoteFile{van1996}

\bibitem[{Zhou et~al.(2018)Zhou, Athey, and Wager}]{zhou2018}
Zhou, Zhengyuan, Susan Athey, and Stefan Wager (2018), \enquote{Offline
  multi-action policy learning: Generalization and optimization.} \emph{arXiv
  preprint arXiv:1810.04778}.
\bibAnnoteFile{zhou2018}

\end{thebibliography}
\clearpage
\begin{small}
\begin{center}
\LARGE{Supplement to ``Model Selection for Treatment Choice: Penalized Welfare Maximization"}
\end{center}
\section{Proofs of Main Results}\label{sec:appendixA}
Recall that the planner's objective function is given by
\begin{equation}
\label{eqnmax}
W(G)=E_P\left[\left(\frac{YD}{e(X)}-\frac{Y(1-D)}{1-e(X)}\right)\cdot{\bf 1}\{X\in G\}\right]~.
\end{equation}
To each treatment allocation $G \in {\cal G}$ we associate a function $f_G:\mathbb{R}\times{\cal X}\times \{0,1\} \rightarrow \mathbb{R}$ defined by:
$$f_G(Z)=f_G(Y,X,D)=\left(\frac{YD}{e(X)}-\frac{Y(1-D)}{1-e(X)}\right)\cdot{\bf 1}\{X\in G\}~,$$
where $Z = (Y,X,D)$. Let ${\cal F}:=\{f_G:G \in {\cal G}\}$ denote the corresponding set of functions associated to decision rules in ${\cal G}$. By (\ref{eqnmax}), any optimal allocation in ${\cal G}$ solves
$$G^* \in \arg\max_{G \in {\cal G}}E_P\left[\left(\frac{YD}{e(X)}-\frac{Y(1-D)}{1-e(X)}\right)\cdot{\bf 1}\{X\in G\}\right]~.$$
Equivalently, functions associated to optimal allocations solve
$$f^* \in \arg\max_{f \in {\cal F}} E_P f(Z)~.$$
By an abuse of notation, for $G \in {\cal G}$, we set
$$W(f_G)=E_P f_G(Z)~.$$
Given an approximating sequence $\{{\cal G}_k\}_{k}$ of classes of treatment allocations, let $\{{\cal F}_k\}_{k}$ denote the sequence of associated classes of functions.

The following lemma, whose proof is given in \cite{kt2015} (Lemma A.1), establishes the relevant link between the classes of sets $\{\mathcal{G}_k\}_k$ and the classes of functions $\{\mathcal{F}_k\}_k$. It shows that if a class ${\cal G}$ has finite VC dimension, then the associated class ${\cal F}$ is a VC-subgraph class with dimension bounded above by that of ${\cal G}$.
\begin{lemma}\label{lem:VCsbgf}
Let ${\cal G}$ be a VC-class of subsets of ${\cal X}$ with finite VC dimension $V$. Let g be a function from ${\cal Z}:=\mathbb{R}\times{\cal X}\times \{0,1\}$ to $\mathbb{R}$. Then the set of functions ${\cal F}$ defined by
$${\cal F}=\{g(z)\cdot{\bf 1}\{x \in G\} : G \in {\cal G}\}$$
is a VC-subgraph class with dimension at most $V$.
\end{lemma}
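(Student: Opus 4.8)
The plan is to argue directly from the combinatorial definition. Recall that $\mathcal{F}$ is a VC-subgraph class of dimension at most $V$ precisely when the collection of subgraphs $\{\,\mathrm{sub}(f_G):G\in\mathcal{G}\,\}$, with $\mathrm{sub}(f_G):=\{(z,t)\in\mathcal{Z}\times\mathbb{R}:t<f_G(z)\}$, shatters no set of $V+1$ points in $\mathcal{Z}\times\mathbb{R}$. So I would suppose, toward a contradiction, that some points $(z_1,t_1),\dots,(z_{V+1},t_{V+1})$ are shattered by this collection, and then extract $V+1$ points of $\mathcal{X}$ shattered by $\mathcal{G}$, contradicting $\mathrm{VC}(\mathcal{G})=V$.

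The crux is the following reduction. Write $z_i=(y_i,x_i,d_i)$ and set $a_i:=g(z_i)$, a fixed real number. Then $(z_i,t_i)\in\mathrm{sub}(f_G)$ depends on $G$ only through the event $\{x_i\in G\}$: it equals ${\bf 1}\{t_i<a_i\}$ when $x_i\in G$ and ${\bf 1}\{t_i<0\}$ when $x_i\notin G$. If, for some $i$, these two values coincide, then membership of $(z_i,t_i)$ is constant in $G$ and the points cannot be shattered; hence for every $i$ exactly one of $t_i<a_i$, $t_i<0$ holds. A one-line case check shows this forces either ($a_i>0$, and $(z_i,t_i)\in\mathrm{sub}(f_G)\iff x_i\in G$) or ($a_i<0$, and $(z_i,t_i)\in\mathrm{sub}(f_G)\iff x_i\notin G$). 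Writing $A(G):=\{i:x_i\in G\}$ and letting $I_2$ be the set of indices of the second type, the subset of $\{1,\dots,V+1\}$ that $\mathrm{sub}(f_G)$ picks out equals $A(G)\,\triangle\,I_2$. Since $S\mapsto S\triangle I_2$ is a bijection of the power set of $\{1,\dots,V+1\}$ and $\mathrm{sub}(f_G)$ realizes all $2^{V+1}$ such subsets by assumption, $A(G)$ also realizes all $2^{V+1}$ subsets as $G$ ranges over $\mathcal{G}$; in particular the $x_i$ are distinct and $\mathcal{G}$ shatters $\{x_1,\dots,x_{V+1}\}$, the desired contradiction.

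I expect the only delicate point to be bookkeeping: treating the two sign regimes for $a_i$ uniformly, and verifying that the degenerate case ${\bf 1}\{t_i<a_i\}={\bf 1}\{t_i<0\}$ (which includes $a_i=0$) is incompatible with shattering. One should also fix a convention for the subgraph ($<$ versus $\le$) at the outset; the argument is identical either way, with strict and weak inequalities swapped in the case analysis. A softer alternative would be to quote standard stability properties of VC-subgraph classes — that $\{{\bf 1}\{x\in G\}:G\in\mathcal{G}\}$ is VC-subgraph of index $V$, and that pointwise multiplication by a fixed function preserves the VC-subgraph property — but these typically do not deliver the sharp index $V$, so I would favor the direct argument above.
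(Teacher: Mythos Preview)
Your argument is correct. The paper does not actually supply its own proof of this lemma: it simply cites \cite{kt2015}, Lemma~A.1. So there is no in-paper proof to compare against, and your direct combinatorial argument is a genuine, self-contained contribution rather than a paraphrase.

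The reduction you give is the standard one and is executed cleanly. The key observation---that membership of $(z_i,t_i)$ in $\mathrm{sub}(f_G)$ depends on $G$ only through ${\bf 1}\{x_i\in G\}$, and that shattering forces the two possible values to differ---is exactly right, and your case split into ``$a_i>0$, membership tracks $x_i\in G$'' versus ``$a_i<0$, membership tracks $x_i\notin G$'' is correct. The symmetric-difference trick to pass from the subsets picked out by $\mathrm{sub}(f_G)$ to the subsets $A(G)=\{i:x_i\in G\}$ is the clean way to handle the sign flip, and your remark that distinctness of the $x_i$ is forced (else $A(G)$ could not realize all $2^{V+1}$ patterns) closes the last gap. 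Your caveats about the degenerate case and the $<$ versus $\le$ convention for subgraphs are well placed and do not affect the argument.

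Compared to the paper's approach of deferring to another reference, your write-up has the advantage of delivering the sharp bound $V$ with an explicit, short proof; as you note, invoking generic stability lemmas for VC-subgraph classes would typically only give $V$ up to a constant or an additive term.
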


For each $k\geq 1$, let $\hat{f}_{n,k}$ be a maximizer of the empirical welfare over the class ${\cal F}_k$; that is:
$$\hat{f}_{n,k} =  \arg\max_{f \in \mathcal{F}_{k}} W_n(f)~,$$
and for $f \in {\cal F}_k$, define the complexity-penalized estimate of welfare by
$$R_{n,k}(f) = W_n(f) - C_n(k) - \sqrt{\frac{k}{n}}~.$$
The PWM rule $\hat{f}_{n,\hat{k}}$ is then chosen such that
$$\hat{k}=\arg\max_{k\geq 1} R_{n,k}(\hat{f}_{n,k})~.$$
In what follows, we set $\hat{f}_n := \hat{f}_{n,\hat{k}}$ and $R_n(\hat{f}_n) := R_{n,\hat{k}}(\hat{f}_{n,\hat{k}})$.

To bound the regret, we decompose it as follows
\begin{equation}
\label{eqna1}
W^*_{\mathcal{F}}-W(\hat{f}_n)=\left(W^*_{\mathcal{F}}-R_n(\hat{f}_n)\right)+\left(R_n(\hat{f}_n)-W(\hat{f}_n)\right).
\end{equation}
The following lemma yields (under Assumption \ref{ass:Cnk}) a subgaussian tail bound for the second term on the right hand side of the preceding equality.
\begin{lemma}\label{lemma:tailbound}
Given Assumption \ref{ass:Cnk}, there exists a positive constant $\Delta$ (that does not depend on n) such that:
$$P(R_n(\hat{f}_n) - W(\hat{f}_n) > \epsilon) \le \Delta e^{-2c_on\epsilon^2}~$$
for every n.
\end{lemma}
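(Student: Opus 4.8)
The plan is to reduce the event involving the random index $\hat{k}$ to a union over all $k$, apply the tail bound of Assumption \ref{ass:Cnk} term by term, and exploit the $\sqrt{k/n}$ term in $R_{n,k}$ to make the resulting sum converge. This is the treatment-choice analogue of a standard oracle-inequality step in model selection for classification.

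First I would unpack the definitions. Since $\hat{f}_n = \hat{f}_{n,\hat{k}}$ and $R_n(\hat{f}_n) = W_n(\hat{f}_{n,\hat{k}}) - C_n(\hat{k}) - \sqrt{\hat{k}/n}$, whatever value $\hat{k}$ realizes, the event $\{R_n(\hat{f}_n) - W(\hat{f}_n) > \epsilon\}$ is contained in
$$\bigcup_{k \ge 1} \Big\{ W_n(\hat{f}_{n,k}) - W(\hat{f}_{n,k}) - C_n(k) - \sqrt{k/n} > \epsilon \Big\}~.$$
A union bound then yields
$$P\big(R_n(\hat{f}_n) - W(\hat{f}_n) > \epsilon\big) \le \sum_{k \ge 1} P\Big( W_n(\hat{f}_{n,k}) - W(\hat{f}_{n,k}) - C_n(k) > \epsilon + \sqrt{k/n} \Big)~.$$

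Next I would apply Assumption \ref{ass:Cnk} to each summand, with $\epsilon$ replaced by $\epsilon + \sqrt{k/n}$, bounding the $k$-th term by $c_1 \exp\!\big(-2c_0 n (\epsilon + \sqrt{k/n})^2\big)$. Since $(\epsilon + \sqrt{k/n})^2 = \epsilon^2 + 2\epsilon\sqrt{k/n} + k/n \ge \epsilon^2 + k/n$ for $\epsilon > 0$, this is at most $c_1 \exp(-2c_0 n\epsilon^2)\exp(-2c_0 k)$. Summing the geometric series over $k \ge 1$ gives
$$P\big(R_n(\hat{f}_n) - W(\hat{f}_n) > \epsilon\big) \le c_1 e^{-2c_0 n\epsilon^2} \sum_{k \ge 1} e^{-2c_0 k} = \Delta e^{-2c_0 n\epsilon^2}~,$$
with $\Delta := c_1 e^{-2c_0}/(1 - e^{-2c_0})$, which is finite and does not depend on $n$, as required.

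The only real subtlety — and the reason the $\sqrt{k/n}$ term is built into $R_{n,k}$ in the first place — is that the sieve sequence $\{\mathcal{G}_k\}_k$ may be infinite, so a naive union bound over $k$ would diverge; the penalty $\sqrt{k/n}$ supplies exactly the extra factor $e^{-2c_0 k}$ that renders the sum summable, as anticipated in Remark \ref{rem:k/n}. (I would not dwell on measurability of $\hat{k}$, which the paper defers, per footnote 2.) If instead the sequence is finite, $\{\mathcal{G}_k\}_{k=1}^K$, the identical computation performed without the $\sqrt{k/n}$ term gives $\Delta = Kc_1$, matching the constant in the finite-sieve statement of Corollary \ref{cor:regretpt}.
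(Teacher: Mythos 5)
Your proof is correct and follows essentially the same argument as the paper: passing from the random index $\hat{k}$ to a union (equivalently, a supremum) over all $k$, applying Assumption \ref{ass:Cnk} at level $\epsilon + \sqrt{k/n}$, and using $(\epsilon+\sqrt{k/n})^2 \ge \epsilon^2 + k/n$ to sum the geometric series, yielding the same constant $\Delta = \sum_{k} c_1 e^{-2c_0 k}$.
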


\begin{proof}
First note that:
$$P(R_n(\hat{f}_n) - W(\hat{f}_n) > \epsilon) \le P\Big(\sup_k\big(R_{n,k}(\hat{f}_{n,k}) - W(\hat{f}_{n,k})\big) > \epsilon\Big)~,$$
then by the union bound:
$$P\Big(\sup_k\big(R_{n,k}(\hat{f}_{n,k}) - W(\hat{f}_{n,k})\big) > \epsilon\Big) \le \sum_{k} P(R_{n,k}(\hat{f}_{n,k}) - W(\hat{f}_{n,k}) > \epsilon)~.$$
Now by definition of $R_{n,k}$, we have
$$\sum_{k} P(R_{n,k}(\hat{f}_{n,k}) - W(\hat{f}_{n,k}) > \epsilon) = \sum_{k} P\big(W_{n}(\hat{f}_{n,k}) - C_n(k) - W(\hat{f}_{n,k}) > \epsilon + \sqrt{\frac{k}{n}}\big)~.$$
By Assumption \ref{ass:Cnk},
$$\sum_{k} P(W_{n}(\hat{f}_{n,k}) - W(\hat{f}_{n,k}) - C_n(k) > \epsilon + \sqrt{\frac{k}{n}}) \le \sum_{k} c_1 e^{-2c_o n(\epsilon + \sqrt{\frac{k}{n}})^2}  \le  e^{-2c_o n\epsilon^2}\sum_{k} c_1 e^{-2kc_o}~.$$
By setting
\begin{equation}
\label{eqnDel}
\Delta := \sum_{k} c_1 e^{-2kc_o} < \infty~,
\end{equation}
the result follows.
\end{proof}

\begin{proof}[{\bf Proof of Theorem \ref{thm:regretbound}}]
We follow the general strategy from \cite{bartlett2002}. For every $k$, we have
\begin{equation} \label{eq:regretbound1}
 W^*_{\mathcal{F}} - W(\hat{f}_n) = \left(W^*_{\mathcal{F}} - W^*_{\mathcal{F}_{k}}\right) + \left( W^*_{\mathcal{F}_{k}} - W(\hat{f}_n)\right)~.
\end{equation}
We first consider the second term in (\ref{eq:regretbound1}), and expand it as follows
\begin{equation} \label{eq:regretbound2}
W^*_{\mathcal{F}_{k}} - W(\hat{f}_n) = \left(W^*_{\mathcal{F}_{k}} - R_n(\hat{f}_n)\right) + \left(R_n(\hat{f}_n) - W(\hat{f}_n)\right)~.
\end{equation}
 By the definition of $R_n$, the first term of expression (\ref{eq:regretbound2}) is bounded by
$$W^*_{\mathcal{F}_{k}} - R_n(\hat{f}_n) \le W^*_{\mathcal{F}_{k}} - W_n(\hat{f}_{n,k}) + C_n(k) + \sqrt{\frac{k}{n}}~.$$
Fix $\delta > 0$, and choose some $f^*_k \in {\cal F}_k$ such that $W(f^*_k) + \delta \ge W^*_{\mathcal{F}_{k}}$.\footnotemark[6]
\footnotetext[6]{If the welfare criterion achieves its maximum on ${\cal F}_k$, then $f^*_k$ can be set equal to any maximizer. In general however such an optimum may not exist, and thus we must choose $f^*_k$ will to be an "almost maximizer" of the welfare criterion on ${\cal F}_k$.}.
We have
$$W^*_{\mathcal{F}_{k}} - W_n(\hat{f}_{n,k}) + C_n(k)+ \sqrt{\frac{k}{n}}\le W(f^*_k) + \delta - W_n(f^*_k) + C_n(k) + \sqrt{\frac{k}{n}}~.$$
Taking expectations of both sides and letting $\delta$ converge to $0$ yields
$$E[W^*_{\mathcal{F}_{k}} - R_n(\hat{f}_n)] \le E[C_n(k)] + \sqrt{\frac{k}{n}}~.$$
 By Lemma \ref{lemma:tailbound} and a standard integration argument \citep[see for instance problem 12.1 in][]{devroye1996}, the second term on the right hand side of (\ref{eq:regretbound2}) is bounded by
$$E[R_n(\hat{f}_n) - W(\hat{f}_n)] \le \sqrt{\frac{\log(\Delta e)}{2c_o n}}~.$$
Combining these bounds yields
$$E[W^*_{\mathcal{F}} - W(\hat{f}_n)] \le E[C_n(k)] + W^*_{\mathcal{F}} - W^*_{\mathcal{F}_{k}} + \sqrt{\frac{\log(\Delta e)}{2c_o n}} + \sqrt{\frac{k}{n}}~,$$
for every $k$, and our result follows.
\end{proof}

\begin{proof}[{\bf Proof of Lemma \ref{lem:rademacher}}]
We first establish the inequality
\begin{equation}
\label{eqnRad}
P(W_n(\hat{f}_{n,k}) - W(\hat{f}_{n,k}) - C_n(k) > \epsilon) \le \exp\left(-2n\epsilon^2(\frac{\kappa}{3M})^2\right)~.
\end{equation}
By two standard symmetrization arguments, we get
\begin{equation}
E\big[\sup_{f \in {\cal F}_k} W_n(f)-W(f)\big] \leq 2 E\big[ \sup_{f \in {\cal F}_k} \frac{1}{n} \sum_{i=1}^n \sigma_i f(Z_i)\big]=E\big[C_n(k)\big]~,
\label{eqna2}
\end{equation}
where we recall that $C_n(k)= E\left[2\sup_{f \in {\cal F}_k} \frac{1}{n} \sum_{i=1}^n \sigma_i f(Z_i)| Z_1,Z_2,\cdots,Z_n\right]$ and $\{\sigma_i\}_{i=1}^n$ is an $i.i.d$ sequence of Rademacher random variables independent from the data $\{Z_i\}_{i=1}^n$.
Note that
$$P(W_n(\hat{f}_{n,k}) - W(\hat{f}_{n,k}) - C_n(k) > \epsilon)\leq P(\sup_{f \in  {\cal F}_k}\left((W_n(f) - W(f)\right) - C_n(k) > \epsilon)~,$$
and set $M_{n,k}:= \sup_{f \in  {\cal F}_k}\left(W_n(f) - W(f)\right) - C_n(k)$. Combining the preceding inequality with (\ref{eqna2}) yields
$$P(W_n(\hat{f}_{n,k}) - W(\hat{f}_{n,k}) - C_n(k) > \epsilon)\leq P\left(M_{n,k}-EM_{n,k} >\epsilon \right)~.$$
To control the deviations of $M_{n,k}$ from its mean, we use McDiarmid's inequality \citep[see][Theorem 9.2; note that $M_{n,k}$ satisfies the bounded difference property with increments bounded by $\frac{3M}{n\kappa}$]{devroye1996} which yields the inequality
$$ P\left(M_{n,k}-EM_{n,k} >\epsilon \right)\leq \exp\left(-2n\epsilon^2(\frac{\kappa}{3M})^2\right)~,$$
from which our result follows.

The second inequality (where $C$ is a universal constant)
$$E[C_n(k)] \le C \frac{M}{\kappa}\sqrt{\frac{V_k}{n}}~,$$
follows from a chaining argument and a control on the universal entropy of VC subgraph classes \citep[see for instance the proof of Lemma A.4 in][]{kt2015}, along with Lemma \ref{lem:VCsbgf}.
\end{proof}

\begin{proof}[{\bf Proof of Lemma \ref{lem:holdout}}]
Let us assume for notational simplicity that the quantity $m = n(1-\ell)$ is an integer.
We first establish the inequality
\begin{equation}
\label{eqnHold}
P(W_m(\hat{f}_{m,k}) - W(\hat{f}_{m,k}) - C_m(k) > \epsilon) \le \exp\left(-2n\ell\epsilon^2(\frac{\kappa}{M})^2\right)~.
\end{equation}
By the definition of $C_m(k)$, we have
$$P(W(\hat{f}_{m,k}) - W(\hat{f}_{m,k}) - C_m(k) > \epsilon)=P(W_r(\hat{f}_{m,k}) - W(\hat{f}_{m,k}) > \epsilon)~.$$
Now, working conditionally on $\{Z_i\}_{i=1}^m$, we get by Hoeffding's inequality that
$$P(W_r(\hat{f}_{m,k}) - W(\hat{f}_{m,k}) > \epsilon | \{Z_i\}_{i=1}^m) \le \exp\left(-2n\ell\epsilon^2(\frac{\kappa}{M})^2\right)~.$$
Since the right hand side of the preceding inequality is non random, the inequality holds unconditionally as well.

We now establish the inequality
$$E[C_m(k)] \le C \frac{M}{\kappa\sqrt{(1-\ell)}}\sqrt{\frac{V_k}{n}}~.$$
By the definition of $C_m(k)$, we have
$$E[C_m(k)] = E[W_m(\hat{f}_{m,k}) - W_r(\hat{f}_{m,k})] = E[W_m(\hat{f}_{m,k}) - W(\hat{f}_{m,k}) + W(\hat{f}_{m,k}) - W_r(\hat{f}_{m,k})]~.$$
Note that by the law of iterated expectations, we have
$$E[W(\hat{f}_{m,k}) - W_r(\hat{f}_{m,k})] = 0~,$$
and by Lemma A.4 in \cite{kt2015} combined with Lemma \ref{lem:VCsbgf} there exists some universal constant $C$ such that:
$$E[W_m(\hat{f}_{m,k}) - W(\hat{f}_{m,k})] \le C\frac{M}{\kappa}\sqrt{\frac{V_k}{m}}~.$$
Since $m = (1-\ell)n $, the result follows.
\end{proof}

\begin{proof}[{\bf Proof of Propositions \ref{prop:rademacherbound} and \ref{prop:holdoutbound}}]
From the inequality
$$\frac{e^{-x}}{(1-e^{-x})} \leq \frac{1}{x}~,$$
and from (\ref{eqnDel}) and (\ref{eqnRad}), we derive that
$$\Delta \leq 1/2 \left(\frac{3M}{\kappa}\right)^2~.$$
Similarly, we derive from (\ref{eqnDel}) and (\ref{eqnHold}) that
$$\Delta \leq 1/(2l) \left(\frac{M}{\kappa}\right)^2~.$$
The results then follow by substituting these into the inequalities of Theorem \ref{thm:regretbound}.
\end{proof}

\begin{proof}[{\bf Proof of Theorem \ref{thm:eregretbound}}]
Our strategy here is to proceed analogously to the proof of Theorem \ref{thm:regretbound} with some additional machinery. Let $\hat{f}^e_n$ and $R_n^e(\cdot)$ be defined analogously to the case when the propensity score is known. For every $k$, we have that:
\begin{equation} \label{eq:eregretbound1}
 W^*_{\mathcal{F}} - W(\hat{f}^e_n) = \left(W^*_{\mathcal{F}} - W^*_{\mathcal{F}_{k}}\right) + \left(W^*_{\mathcal{F}_{k}} - W(\hat{f}^e_n)~\right).
\end{equation}
Adding and subtracting $R^e_n(\hat{f}^e_n)$ to the last term yields
\begin{equation} \label{eq:eregretbound2}
W^*_{\mathcal{F}_{k}} - W(\hat{f}^e_n) = \left(W^*_{\mathcal{F}_{k}} - R^e_n(\hat{f}^e_n)\right) + \left(R^e_n(\hat{f}^e_n) - W(\hat{f}^e_n)~\right).
\end{equation}
Let $f^*_k := \arg\max_{f \in \mathcal{F}_k} W(f)~,$ (if the supremum is not achieved, apply the argument to a $\delta$-maximizer of the welfare, and let $\delta$ tend to zero). Now consider the first term on the right hand side of (\ref{eq:eregretbound2}). Expanding yet again gives
\begin{equation} \label{eq:eregretbound3}
W^*_{\mathcal{F}_{k}} - R^e_n(\hat{f}^e_n) = W^*_{\mathcal{F}_{k}} - W_n(f^*_k) + W_n(f^*_k) - R^e_n(\hat{f}^e_n)~.
\end{equation}
From the definition of $R^e_n$, we have
$$W_n(f^*_k) - R^e_n(\hat{f}^e_n) \le W_n(f^*_k) - W^e_n(f^*_k) + C^e_n(k) + \sqrt{\frac{k}{n}} \le \frac{1}{n}\sum_{i=1}^n|\hat{\tau}_i - \tau_i| + C^e_n(k) + \sqrt{\frac{k}{n}}~.$$
Hence, considering the above inequality and taking expectations in (\ref{eq:eregretbound3}) yields
$$E[W^*_{\mathcal{F}_{k}} - R^e_n(\hat{f}^e_n))] \le E\Big[\frac{1}{n}\sum_{i=1}^n|\hat{\tau}_i - \tau_i|\Big] + E[C^e_n(k)] + \sqrt{\frac{k}{n}}~,$$
and thus by Assumption \ref{ass:ereg}
\begin{equation} \label{eq:eregretbound4}
E[W^*_{\mathcal{F}_{k}} - R^e_n(\hat{f}^e_n))] \le O(\phi_n^{-1})+ E[C^e_n(k)] + \sqrt{\frac{k}{n}}~.
\end{equation}
We now consider the second term on the right hand side of (\ref{eq:eregretbound2}). Let $\hat{k}$ be the class $k$ such that
$$\hat{f}^e_n = \hat{f}^e_{n,\hat{k}}~.$$
Note that $\hat{k}$ is random. We have
$$R^e_n(\hat{f}^e_n) - W(\hat{f}^e_n) = W^e_n(\hat{f}^e_{n,\hat{k}}) - C^e_n(\hat{k}) - \sqrt{\frac{\hat{k}}{n}} - W(\hat{f}^e_{n,\hat{k}})~.$$
By adding and subtracting $W_n(\hat{f}^e_{n,\hat{k}})$ and the function $\tilde{C}_n(\hat{k})$, we get

$$W^e_n(\hat{f}^e_{n,\hat{k}}) - C^e_n(\hat{k}) - \sqrt{\frac{k}{n}} - W(\hat{f}^e_{n,\hat{k}}) = $$
\begin{equation}\label{eq:egregretbound5}
\left(W^e_n(\hat{f}^e_{n,\hat{k}}) - W_n(\hat{f}^e_{n,\hat{k}})\right) + \left(\tilde{C}_n(\hat{k}) - C^e_n(\hat{k})\right)
+ \left(W_n(\hat{f}^e_{n,\hat{k}}) - W(\hat{f}^e_{n,\hat{k}}) -  \tilde{C}_n(\hat{k}) - \sqrt{\frac{\hat{k}}{n}}~\right)~.
\end{equation}
Note again that
$$\sup_k \big(W_n^e(\hat{f}^e_{n,k}) - W_n(\hat{f}^e_{n,k})\big) \le \frac{1}{n}\sum_{i=1}^n|\hat{\tau}_i - \tau_i|~,$$
and so by Assumptions \ref{ass:ereg} and \ref{ass:Cnke}, the first two terms of (\ref{eq:egregretbound5}) are of order $O(\phi^{-1}_n)$ in expectation. By the first part of Assumption \ref{ass:Cnke}, and an argument similar to the one used in the proof of Lemma \ref{lemma:tailbound}, it can be shown that
$$E\Big[\sup_k \big(W_n(\hat{f}^e_{n,k}) - W(\hat{f}^e_{n,k}) - \tilde{C}_n(k) - \sqrt{\frac{k}{n}}\big)\Big] \le \sqrt{\frac{\log(\Delta e)}{2c_0n}}~,$$
where $\Delta$ and $c_o$ are the same constants that appear in \ref{lemma:tailbound}. We thus get
\begin{equation} \label{eq:eregretbound6}
E[R^e_n(\hat{f}^e_n) - W(\hat{f}^e_n)] \le O(\phi^{-1}_n) +  \sqrt{\frac{\log(\Delta e)}{2m}}~.
\end{equation}
Now combining (\ref{eq:eregretbound4}) and (\ref{eq:eregretbound6}), we conclude that
$$E[W^*_{\mathcal{F}_{k}} - W(\hat{f}^e_n)] \le O(\phi^{-1}_n)+ E[C^e_n(k)] + \sqrt{\frac{k}{n}} +  \sqrt{\frac{\log(\Delta e)}{2m}}~.$$
Finally, by Assumption \ref{ass:Cnke}, we get
$$E[W^*_{\mathcal{F}} - W(\hat{f}^e_n)] \le O(\phi^{-1}_n) + E[\tilde{C}_n(k)] + W^*_{\mathcal{F}} - W^*_{\mathcal{F}_{k}} + \sqrt{\frac{k}{n}} +  \sqrt{\frac{\log(\Delta e)}{2m}}~,$$
for all $k$, and hence the result follows.
\end{proof}

\begin{proof}[{\bf Proof of Lemma \ref{lem:eholdout}}]
In what follows, we verify that the third condition of Assumption $\ref{ass:Cnke}$ is satisfied for the holdout penalty with estimated propensity score, as the first two conditions follow from previous arguments. 
Set
$$\tilde{C}_m(k)=W_m(\hat{f}^e_{m,k})-W_r(\hat{f}^e_{m,k})~.$$
Note that since the propensity score is unknown, the empirical welfare criteria $W_m$ and $W_r$ are infeasible. It can easily be shown that for this choice of $\tilde{C}_m(k)$, we have
$$|\tilde{C}_m(k)-C^e_m(k)|\leq \frac{1}{m}\sum_{i=1}^m |\hat{\tau_i}^E-\tau_i| + \frac{1}{r}\sum_{i=m+1}^n|\hat{\tau_i}^T-\tau_i|~,$$
which yields
$$E \sup_{k\geq 1}\left|\tilde{C}_m(k)-C^e_m(k)\right|=O(\phi^{-1}_n)~.$$
\end{proof}
 \begin{proof}[{\bf Proof of Proposition \ref{prop:monbias}}].
 Let $\mathcal{G}$ be the set of monotone allocations. Let $\pi_k$ denote the partition of $[0,1]$ formed by the points $x_i=i/2^k$, $i=0,\cdots, 2^k$. Let $\{\mathcal{G}_k\}_k$ be the approximating sequence defined in Example \ref{ex:aseq3}, and define $G^* \in \mathcal{G}$ to be a set such that $W(G^*) = W^*_\mathcal{G}$ (if no such $G^*$ exists, the argument proceeds by considering an ``almost maximizer"). By definition, for each $G \in \mathcal{G}$, there is an associated function $b_G:[0,1]\rightarrow [0,1]$ which determines the boundary of the allocation region, that is, such that $G = \{(x_1,x_2) \in \mathcal{X}:x_2 \le b_G(x_1)\}$.
 
Fix some $P \in \mathcal{P}_r$, where $\mathcal{P}_r$ is as defined in Assumption \ref{ass:monreg}. By definition,
$$W^*_\mathcal{G} - W^*_{\mathcal{G}_k} \le W(G^*) - W(\tilde{G}_k)~,$$
where $\tilde{G}_k \in \mathcal{G}_k$ is the allocation such that $b_{\tilde{G}_k}(\cdot)$ is the linear interpolation of $b_{G^*}$ on the partition $\pi_k$. We can re-write this as
\begin{equation}
\begin{split}
W(G^*) - W(\tilde{G}_k) &=  E\left[\left(\frac{YD}{e(X)}-\frac{Y(1-D)}{1-e(X)}\right)\cdot\left({\bf 1}\{X\in G^*\}-{\bf 1}\{X\in \tilde{G}_k\}\right)\right]\\
&\leq \frac{M}{\kappa}P_X(G^* \Delta \tilde{G}_k)~,
\end{split}
\end{equation}
where $\Delta$ denotes the symmetric difference operator, $A\Delta B:=A\backslash B \cup B\backslash A$.  Let $$M_i = [x_{i-1},x_i] \times [b_{G^*}(x_{i-1}),b_{G^*}(x_i)]~,$$
for $i = 1, \ldots, 2^k$. It follows from the monotonicity of $b_{G^*}$ that the graphs of the restrictions of $b_{G^*}(\cdot)$ and $b_{\tilde{G}_k}(\cdot)$ to $[x_{i-1},x_i]$ are contained in $M_i$. Hence we have that
$$P_X(G^* \Delta \tilde{G}_k) \le \sum_{i=1}^{2^k} P_X(M_i) = \sum_{i=1}^{2^k}P_X(M_{1i} \times M_{2i})~,$$
where $M_{1i} = [x_{i-1},x_i]$, $M_{2i} = [b_{G^*}(x_{i-1}),b_{G^*}(x_i)]$.
By Assumption \ref{ass:monreg},
$$P_X(M_{1i}\times M_{2i}) = \int_{M_{2i}}P_{X_1|x_2}(M_{1i})dP_{X_2} \le \frac{1}{2^k}AP_{X_2}(M_{2_i})~.$$
Summing over $i$:
$$\sum_{i=1}^{2^k}P_X(M_i) \le \sum_{i=1}^{2^k}\frac{1}{2^k}AP_{X_2}(M_{2i}) \le \frac{A}{2^k}~,$$
since the $\{M_{2i}\}_i$ form a partition of $[0,1]$. 
We thus obtain that
$$W^*_\mathcal{G} - W^*_{\mathcal{G}_k} \le A\frac{M}{\kappa}2^{-k}~,$$
as desired.
 \end{proof}

\section{Additional Results}\label{sec:supp_results}
\subsection{Supplement to Remark \ref{rem:tt1}}
In this subsection we provide some simple calculations that justify the comments made in Remark \ref{rem:tt1}. Consider first the Rademacher penalty, then Proposition \ref{prop:holdoutbound} shows that 
$$E_{P^n}[W^*_\mathcal{G} - W(\hat{G}_n)] \le \inf_k \Big[C\frac{M}{\kappa}\sqrt{\frac{V_k}{n}} + \big(W^*_{\mathcal{G}} - W^*_{\mathcal{G}_{k}}\big) + \sqrt{\frac{k}{n}}\Big]+ g(M,\kappa)\frac{M}{\kappa}\sqrt{\frac{1}{n}}~,$$
where $C$ is the universal constant derived in the bound of EWM in \cite{kt2015} and $g$ is defined as
$$g(M,\kappa) := 6\sqrt{\log\Big(\frac{3\sqrt{e}}{\sqrt{2}}\frac{M}{\kappa}\Big)}~.$$
Our first task is to quantify the size of $C$. By the proof of Lemma A.4. in \cite{kt2015}, we can see that the constant $C$ depends on a universal constant $K$ derived in Theorem 2.6.7 of \cite{van1996}, which establishes a bound on the covering numbers of a VC subgraph class. Inspection of the proof in \cite{van1996} allows us to conclude that a suitable $K$ is given by $K = 3\sqrt{e}/8$. Plugging this in to the expression for $C$ derived in \cite{kt2015} allows us to conclude that a suitable $C$ is given by $C = 36.17$. Turning to $g(M,\kappa)$, we can calculate that in order for it to surpass $C$ by an order of magnitude, we would need $M/\kappa$ to be about as large as $10^{120}$. This gives us a sense of the relative sizes of the terms in our bound.

\subsection{Supplement to Remark \ref{rem:holdout_test}}
In this subsection we perform a sample splitting exercise to estimate the welfare performance of various decision rules on the JTPA data. To estimate welfare, we split the data into two halves. The first half of the data (the ``estimating sample") is used to compute various decision rules. The second half of the data (the ``auxiliary sample") is used to estimate the welfare generated by each resulting treatment allocation.

Given a sample of size $n$ and a treatment allocation $G$, we estimate welfare using
\[\widehat{W}(G) = E_n\left[\frac{Y_iD_i}{e(X_i)}{\bf 1}\{X_i \in G\} + \frac{Y_i(1 - D_i)}{1 - e(X_i)}{\bf 1}\{X_i \notin G\}\right]~,\]
where $E_n(\cdot)$ is the sample average. We study the welfare performance of three decision rules: EWM on the class $\mathcal{G}_5$ as described in Section \ref{sec:application}, PWM with the holdout penalty on the sieve $\{\mathcal{G}_k\}_{k=1}^5$ as described in Section \ref{sec:application}, and a ``random baseline" which randomly assigns the same fraction of the population as PWM to job training. In Table \ref{tab:est_welfare} we report the estimated welfare computed on both the estimating and auxiliary samples. 
\begin{table}[!ht]
  \caption{Estimated Welfare Comparisons for JTPA Data}   \label{tab:est_welfare}%
  \begin{center}
\begin{threeparttable}
    \begin{tabular}{rccc}
\midrule
\midrule
          & PWM   & EWM   & Random Baseline \\ 
          & & & (average of 1000 draws) \\
    \midrule
        \multicolumn{1}{l}{Estimating Sample} & \$16,221 & \$16,522 & \$15,878 \\
    \multicolumn{1}{l}{Auxiliary Sample} & \$16,402 & \$16,272 & \$16,394 \\
          & (384)  & (395)  & (265) \\
\bottomrule
    \end{tabular}%
\begin{tablenotes}
\footnotesize
\item Standard errors in parentheses (see Remark \ref{rem:table_SE}).
\end{tablenotes}
\end{threeparttable}
\end{center}
\end{table}%

In Table \ref{tab:est_welfare}, we see that EWM has the highest estimated welfare when evaluated on the estimating sample. This is not surprising given that EWM maximizes empirical welfare on the estimating sample by construction. In contrast, when we estimate welfare using the auxiliary sample, we see that PWM has the highest estimated welfare, which shows that PWM can effectively protect against overfitting in this example. However, we stress that this difference was not found to be statistically significant (one-tailed p-value 0.34: see Remark \ref{rem:table_SE} below for details on how our test was constructed). We also note that the performance of PWM on the auxiliary sample is essentially the same as the performance of the random baseline rule; this is also the case when comparing EWM to a similar random baseline (not formally reported). It is possible that this is a feature specific to the monotone policy class (which we view as an exogenous constraint) in this application, and that a more flexible policy class would be able to outperform the random baseline via more selective targeting.

\begin{remark}\label{rem:table_SE}
In Table \ref{tab:est_welfare} we provide standard errors for the estimated welfare computed on the auxiliary sample. These should be interpreted as standard errors for the welfare estimate conditional on the estimated treatment allocation. To compute the standard errors we proceed as follows: given an auxiliary sample of size $m$ and a fixed treatment allocation $G$, it follows immediately by the Central Limit Theorem that
\[\sqrt{m}\left(\widehat{W}(G) - E[\widehat{W}(G)]\right) \xrightarrow{d} N(0, V(G))~,\]
as $m \rightarrow \infty$, where $V(G) = \text{Var}\left(\frac{YD}{e(X)}{\bf 1}\{X \in G\} + \frac{Y(1-D)}{1 - e(X)}{\bf 1}\{X \notin G\})\right)$. Let $\hat{V}(G)$ be the empirical analog of $V(G)$ computed on the auxiliary sample, then the standard error is given by $\sqrt{\hat{V}(G)/m}$.
By a similar argument, we can derive the limiting joint distribution for two distinct policies $G_1$ and $G_2$, which allows us to construct a difference-in-means test for the welfare difference between the two policies.
\end{remark}

\subsection{A Simulation Study}\label{sec:simulations}
In this section we perform a small simulation study to highlight the ability of the PWM rule to reduce $\mathcal{G}$-regret in an empirically relevant setting. We consider a situation where the planner has access to threshold-type allocations over five covariates, as described in Examples  \ref{ex:treatalloc2} and \ref{ex:aseq2}, and wishes to perform best-subset selection. The sieve sequence we consider is the same as in Example \ref{ex:aseq2}, where $\mathcal{G}_k$ is the set of threshold allocations on $k-1$ out of the $5$ covariates. For example, $\mathcal{G}_1$ contains only the allocations $G = \emptyset$ and $G = \mathcal{X}$, which correspond to threshold allocations that use zero covariates, $\mathcal{G}_2$ contains all threshold allocations on one out of the five covariates, etc. We focus here on the setting with five covariates for computational simplicity, but recent work by \cite{chen2016} suggests that solving this problem with ten or more covariates could be feasible in practice.

The problem that the planner faces is choosing how many covariates to use in the allocation: for example suppose that the distribution $P$ is such that some of the available covariates are irrelevant for assigning treatment. Of course, the planner could perform EWM on all the covariates at once, and by the bound in equation (\ref{eq:EWMregretbound}) this is guaranteed to produce small regret in large enough samples. However, if the sample is not large, the planner may be able to achieve a reduction in regret by performing PWM.  Through the lens of Corollary \ref{cor:knownclass}, our results say that PWM should behave \emph{as if} we had performed EWM in the smallest class $\mathcal{G}_k$ that contains all of the relevant covariates. 

We consider the following data generating process: Let $\mathcal{X} = [0,1]^5$, and $$X_i = (X_{1i},X_{2i},...,X_{5i}) \sim (U[0,1])^5~.$$ The potential outcomes for unit $i$ are specified as:
$$Y_i(1) = 50(2X_{2i} - (1-X_{1i})^4 - 0.5 + 0.5(X_{3i} - X_{4i})) + U_{1i}~,$$
$$Y_i(0) = 50(0.5(X_{3i} - X_{4i})) + U_{2i}~,$$
where $U_1$ and $U_2$ are distributed as $U[-80,80]$ random variables which are independent of each other and of $X$. The covariates enter the potential outcomes in three different ways:
\begin{itemize}
\item $X_{5i}$ is an irrelevant covariate; it does not play a role in determining potential outcomes at all. 
\item $X_{3i}$ and $X_{4i}$ affect both treatment and control equally; there will be a nonzero correlation between the observed outcome $Y_i$ and these covariates, but they serve no purpose for treatment assignment. 
\item $X_{1i}$ and $X_{2i}$ \emph{do} serve a purpose for assigning treatment, and both are used in the optimal threshold allocation. See Figure \ref{fig:sb} below.
\end{itemize}
Finally, the propensity score $P(D=1|X)$ is specified to be constant at $0.2$.

\begin{figure}[h]
\includegraphics[scale=0.6]{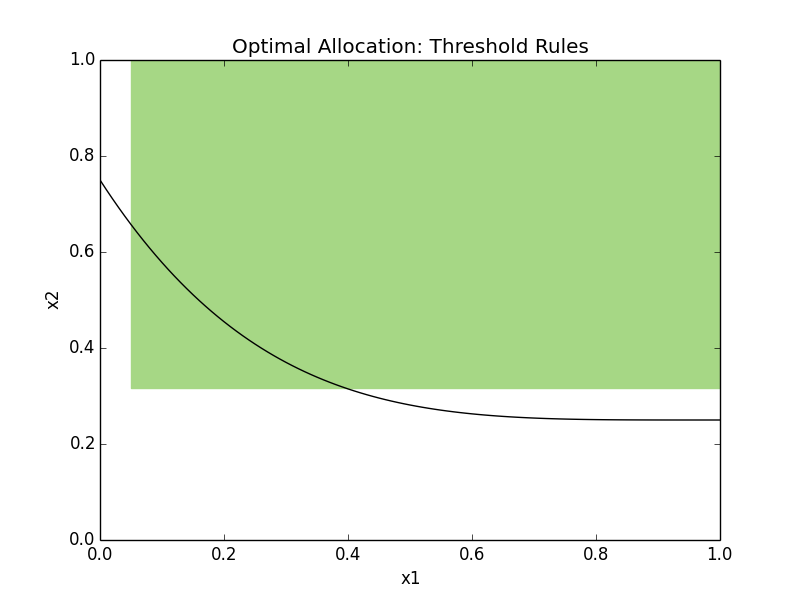} 
\centering
\captionsetup{justification = centering} 
\caption{Shaded in green: the best threshold-allocation for our design. Second-best welfare: 21.3 \\
Traced in black: the boundary of the first-best allocation.\label{fig:sb}}
\end{figure}

To implement PWM we used the holdout penalty, with $3/4$ of our sample designated as the estimating sample. In Appendix \ref{sec:appendixC} we explain in detail how to implement PWM as a mixed integer linear program. 

Our results compare the $\mathcal{G}$-regret of the PWM rule against the regret of performing EWM in  $\mathcal{G}_6$ (which corresponds to the class that uses all five covariates) or performing EWM in $\mathcal{G}_3$ computed using $1000$ Monte Carlo iterations. Recall that $\mathcal{G}_3$ is the smallest class that contains the optimal threshold allocation. In light of Corollary \ref{cor:knownclass}, we would hope that PWM behaves similarly to doing EWM in $\mathcal{G}_3$ directly. In Figure \ref{fig:rr}, we plot the regret of these rules for various sample sizes.

\begin{figure}[h]
\includegraphics[scale=0.7]{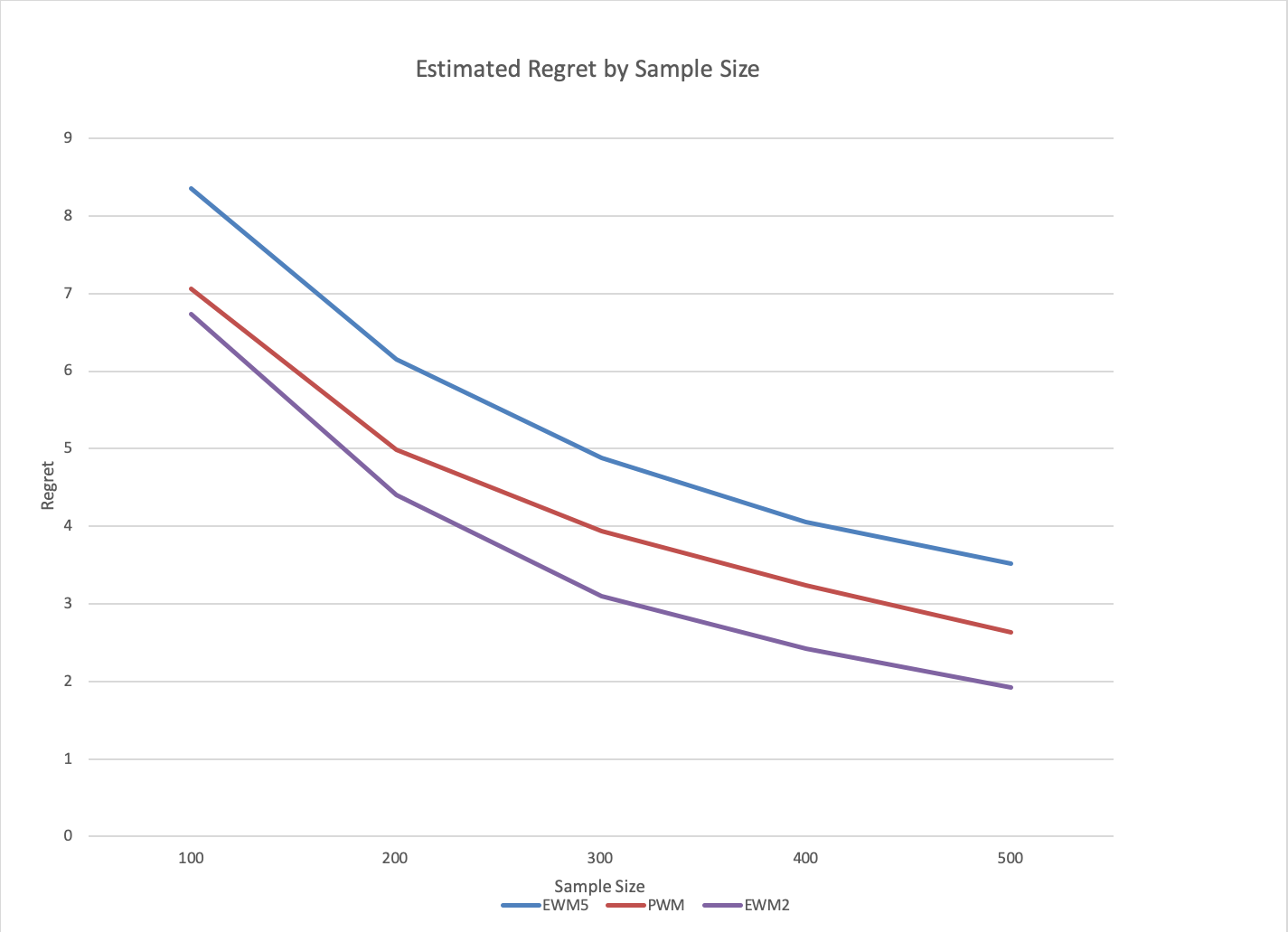} 
\centering
\captionsetup{justification = centering} 
\caption{Estimated regret by sample size. Optimal (second-best) welfare: 21.3. EWM5 corresponds to $\mathcal{G}_6$ (five covariates), EWM2 corresponds to $\mathcal{G}_3$ (two covariates).\label{fig:rr}}
\end{figure}

First we comment on the regret of performing EWM in $\mathcal{G}_6$ (recall that this corresponds to the set of allocations using all five covariates) vs. performing EWM in $\mathcal{G}_3$ (which corresponds to the set of allocations that use two of the five covariates).  As we would expect, regret decreases as sample size increases. Moreover, performing EWM in $\mathcal{G}_6$ results in larger regret at every sample size: performing EWM in $\mathcal{G}_3$ results in a 34\% improvement in regret relative to EWM in $\mathcal{G}_6$ on average, across the sample sizes we consider. 

Next, we comment on the performance of PWM. As we had hoped, the regret of PWM is smaller than the regret of performing EWM in $\mathcal{G}_6$ at every sample size: performing PWM results in a 19.8\% improvement in regret relative to EWM in $\mathcal{G}_6$ on average, across the sample sizes we consider. 
\subsection{Welfare Maximization with Entropy Restrictions on $\mathcal{G}$}\label{sec:entropy_bounds}
In this section we study the treatment choice problem when certain entropy restrictions are imposed on $\mathcal{G}$. First we derive an upper bound on the maximum regret of EWM under assumptions on the bracketing entropy of $\mathcal{G}$: 

Throughout this section let $\mathcal{X} = [0,1]^{d_x}$. Given a class of sets $\mathcal{G}$ of $\mathcal{X}$, let $\mathcal{H} := \{{\bf 1}_G: G \in \mathcal{G}\}$. Let $||\cdot||_p$ be the $L_p(\mu)$ metric on $\mathcal{H}$, where $\mu$ is Lebesgue measure on $\mathcal{X}$. Given $h_1, h_2 \in \mathcal{H}$, with $h_1 \le h_2$, let $[h_1, h_2] := \{h \in \mathcal{H}: h_1 \le h \le h_2\}$. We call the set $[h_1,h_2]$ a \emph{bracket}. Given $\epsilon > 0$, define $N_p^B(\epsilon, \mathcal{H},\mu)$ to be the smallest $k$ such that for some pairs $(h_j^L, h_j^U)$, $j = 1, ... ,k$ $\in \mathcal{H}$, with $h_j^L \le h_j^U$ and $||h_j^U - h_j^L||_p < \epsilon$,
\[\mathcal{H} \subset \bigcup_{j=1}^k[h_j^L,h_j^U]~.\]
We call $H_p^B(\epsilon,\mathcal{H},\mu) := \log N_p^B(\epsilon,\mathcal{H},\mu)$ the $L_p(\mu)$ bracketing entropy \citep[in the sense of][]{alexander1984}.

Given this definition, we impose the following assumption on the bracketing entropy of $\mathcal{G}$:
\begin{assumption}\label{ass:bracket_entropy}
There exist positive constants $K$, $r$ for which
\[H_1^{B}(\epsilon, \mathcal{H}, \mu) \le K\epsilon^{-r}~,\]
for all $\epsilon > 0$.
\end{assumption}

\cite{dudley1999} provides many examples for which this assumption holds. In particular, by Theorem 8.3.2 in \cite{dudley1999}, if $\mathcal{G}$ is the set of monotone allocations in $[0,1]^{d_x}$, then Assumption \ref{ass:bracket_entropy} holds with $r = d_x - 1$ \citep[and the brackets can be constructed in the sense of][]{alexander1984}.

As we have emphasized throughout the paper, to obtain bounds on maximum regret for classes of infinite VC dimension, we must impose additional regularity conditions on the DGP. To that end, we consider the following assumption:

\begin{assumption}\label{ass:bounded_density}
Let $\mathcal{P}_r$ be a set of DGPs such that there exists some constant $A > 0$, where for every distribution in $\mathcal{P}_r$, the distribution of $X$ is continuous with density bounded above by $A$.
\end{assumption}

With this additional regularity condition, we obtain the following upper bound on maximum regret for EWM:

\begin{proposition}\label{prop:upper_bound} Under Assumptions \ref{ass:so}, \ref{ass:BO}, \ref{ass:bracket_entropy}, and \ref{ass:bounded_density}, we have that
$$\sup_{P \in \mathcal{P}_r \cap \mathcal{P}(M,\kappa)} E_{P^n}[W(G^*) - W(\hat{G}_{EWM})] = O(\tau(n))~,$$
where $\tau(n) = n^{-1/2}$ if $r < 1$, $\tau(n) = \log(n)/\sqrt{n}$ if $r = 1$, and $\tau(n) = n^{-1/(1+r)}$ if $r > 1$.
\end{proposition}

Note that this result \emph{does not} assume that the first-best allocation is contained in $\mathcal{G}$. From Proposition \ref{prop:upper_bound} we see that for $r$ sufficiently small, EWM converges at a parametric rate (under suitable regularity conditions). Similar results have been obtained in the classification context by \cite{mammen1999} and \cite{tsybakov2004}.

Next, we present a lower-bound on maximum regret under the following assumption on the $L_1(\mu)$ $\epsilon$-capacity:

Given $\epsilon > 0$, define $D_p(\epsilon, \mathcal{H}, \mu)$ to be the largest $k$ such that there exist functions $h_1, ... h_k \in \mathcal{H}$ with $||h_i - h_j||_p > \epsilon$ for $i \ne j$. We call $H_p(\epsilon,\mathcal{H},\mu) := \log D_p(\epsilon,\mathcal{H}, \mu)$ the $L_p(\mu)$ \emph{epsilon-capacity}.

Given this definition, we impose the following assumption on the $\epsilon$-capacity of $\mathcal{G}$:

\begin{assumption}\label{ass:entropy}
There exist positive constants $K_1,$ $K_2$, $\epsilon_1 > 0$, $r \ge 1$ such that 
\[K_2\epsilon^{-r} \le H_1(\epsilon, \mathcal{H}, \mu) \le K_1\epsilon^{-r}~,\]
for all $0 < \epsilon \le \epsilon_1$.
\end{assumption}

It can be shown that if $\mathcal{G}$ satisfies Assumption \ref{ass:bracket_entropy}, then the upper bound in Assumption \ref{ass:entropy} will also hold. However, the reverse may not be true. \cite{dudley1999} provides many examples for which Assumption \ref{ass:entropy} holds, and in particular it holds for the set of monotone allocations in $[0,1]^{d_x}$ with $r = d_x - 1$ (see Theorem 8.3.2). 

With this assumption we obtain the following lower bound on maximum regret:

\begin{proposition}\label{prop:lower_bound}
Let $\mathcal{P}^*(\mu) \subset \mathcal{P}(M,\kappa)$ be the set of DGPs such that the marginal distribution of $X$ is $\mu$, and $G^* = G^*_{FB}$.
Under Assumption \ref{ass:entropy}, there exists a positive constant $B$ (which depends on $M$, $K_1, K_2$, $r$), such that
\[ \inf_{\hat{G}}\sup_{P \in \mathcal{P}^*(\mu)} E_{P^n}[W(G^*) - W(\hat{G})] \ge Bn^{-1/(1+r)}~, \]
for all $n \ge 1$.
\end{proposition}

For classes $\mathcal{G}$ such that Assumptions \ref{ass:bracket_entropy} and \ref{ass:entropy} \emph{both} hold, Propositions \ref{prop:upper_bound} and \ref{prop:lower_bound} immediately imply the following rate optimality result for EWM:

\begin{corollary}\label{cor:rate_optimal}
Given Assumptions \ref{ass:bracket_entropy}, \ref{ass:bounded_density}, and \ref{ass:entropy}, EWM is rate-optimal over $\mathcal{P}_r \cap \mathcal{P}(M,\kappa)$ for $r > 1$ and rate-optimal up to a $\log$ factor for $r = 1$. 
\end{corollary}
\begin{proof}
This follows immediately from the fact that $\mathcal{P}^*(\mu) \subset \mathcal{P}_r \cap \mathcal{P}(M,\kappa)$, and hence
\[\inf_{\hat{G}}\sup_{P \in \mathcal{P}_r \cap \mathcal{P}(M,\kappa)} E_{P^n}[W(G^*) - W(\hat{G})] \ge \inf_{\hat{G}}\sup_{P \in \mathcal{P}^*(\mu)} E_{P^n}[W(G^*) - W(\hat{G})]~.\]
\end{proof}

As we remarked above, for the set of monotone allocations on $[0,1]^2$, Assumptions \ref{ass:bracket_entropy} and \ref{ass:entropy} hold with $r = 1$. Hence we can conclude that EWM is rate-optimal up to a $\log$-factor for monotone allocations when the distribution of $\mathcal{X}$ is continuous with a bounded density. Note that Corollary \ref{cor:rate_optimal} only establishes rate optimality when $r$ is sufficiently large. For $r < 1$, the lower bound presented in Proposition \ref{prop:lower_bound} is certainly too loose: the set of DGPs used in the proof of Proposition \ref{prop:lower_bound} impose a ``hard margin", and hence converge much faster than the parametric rate when $r < 1$. 

\begin{remark}\label{rem:triv_optimal}
It can be shown that the PWM procedure implemented as in Section \ref{sec:application} can also achieve the rate established in Corollary \ref{cor:rate_optimal} (up to a log factor). To see why, note that by using arguments similar to those used in the proof of Propositions \ref{prop:upper_bound}, it can be shown that for the holdout penalty,
\[\sup_{P \in \mathcal{P}_r\cap\mathcal{P}(M,\kappa)}E[C_m(k)] = O\left(\frac{\log(n)}{\sqrt{n}}\right)~.\]
Combining this result with Proposition \ref{prop:monbias} and Corollary \ref{cor:regretrate} we get that the maximum regret of PWM is bounded above by (up to constants),
\[\frac{\log(n)}{\sqrt{n}} + \inf_k\left(2^{-k} + \sqrt{\frac{k}{n}}\right)~,\]
whose rate of convergence is dominated by the leading term.
\end{remark}

\section{Computational Details}\label{sec:appendixC}
In this section we provide details on how we perform the computations of Section \ref{sec:application} and Appendix \ref{sec:simulations}. All of our work is implemented in Python paired with Gurobi. We begin with Section \ref{sec:application}, then proceed to Appendix \ref{sec:simulations}.

\subsection{Application Details}
First we describe how to compute each $\hat{G}_{n,k}$ to solve PWM over monotone allocations. Recall the definition of $\psi_{T,j}(x)$ as defined in Example \ref{ex:aseq3}. We modify this definition to accommodate the fact that our covariates do not lie in the unit interval. In particular, we restrict ourselves to levels of education that lie in the interval $[5,20]$, which leads to the following modification.

\[ \psi_{T,j}(x) = \begin{cases} 
      1 - |\frac{T}{15}(x-5) - j|, & x \in \big[\frac{j-1}{T/15}+5,\frac{j+1}{T/15}+5\big] \cap [5,20] \\
      0, & \text{otherwise}~.
   \end{cases}
\]

Let $\Theta_T = \begin{bmatrix}\theta_0 & \theta_1 &\cdots &\theta_T\end{bmatrix}'$ and let $\bar{\Theta}_T = \begin{bmatrix} -1 & \theta_0 &\theta_1& \cdots &\theta_T\end{bmatrix}'$. Let our two dimensional covariate be denoted as $x = (x^{(1)},x^{(2)})$ where $x^{(1)}$ is level of education and $x^{(2)}$ is previous earnings. Let $$\Psi_T(x) = \begin{bmatrix}x^{(2)} & \psi_{T,0}(x^{(1)}) &\cdots &\psi_{T,T}(x^{(1)})\end{bmatrix}'~.$$ To compute $\hat{G}_{n,k}$ we solve the following mixed integer linear program (MILP), which modifies the MILP described in \cite{kt2015} for ``Single Linear Index Rules":
\begin{align*}
\underset{\substack{\theta_0,\theta_1,...,\theta_T,\\
z_1,...,z_n}}{\text{max}} \qquad &
   \sum_{i=1}^n \tau_i \cdot z_i
\\[2ex]
\text{subject to}\qquad & \frac{\bar{\Theta}_T'\Psi_T(x_i)}{c_{iT}} < z_i \leq \frac{\bar{\Theta}_T'\Psi_T(x_i)}{c_{iT}} + 1, \; i = 1, \ldots, n \\
& z_i \in \{0,1\}, \; i= 1, \ldots, n\\
& D_T\Theta_T \le 0
\end{align*}
where $T = 2^{k-1}$, $\tau_i$ is as defined in equation (\ref{eq:EWMobjective}), $c_T$ is an appropriate constant (to be discussed in the following sentence), and $D_T$ is the differentiation matrix as defined in Example \ref{ex:aseq3}. $c_{iT}$ is a constant chosen such that $c_{iT} > \sup_{\Theta_T} |\bar{\Theta}_T'\Psi_T(x_i)|$, which allows us to formulate a set of what are known as ``big-M" constraints. To implement such a constraint it must necessarily be the case that $\Theta_T$ is bounded, so in order to implement PWM we also include an implicit (very large) bound on the possible treatment allocations.\footnotemark[7]

\footnotetext[7]{Big-M constraints have the potential to cause numerical instabilities when solving MILPs that are poorly formulated. We found that it was important to ensure that the covariates are scaled to within the same order of magnitude and that the {\tt IntFeasTol} and {\tt FeasibilityTol} parameters in Gurobi were set to their smallest possible values.}

The first two sets of constraints impose that the treatment allocation result in a piecewise linear boundary, the third set of constraints impose that this boundary is monotone. The strength of this formulation is that it imposes monotonicity via a \emph{linear} constraint, which allows us to solve the problem as a MILP.

 \subsection{Simulation Details}
We describe a MILP to compute each $\hat{G}_{n,k}$ over threshold allocations on $d$ covariates. Define $x$ to be a $(d+1)$-dimensional vector where $x = (1, x^{(1)}, x^{(2)}, ..., x^{(d)})$, with the last $d$ components denoting the $d$ covariates, and suppose $x \in [0,1]^{d+1}$, which is the case in the simulation design. We define the threshold $\beta_k$ on covariate $x^{(k)}$ to be a $(d+1)$-dimensional vector such that the first component is in $[-1,1]$, all other components other than the $(k+1)$st are zero, and the $(k+1)$st component is one of $\{-1, 0, 1\}$. Let $A = \{1,2,...,d\}$ index the dimension of the threshold. We modify the MILP described in \cite{kt2015} for ``Multiple Linear Index Rules":

\begin{align*}
\underset{\substack{\{\beta_a\}_{a\in A},\\
\{z^a_1,...,z^a_n\}_{a\in A},z^*_1,...,z^*_n}}{\text{max}} \qquad &
   \sum_{i=1}^n \tau_i \cdot z_i^*
\\[2ex]
\text{subject to}\qquad & \frac{x_i'\beta_a}{c} < z^a_i \leq \frac{x_i'\beta_a}{c} + 1, \; i = 1, \ldots, n, \; a\in A \\
& 1- |A| + \sum_{a \in A}z^a_i \le z_i^* \le \frac{1}{|A|}\sum_{a \in A}z^a_i, \; i = 1, \ldots, n\\
& \beta_a^{(1)} \in [-1,1], \; a \in A\\
& \beta_a^{(j)} = 0, \; j > 1, j \ne a+1, \; a \in A \\
& \sum_{a \in A} e_a = k \\
& -e_a \le \beta_a^{(1)} \le e_a, \; a \in A\\
& \beta_a^{(a+1)} = y_{a, 1} - y_{a, 2}, \; a \in A\\
& y_{a, 1} + y_{a, 2} = e_a, \; a \in A\\
& \{z^a_i\}_{a \in A}, z^*_i \in \{0,1\}, \; i= 1, \ldots, n\\
& \{e_a\}_{a\in A} \in \{0, 1\}, \; a \in A\\
&\{y_{a, 1}\}_{a\in A}, \{y_{a, 2}\}_{a\in A} \in \{0, 1\}, \; a \in A
\end{align*}

The constraints serve the following roles: the first two constraints enforce the assignment of observations to treatment, the next two constraints enforce part of the structure of the threshold allocation, the fifth constraint specifies that only $k$ thresholds can be used, and the three subsequent constraints enforce this. Again we require an appropriately chosen constant $c$ to implement a set of big-M constraints, but in this case the choice is straightforward: $c = d+2$ will suffice since this guarantees that $c > x_i'\beta_a$ for any possible $x_i$ and $\beta_a$, by construction.

\begin{remark}\label{rem:speed-up}
Solving the above program for the simulation design of Appendix \ref{sec:simulations} with a sample size of $2000$ took approximately one hour and fifteen minutes on a 2018 iMac.  In practice, the solution of this MILP could potentially be further optimized using the improvements developed in \cite{bertsimas2016} and \cite{chen2016}. Alternatively, careful implementation of a direct parameter search could also considered: see for example the work in \cite{zhou2018} using a tree-based policy class.
\end{remark}

\section{Proofs for Appendix B}\label{sec:entropy_proofs}
\noindent {\bf Proof for Proposition \ref{prop:upper_bound}}
\begin{proof}
We follow the general strategy of Theorem 1 in \cite{mammen1999}. Let $\bar{W}(\cdot) = (\kappa/M)W(\cdot)$ be a normalized version of $W(\cdot)$. Let $G^*$ be a maximizer of $\bar{W}(\cdot)$ in $\mathcal{G}$. Let $\mathcal{P} = \mathcal{P}_r \cap \mathcal{P}(M,\kappa)$ and define
$$T_n = \sqrt{n}\frac{\bar{W}(G^*)-\bar{W}(\hat{G}) - (\bar{W}_n(G^*) - \bar{W}_n(\hat{G}))}{q_n}~,$$
where $q_n = 1$ if $r < 1$, $q_n = \log(n)$ if $r = 1$ and $q_n = n^{(r-1)/2(r+1)}$ if $r > 1$. By the definition of $\hat{G}$, and $G^*$ we have that $\bar{W}_n(\hat{G}) \ge \bar{W}_n(G^*)$, $\bar{W}(\hat{G}) \le \bar{W}(G^*)$, and hence we have that
$$0 \le \sqrt{n}q_n^{-1}\left(\bar{W}(G^*) - \bar{W}(\hat{G})\right) \le T_n~.$$
Now we argue that $E[T_n]$ is uniformly bounded over $\mathcal{P}$ for $n$ sufficiently large, which given the definition of $q_n$ implies the statement of the theorem. To that end, note that
$$E[T_n] \le E[S_n]~,$$
where
$$S_n  = \sup_{G \in \mathcal{G}}\sqrt{n}q_n^{-1}|\bar{W}(G^*) - \bar{W}(G) - (\bar{W}_n(G^*) - \bar{W}_n(G))|$$
$$ = \sup_{G \in \mathcal{G}}\sqrt{n}q_n^{-1}\left|\frac{1}{n}\sum_{i=1}^n\left(\bar{g}(Z_i)({\bf 1}\{X_i \in G^*\} - {\bf 1}\{X_i \in G\}) - E[\bar{g}(Z_i)({\bf 1}\{X_i \in G^*\} - {\bf 1}\{X_i \in G\})]\right)\right|~,$$
with
$$\bar{g}(Z_i) = \frac{\kappa}{M}g(Y_i, D_i, X_i) = \frac{\kappa}{M}\left(\frac{Y_iD_i}{e(X_i)} - \frac{Y_i(1-D_i)}{1 - e(X_i)}\right)~.$$
For the case $r < 1$, we can invoke Lemma \ref{lem:small_tail_ineq} to conclude immediately that:
\[\sup_{P \in \mathcal{P}}E[S_n] = O(1)~,\]
so we are done. 
For the case $r \ge 1$, note that $S_n \le 2\sqrt{n}/q_n$, which gives that for any $D > 0$,
\[E[S_n] \le D + 2\frac{\sqrt{n}}{q_n}P(S_n > D)~,\]
hence we can apply Corollary \ref{cor:tail_ineq} to the last probability to conclude that $\sup_{P\in\mathcal{P}}E[S_n]  = O(1)$. Let $\mathcal{\widetilde{F}} = \{\bar{g}\cdot({\bf 1}_{G^*} - {\bf 1}_G) : G \in \mathcal{G}\}$, then for an appropriate choice of $D$ (which depends on $P$ \emph{only} through $K$ and $r$, and $A$), Corollary \ref{cor:tail_ineq} gives
\[P(S_n > D) \le C\exp(-D^2q_n^2)~,\]
for some constant $C$ which depends on $P$ \emph{only} through $K$, $r$, and $A$. Hence we can conclude that
\[\sup_{P\in\mathcal{P}}E[T_n] \le \sup_{P\in\mathcal{P}}E[S_n]  = O(1)~,\]
as desired.
\end{proof}

\noindent{\bf Proof of Proposition \ref{prop:lower_bound}}
\begin{proof}
Define
\[L_n : =  \inf_{\hat{G}}\sup_{P \in \mathcal{P}^*(\mu)} E_{P^n}[W(G^*) - W(\hat{G})]~.\]
We follow the general strategy of Theorem 6 in \cite{massart2006}. For every $h \in \mathcal{H} = \{{\bf 1}_G : G \in \mathcal{G}\}$, set $\tau_h(x) = (M/4)(2h(x) - 1)$, $\gamma_h(x) = (2/M)\tau_h(x)$, and define $P_h$ as the joint distribution on $\mathcal{X}\times\{0,1\}\times\mathcal{Y}^2$ (i.e. the set of realizations of $(X,D,Y(1),Y(0))$) such that under $P_h$, $X$ has distribution $\mu$, 
\[
  Y(1)|\{X=x\} =
  \begin{cases}
                                   \frac{M}{2} & \text{with prob. $\frac{1 + \gamma_f(x)}{2}$} \\
                                   -\frac{M}{2} & \text{with prob. $\frac{1 - \gamma_f(x)}{2}$} \\
  \end{cases}
\]
$Y(0)|\{X=x\} = 0$, and $D$ is Bernoulli$(0.5)$ independent of everything else. Note that by construction we have that $\tau_h(x) = E_{P_h}[Y(1) - Y(0)|X=x] = \tau(x)$, $h$ describes the first-best decision rule under $P_h$, and $P_h \in \mathcal{P}^*(\mu)$. 

Next, let $\mathcal{C}$ be a finite subset of  $\mathcal{H}$, then it follows that:
$$\inf_{\hat{G}}\sup_{P \in \mathcal{P}^*(\mu)} E_{P^n}[W(G^*) - W(\hat{G})] \ge \inf_{\hat{G}}\sup_{h \in \mathcal{C}} E_h[W(G^*) - W(\hat{G})]~,$$
where $E_h = E_{P^n_h}$. Since, under $P_h$, $G^*$ is the first best allocation by construction, we get that
\[W(G^*) - W(G) = \int_{G^* \Delta G}|\tau(X)|dP_X~,\]
for any $G \in \mathcal{G}$. Hence it follows that, given the construction of $\tau_h$:
\[W(G^*) -W(G) = \frac{M}{4} \mu(G^* \Delta G)~.\]
Putting all this together and using the fact that $h = {\bf 1}_{G^*}$ under $P_h$:
\[L_n \ge \inf_{\hat{h}\in\mathcal{H}}\sup_{h \in \mathcal{C}}\frac{M}{4}E_{h}[||h - \hat{h}||_1]~,\]
where
$\hat{h} = {\bf 1}_{\hat{G}}$, and $||\cdot||_1$ is the $L_1(\mu)$ norm. Define the statistic
$$\tilde{h} = \arg\min_{h \in \mathcal{C}} ||h - \hat{h}||_1~,$$
then by the triangle inequality it follows that 
$$\inf_{\hat{h}\in\mathcal{H}}\sup_{h \in \mathcal{C}}E_{h}[||\hat{h} - h||_1] \ge \frac{1}{2}\inf_{\tilde{h}\in\mathcal{C}}\sup_{h \in \mathcal{C}}E_{h}[||\tilde{h} - h||_1]~.$$
We now construct the appropriate set $\mathcal{C}$. Let $\mathcal{C}'$ be an $\epsilon$-packing set of $\mathcal{H}$, and let $\mathcal{C}''$ be a $C\epsilon$ cover of $\mathcal{H}$ for some $C >1$, $\epsilon > 0$ to be specified later. By definition, each $h \in \mathcal{C}'$ lies in some ball of radius $C\epsilon$ centered at a point in $\mathcal{C}''$. So by taking $\mathcal{C}$ to be the intersection of $\mathcal{C}'$ with such a ball in $\mathcal{C}''$  which results in a set of maximal cardinality, we get that for $h_1, h_2 \in \mathcal{C}$, where $h_1 \ne h_2$, 
$$\epsilon \le ||h_1 - h_2||_1 \le C\epsilon~,$$ 
and moreover,
$$\log(|\mathcal{C}|) \ge H_1(\epsilon,\mathcal{H},\mu) - H_1(C\epsilon,\mathcal{H},\mu)~.$$
To see this, note that since we have constructed $\mathcal{C}$ to have maximal cardinality, it must be the case that
$$|\mathcal{C}| \ge \frac{|\mathcal{C}'|}{|\mathcal{C}''|}~,$$
and by definition, $|\mathcal{C}'| = H_1(\epsilon,\mathcal{H},\mu)$, $|\mathcal{C}''| \le H_1(C\epsilon,\mathcal{H},\mu)$.

Now, by Markov's inequality,
$$\inf_{\tilde{h}\in\mathcal{C}}\sup_{h \in \mathcal{C}}E_{h}[||\tilde{h} - h||_1] \ge \epsilon\inf_{\tilde{h} \in \mathcal{C}}(1 - \inf_{h\in\mathcal{C}}P^n_h(\tilde{h} = h))~,$$
and hence by Lemma 8 in \cite{massart2006},

$$L_n \ge \frac{M\epsilon}{8}(1 - \alpha)~,$$

where $\alpha := 0.71$, as long as $\mathcal{\bar{K}} \le \alpha\log(|\mathcal{C}|)$, where, for some fixed $h_0 \in \mathcal{C}$:
$$\mathcal{\bar{K}} := \frac{1}{|\mathcal{C}| - 1}\sum_{h \in \mathcal{C}, h \ne h_0}\mathcal{K}(P_h^n,P_{h_0}^n)$$
$$ = \frac{n}{|\mathcal{C}| - 1}\sum_{h \in \mathcal{C}, h \ne h_0}\mathcal{K}(P_h,P_{h_0})~,$$
and $\mathcal{K}(\cdot,\cdot)$ is the Kullback-Leibler divergence. By Lemma \ref{lemma:KL_calculate} we have that
$$\mathcal{\bar{K}} \le n\sup_{h \in \mathcal{C}}||h - h_0||_1 \le n\epsilon~,$$
where the last inequality follows by the construction of $\mathcal{C}$. 

Again by the construction of $\mathcal{C}$, we can choose $C$ such that there exists some positive constant $C_1$ for which $\log(|\mathcal{C}|) \ge C_1\epsilon^{-r}$ for $\epsilon  \le \epsilon_1$, and therefore
$$\frac{\mathcal{\bar{K}}}{\log|\mathcal{C}|} \le \frac{n}{C_1}\epsilon^{1+r}~.$$
Hence we can conclude that $L_n \ge (M\epsilon/8)(1 - \alpha)$ whenever
$$\frac{n}{C_1}\epsilon^{1+r} \le \alpha~,$$
that is,
$$\epsilon \le \left(\alpha C_1\right)^{1/(1+r)}n^{-1/(1+r)}~.$$
Now, we may also choose $C$ such that $\alpha C_1\le \epsilon_1^{1+r}$, so that the constraint $\epsilon \le \epsilon_1$ is satisfied if we set
$$\epsilon = \left(\alpha C_1\right)^{1/(1+r)}n^{-1/(1+r)}~.$$
Hence we have that
$$L_n =  \inf_{\hat{G}}\sup_{P \in \mathcal{P}^*(\mu)} E_{P^n}[W(G^*) - W(\hat{G})]  \ge An^{-1/(1+r)}~,$$
where $A$ is a constant which depends on $K_1$, $K_2$, $M$, and $r$ as desired.
\end{proof}

\begin{proposition}\label{prop:alexander_ineq}
Let $\{Z_i\}_{i=1}^n$ be a sequence of i.i.d random vectors with distribution $P$. Let $Z = (Z_1,Z_2)$, and let $\mathcal{F}$ be a class of real-valued functions of the form $f(z) = f(z_1,z_2) = g(z)\cdot h(z_2)$, where $h \in \mathcal{H}$, $\mathcal{H}$ is a class of functions with values in $\{0,1\}$, and $g$ is some fixed real-valued function (which may depend on $P$) such that $|g| \le 1$. Let $P_2$ be the marginal distribution of $Z_2$ and suppose $\mathcal{H}$ satisfies
 \begin{equation}\label{eq:2brackets}
  H_2^B(\epsilon, \mathcal{H}, P_2) \le K\epsilon^{-\ell}~,
  \end{equation}
for some constants $K > 0$, $\ell \ge 2$, for all $\epsilon > 0$. Then there exist positive constants $C_1, C_2, C_3, C_4$ (which depend only on $K$ and $\ell$) such that if
 \begin{equation}\label{eq:xi_upper}
  \xi \le \frac{\sqrt{n}}{128}~,
  \end{equation}
and 
 \begin{equation}\label{eq:xi_lower}
\xi \ge \begin{cases} 
          C_1n^{(\ell - 2)/2(\ell+2)},& \ell \ge 2 \\
          C_2\log \max(n,e), & \ell = 2 \\
           \end{cases}
  \end{equation}
then
\[P^n\left(\sup_{f \in \mathcal{F}} \left|\frac{1}{\sqrt{n}}\sum_{i=1}^n[f(Z_i) - Ef(Z_i)]\right| > \xi\right) \le C_4\exp(-\xi^2)~.\]
\end{proposition}

\begin{proof}
We follow the general strategy of Theorem 2.3 and Corollary 2.4 in \cite{alexander1984}. Let
\[\nu_n(f) = \frac{1}{\sqrt{n}}\sum_{i=1}^n[f(Z_i) - Ef(Z_i)]~.\]
We begin with a series of definitions. Let $\delta_0 > \delta_1 > ... > \delta_N > 0$ be a sequence of real numbers where $\{\delta_j\}_j$ and $N$ are to be specified precisely later in the proof. For every $0 \le j \le N$, there exists a set of $\delta_j$-brackets $\mathcal{H}^B_j$ of $\mathcal{H}$ such that $|\mathcal{H}^B_j| = N_2^B(\delta_j, \mathcal{H}, P_2)$. For each $h \in \mathcal{H}$ let $(h_j^L, h_j^U) \in \mathcal{H}^B_j$ be the brackets such that $h_j^L \le h \le h_j^U$ and $||h_j^H - h_j^L||_2 < \delta_j$. Define the function $H_{\theta}(\cdot): (0,\infty) \rightarrow [0, \infty)$ as follows:

    \[ H_{\theta}(u) = \begin{cases} 
          Ku^{-\ell}, &u \le 1 \\
          -\frac{K}{\theta}u + \frac{K(1+\theta)}{\theta}, & u \in (1, 1+\theta] \\
          0, & u > 1 + \theta~.
       \end{cases}
    \]
Note that by construction $H_{\theta}$ is continuous on $[0, \infty)$, and by Assumption (\ref{eq:2brackets}) and the fact that $\mathcal{H}$ has diameter $1$ by definition, $N_2^B(\delta_j, \mathcal{H}, P_2) \le \exp(H_{\theta}(\delta_j))$ for $\theta > 0$. From now on, we fix such a $\theta > 0$, and suppress $\theta$ from our notation. For any $f \in \mathcal{F}$, we have by definition that $f = g\cdot h$ for some $h \in \mathcal{H}$, and so given the bracket $(h_j^L, h_j^U)$, define $f_j^L := g\cdot h_j^L{\bf 1}\{g \ge 0\} + g\cdot h_j^U{\bf 1}\{g < 0\}$, and $f_j^U := g \cdot h_j^U{\bf 1}\{g \ge 0\} + g\cdot h_j^L{\bf 1}\{g < 0\}$, and note that by construction $(f_j^L, f_j^U)$ is a bracket for $f$. Let $f_j = f_j^L$, and let $\mathcal{F}_j = \{f_j : f \in \mathcal{F}\}$, then $|\mathcal{F}_j| \le \exp(H(\delta_j))$ and for every $f \in \mathcal{F}$, $||f - f_j||_2 < \delta_j$. 

By a standard chaining argument:
\[P\left(\sup_{f \in \mathcal{F}} |\nu_n(f)| > \xi\right) \le R_1 + R_2 + R_3~,\]
where
\[R_1 = |\mathcal{F}_0|\sup_{f \in \mathcal{F}}P\left(|\nu_n(f)| > \frac{7}{8}\xi\right)~,\]
\[R_2 = \sum_{j=0}^{N-1}|\mathcal{F}_j||\mathcal{F}_{j+1}|\sup_{f \in \mathcal{F}}P\left(|\nu_n(f_j - f_{j+1})| > \eta_j\right)~,\]
\[R_3 = P\left(\sup_{f \in \mathcal{F}}|\nu_n(f_N - f)| > \frac{\xi}{16} + \eta_N\right)~,\]
where $\{\eta_j\}_j$ are chosen such that $\sum_{j=0}^N\eta_j \le \xi/16$ and will be specified precisely later in the proof. We now choose $\{\delta_j\}_j$, $\{\eta_j\}_j$ and $N$ to make these three terms sufficiently small.

First consider $R_1$. Take $\delta_0$ such that $H(\delta_0) = \xi^2/4$. Then by Hoeffding's inequality,
\[R_1 \le 2|\mathcal{F}_0|\exp\left(-2\left(\frac{7}{8}\xi\right)^2\right) \le 2\exp\left(-\xi^2\right)~.\]
Next, we develop a bound on $R_2$. Since by construction $||f_j - f_{j+1}||_2 \le 2\delta_j$, it follows by repeated applications of Bennet's inequality (see Lemma \ref{lemma:bennet}) that
\[R_2 \le \sum_{j=0}^{N-1}2\exp(2H(\delta_{j+1}))\exp(-\psi_1(\eta_j,n,4\delta_j^2))~,\]
where $\psi_1$ has the properties described in Lemma \ref{lemma:bennet}.
Next, consider $R_3$. Given the construction of $\mathcal{F}_N$ and writing $f = g\cdot h$:
\[|\nu_n(f_N - f)| \le |\nu_n(f_N^U - f_N^L)| + 2\sqrt{n}||f_N^U - f_N^L||_1\]
\[\le |\nu_n(f_N^U - f_N^L)| + 2\sqrt{n}\delta_N^2~,\]
since $||f_N^U - f_N^L||_1 \le ||h_N^U - h_N^L||_1 \le \delta_N^2$ (where here we use the fact that $h_N^U, h_N^L$ take values in $\{0, 1\}$).
Take $\delta_N \le s:= (\xi/(32\sqrt{n}))^{1/2}$, then by the above derivation and Bennet's inequality,
\[R_3 \le P\left(\sup_{f\in\mathcal{F}}|\nu_n(f_N^U - f_N^L)| > \eta_N\right)\]
\[\le 2|\mathcal{F}_N|\exp(-\psi_1(\eta_N,n,\delta^2_N))~.\]
To complete our bounds on $R_2$ and $R_3$ we consider two separate cases. First suppose $\delta_0 \le s$ as defined above. Then by taking $N = 0$ and $\eta_0 = \xi/16$, we have that $R_2 = 0$ and 
\[R_3 \le 2|\mathcal{F}_0|\exp(-\psi_1(\eta_0,n,\delta^2_0))~.\]
Since $\delta_0 \le s$, we have that
\[2\eta_0 = \frac{\xi}{8} = 4\sqrt{n}\left(\frac{\xi}{32\sqrt{n}}\right) \ge 4\sqrt{n}\delta_0^2~,\]
and hence by the properties of $\psi_1$:
\[\psi_1(\eta_0, n, \delta^2_0) \ge \frac{1}{4}\psi_1(2\eta_0,n,\delta^2_0) \ge \frac{1}{4}\eta_0\sqrt{n}~.\]
Using Assumption (\ref{eq:xi_upper}), we can then conclude that
\[\psi_1(\eta_0,n,\delta^2_0) \ge \frac{1}{4}\eta_0\sqrt{n} = \frac{\xi}{64}\sqrt{n} \ge 2\xi^2~.\]
By the definition of $\delta_0$,
\[|\mathcal{F}_0| \le \exp\left(\frac{\xi^2}{4}\right)~,\]
so that putting everything together yields
\[R_2 + R_3 \le 4 \exp(-\xi^2)~.\]
Next consider the case where $\delta_0 > s$. Let $N$ and $\{\delta_j\}_{j=1}^N$ be as in Lemma \ref{lemma:alexander_3.1}, where $t = \delta_0$, and $s$ is as defined above. Let $\eta_j = 8\sqrt{2}\delta_jH(\delta_{j+1})^{1/2}$ for $0 \le j < N$, $\eta_N = 8\sqrt{2}\delta_NH(\delta_{N})^{1/2}$. Then by Lemma \ref{lemma:alexander_3.1}:
\[\sum_{j=0}^N\eta_j = 8\sqrt{2}\sum_{j=0}^N\delta_jH(\delta_{j+1})^{1/2} \le 64\sqrt{2}\int_{s/4}^{\delta_0}H(u)^{1/2}du~.\]
Now, by the definition of $H(\cdot)$, we have that for $0 < s < t$,
    \[ \int_{s}^tH(u)^{1/2}du \le \begin{cases} 
          K^{1/2}\log(1/s), & \ell = 2 \hspace{1mm} \text{and} \hspace{1mm} t \le 1 \\
          2K^{1/2}(\ell-2)^{-1}s^{(2-\ell)/2}, & \ell > 2 
       \end{cases}
    \]
and so by combining this with Assumptions (\ref{eq:xi_upper}) and (\ref{eq:xi_lower}) (with $C_1$ sufficiently large), it can be shown that
\[\sum_{j=0}^N\eta_j \le \frac{\xi}{16}~,\]
and hence our choice of $\{\eta_j\}_j$ is consistent with our construction (note that when $\ell = 2$, the above inequality only applies when $t \le 1$, however we can argue using $\delta_0 \le 1 + \theta$ that $\sum\eta_j \le \xi/16 + C'\theta$ for some constant $C' > 0$, for all $\theta > 0$, and hence our result holds for $P(\sup_{f} |\nu_n(f)| \ge \xi + C'\theta)$ where $\theta > 0$ can be made arbitrarily small). By Assumption (\ref{eq:xi_lower}) (with $C_1$ sufficiently large), it can also be shown that
\[H(s) \le \frac{\xi\sqrt{n}}{16}~,\]
and hence it follows that
\[\left(\frac{\eta_j}{4\delta_j^2\sqrt{n}}\right)^2 < \frac{8H(s)}{ns^2} \le 16~,\]
so that by the properties of $\psi_1$, 
\[\psi_1(\eta_j, n, 4\delta_j^2) \ge \frac{\eta_j^2}{16\delta_j^2}~.\]
Using our bound on $R_2$ we can then conclude that
\[R_2 \le \sum_{j=0}^{N-1}2\exp\left(2H(\delta_{j+1}) - \frac{\eta_j^2}{16\delta_j^2}\right) \le \sum_{j=0}^{N-1}2\exp\left(-4^{j+1}H(\delta_0)\right)~,\]
Similarly, we can argue that
\[R_3 \le 2\exp\left(-4^{N+1}H(\delta_0)\right)~.\]
Putting these together, and using Assumption (\ref{eq:xi_lower}):
\[R_2 + R_3 \le \sum_{j=0}^{\infty}2\exp\left(-4^{j+1}H(\delta_0)\right) \le C\exp(-\xi^2)~,\]
where $C$ is a constant that depends only on $K$ and $\ell$.
\end{proof}

\begin{corollary}\label{cor:tail_ineq}
Let $\{Z_i\}_{i=1}^n$ be a sequence of i.i.d random vectors with distribution $P$. Let $Z = (Z_1,Z_2)$, and let $\mathcal{F}$ be a class of real-valued functions of the form $f(z) = f(z_1,z_2) = g(z)\cdot h(z_2)$, where $h \in \mathcal{H}$, $\mathcal{H}$ is a class of functions with values in $\{0,1\}$, and $g$ is some fixed real-valued function (which may depend on $P$) such that $|g| \le 1$. Suppose $\mathcal{H}$ satisfies Assumption \ref{ass:bracket_entropy}, and suppose that $P_2$, the marginal distribution of $Z_2$, has a density with respect to Lebesgue measure bounded above by $A$. Then there exist positive constants $D_1, D_2, D_3$ (which depend only on $K, r$, and $A$) such that for $n \ge 3$:
 \[P^n\left(\sup_{f \in \mathcal{F}} \left|\frac{1}{\sqrt{n}}\sum_{i=1}^n[f(Z_i) - Ef(Z_i)]\right| > xq_n \right) \le D_3\exp(-x^2q_n^2)~,\]
 for $D_1 \le x \le D_2\sqrt{n}/q_n$, where
\[
q_n = \begin{cases} 
          \log n, & r = 1 \\
          n^{(r - 1)/2(r+1)},& r > 1 
       \end{cases}
\]
\end{corollary}
\begin{proof}
First note that since $P_2$ has density with respect to Lebesgue measure bounded above by $A$, we get that by Assumption \ref{ass:bracket_entropy},
\[H_1^B(\epsilon, \mathcal{H}, P_2) \le C\epsilon^{-r}~,\]
where $C$ is some constant which depends only on $K$ and $A$. Next, since $\mathcal{H}$ consists of $\{0,1\}$-valued functions, any $\epsilon$-bracket for $\mathcal{H}$ in $L_1$ is an $\epsilon^{1/2}$-bracket in $L_2$ and vice versa. Hence we get that
\[H_2^B(\epsilon,\mathcal{H},P_2) \le K'\epsilon^{-2r}~,\]
for some constant $K'$ which depends only on $K$, $r$, and $A$. 
The result then follows immediately by Proposition \ref{prop:alexander_ineq}. 
\end{proof}

\begin{lemma}\label{lem:small_tail_ineq}
Maintain the assumptions of Proposition \ref{prop:upper_bound} with $r < 1$. Let $S_n$ be as in the proof of Proposition \ref{prop:upper_bound}. Then
\[\sup_{P \in \mathcal{P}} E[S_n] = O(1)~.\]
\end{lemma}
\begin{proof}
By definition, $S_n \le \sqrt{n}S_n^{(1)} + S_n^{(2)}$, where
\[S_n^{(1)} = \sup_{G \in \mathcal{G}: ||f_G|| \le n^{-1/(2+2r)}}\left|\frac{1}{n}\sum_{i=1}^n(\tilde{f}_G(Z_i) - E[\tilde{f}_G(Z_i)])\right|~,\]
\[S_n^{(2)} = \sup_{G \in \mathcal{G}: ||f_G|| \ge n^{-1/(2+2r)}}\sqrt{n}\frac{\left|\frac{1}{n}\sum_{i=1}^n(\tilde{f}_G(Z_i) - E[\tilde{f}_G(Z_i)])\right|}{||\tilde{f}_G||^{1-r}}~,\]
with $\tilde{f}_G = \bar{g}\cdot({\bf 1}\{X \in G^*\} - {\bf 1}\{X \in G\})$ and $||\cdot||$ the $L_2(P)$ norm, and we have used the fact that $||\tilde{f}_G||\le 1$. We will use Lemma 5.13 in \cite{geer2000} to bound each of these quantities.
To apply the lemma, let $g$ in her notation be $\tilde{f}_G$ in ours, and $g_0$ in her notation be zero. Set $\alpha = 2r$, $\beta = 0$ in the statement of her lemma. It remains to verify condition (5.40) in her lemma for the class $\mathcal{\widetilde{F}} = \{\tilde{f}_G: G \in \mathcal{G}\}$, but this follows by Assumption \ref{ass:bracket_entropy} by combining the arguments from the proof of Corollary \ref{cor:tail_ineq} and the proof of Proposition \ref{prop:alexander_ineq}. By inequality (5.42) in her lemma:
\[\sup_{P \in \mathcal{P}}n^{1/(1+r)}E[S_n^{(1)}] = O(1)~,\]
and hence since $r < 1$,
\[\sup_{P \in \mathcal{P}}\sqrt{n}E[S_n^{(1)}] = O(1)~.\]
By inequality (5.43),
\[\sup_{P \in \mathcal{P}}E[S_n^{(2)}] = O(1)~.\]
Combining both of these together gives our desired result.
\end{proof}

\begin{lemma}\label{lemma:KL_calculate} Let $P_f$ be specified as in the proof of Proposition \ref{prop:lower_bound}. Then for $f, g \in \mathcal{H}$ such that $f \ne g$:
$$\mathcal{K}(P_f, P_g) \le ||f - g||_1~,$$
where $\mathcal{K}(\cdot, \cdot)$ is the Kullback-Leibler divergence. 
\end{lemma}
\begin{proof}
Let $Q_{f,x}(\cdot)$ denote the probability mass function of $(Y(1),D)|X=x$ under $P_f$ (recall that $Y(0)|X=x$ is degenerate, so we omit it from the calculation). If $f \ne g$, a direct calculation shows that:
$$\mathcal{K}(Q_{f,x},Q_{g,x}) = \frac{1}{2}\log(3)~.$$
Hence
$$\mathcal{K}(P_f, P_g) = \int_{\mathcal{X}}\mathcal{K}(Q_{f,x},Q_{g,x}){\bf 1}\{f(x) \ne g(x)\}d\mu = \frac{1}{2}\log{3}||f - g||_1 \le ||f-g||_1~.$$
\end{proof}

\begin{lemma}\label{lemma:bennet}(Bennet's Inequality: see Theorem 2.9 in \cite{boucheron2013})
Let $\{Z_i\}_{i=1}^n$ be a sequence of independent random vectors with distribution $P$. Let $f$ be some function taking values in $[0,1]$ and define 
\[\nu_n(f) := \frac{1}{\sqrt{n}}\sum_{i=1}^n[f(Z_i) - Ef(Z_i)]~.\]
Then for any $\xi \ge 0$,
\[P^n(|\nu_n(f)| > \xi) \le 2\exp(-\psi_1(\xi, n, \alpha))~,\]
where $\alpha = \text{var}(\nu_n(f))$ and 
\[\psi_1(\xi, n, \alpha) = \xi\sqrt{n}h\left(\frac{\xi}{\sqrt{n}\alpha}\right)~,\]
with
\[h(x) = (1 + x^{-1})\log(1 + x) - 1~.\]
Importantly, $\psi$ has the following two relevant properties:
\[\psi_1(\xi,n,\alpha) \ge \psi_1(C\xi,n,\rho\alpha)\ge C^2\rho^{-1}\psi_1(\xi,n,\alpha)~,\]
for $C \le 1$, $\rho \ge 1$, and
\[
  \psi_1(\xi,n,\alpha) \ge
  \begin{cases}
                                   \frac{\xi^2}{4\alpha} & \text{if $\xi < 4\sqrt{n}\alpha$} \\
                                   \frac{\xi}{2}\sqrt{n} & \text{if $\xi \ge 4\sqrt{n}\alpha $} \\
  \end{cases}
\]
\end{lemma}

\begin{lemma}\label{lemma:alexander_3.1}(Lemma 3.1 in \cite{alexander1984})
Let $H:(0,t] \rightarrow \mathbb{R}^{+}$ be a decreasing function, and let $0 < s < t$. Let $\delta_0 := t$, $\delta_{j+1} : = s \vee \sup\{x \le \delta_j/2: H(x) \ge 4H(\delta_j)\}$ for $j \ge 0$, and $N := \min\{j:\delta_j = s\}$. Then
\[\sum_{j=0}^N\delta_jH(\delta_j)^{1/2} \le 8\int_{s/4}^tH(u)^{1/2}du~.\]
\end{lemma}

\end{small}
\clearpage
\nocitesupplement{*}
\bibliographystylesupplement{te}
\bibliographysupplement{supp_references.bib}

\end{document}